\documentclass[11pt]{article}
\usepackage[all]{xy}
\usepackage{cite}
\usepackage{graphicx}
\graphicspath{ {images/} }
\usepackage{amssymb,latexsym,amsmath,amsthm, mathrsfs}
\usepackage{algorithm}
\usepackage[noend]{algpseudocode}
\usepackage{fullpage}
\usepackage{indentfirst}
\setlength{\parindent}{2em}
\usepackage{setspace}
\usepackage{enumitem}
\usepackage{mathtools}
\usepackage{esvect}
\usepackage[font=itshape]{quoting}

\newcommand{\F}{\mathbb F}
\newcommand{\R}{\mathbb R}
\newcommand{\D}{\mathbb D}

\newcommand{\N}{\mathbb N}
\newcommand{\I}{\mathbb I}

\def\low{{\text{\upshape low}}}
\def\up{{\text{\upshape up}}}
\def\conv{{\text{\upshape conv}}}
\def\dgm{{\text{\upshape dgm}}}
\def\INT{{\text{\upshape INT}}}
\def\dINT{\mathrm{d_{INT}}}
\def\Var{{\text{\upshape Var}}}
\def\ord{{\text{\upshape ord}}}

\def\argmin{{\text{\upshape argmin}}}
\def\Dow{{\text{\upshape Dow}}}
\def\DDow{\D{\text{\upshape ow}}}
\def\conv{{\text{\upshape conv}}}

\def\rep{{\text{\upshape rep}}}
\def\cone{{\text{\upshape cone}}}

\def\sp{{\text{\upshape span}}}

\def\interior{{\text{\upshape int}}}
\def\bd{{\text{\upshape bd}}}

\def\ord{{\text{\upshape ord}}}
\def\nerve{{\text{\upshape nerve}}}
\def\Cent{{\text{\upshape Cent}}}
\def\rank{{\text{\upshape rank}}}
\def\dist{{\text{\upshape dist}}}
\def\db{\mathrm{d_b}}
\def\di{\mathrm{d_i}}
\def\dH{\mathrm{d_H}}
\def\Unif{\mathrm{Unif}}
\def\PersistenceIntervals{{\text{\upshape PersistenceIntervals}}}
\def\ray{{\text{\upshape ray}}}

\def\RegPair{{(\mathcal{F},P_K)}} 

\def\od{\stackrel{\mathrm{def}}{=}}

\newtheorem{theorem}{Theorem}[section]
\newtheorem*{theorem*}{Theorem}
\newtheorem*{lemma*}{Lemma}
\newtheorem{lemma}[theorem]{Lemma}

\newtheorem{corollary}[theorem]{Corollary}

\theoremstyle{definition}\newtheorem{definition}[theorem]{Definition}
\theoremstyle{definition}\newtheorem{example}[theorem]{Example}
\theoremstyle{definition}
\theoremstyle{definition}

\title{A Topological Approach to Inferring the Intrinsic Dimension of Convex Sensing Data}
\author{Min-Chun Wu \and Vladimir Itskov}
\begin{document}
\maketitle

\begin{abstract}
We consider a common measurement paradigm, where an unknown subset of an affine space is measured by unknown continuous quasi-convex functions. Given the measurement data, can one determine the dimension of this space?  In this paper,  we develop a method for inferring the intrinsic  dimension of the data from measurements by quasi-convex functions, under  natural generic  assumptions. 

The dimension inference problem depends only on  discrete data of the  ordering of the measured points of space, induced  by the sensor functions.  We introduce a construction of a filtration of Dowker complexes, associated to measurements by quasi-convex functions. Topological features of these complexes are then used to infer the intrinsic dimension. We prove convergence theorems that guarantee obtaining the correct intrinsic dimension  in the limit of large data, under natural generic assumptions. We also illustrate the usability  of this method in simulations. 
\end{abstract}
\begin{spacing}{1.2}
\tableofcontents
\section{Introduction}
\label{sec:intro}
Data in many scientific applications  are often obtained by ``sensing''  the phase space via sensors/functions that are convex.  Convex sensing is a class of problems of inferring the geometry of   data that are sampled via such functions. To be precise, let us recall the following

\begin{definition}\label{def:quasi-convex}
Let $K\subseteq\R^d$ be open convex. A function $f:K\to\R$ is {\bf quasi-convex}  if each sublevel set $f^{-1}(-\infty,\ell)=\{ x \in K \, \vert\,  f(x)<\ell \}$ is convex or empty, for all $\ell\in\R$. 
\end{definition}

The following is perhaps the shortest, albeit naive and incomplete, formulation of a convex sensing problem.  A collection of $n$ points $X=\{x_a\}_{a=1}^n$ in an open convex region $K\subset \mathbb R^d$ is sensed by measuring the values of $m$ {\it sensors}, i.e.  quasi-convex functions $\mathcal{F}=\{f_i:K\to\R\}_{i=1}^m$ . Suppose that one has  access \emph{only} to  the $m\times n$ data matrix $M = [M_{ia}]$ of sensor values, where
\begin{equation}\label{eq:sampling}  M_{ia} = f_i(x_a),\end{equation} 
 but does not have direct access to the  information about the dimension $d$ of the underlying space, the open convex region $K$, the points $x_a\in K$, or any further details of  the quasi-convex functions $f_i$. 
Can one recover any geometric information about the sampled region $K$? At the very minimum, can one infer the dimension $d$?


\subsection{Motivation from neuroscience}\label{subsection:neuro} 
While the convex sensing problems may be not uncommon in many scientific applications,  our chief motivation comes from neuroscience.
Neurons in the brain regions that represent sensory information often possess \emph{receptive fields}.   A paradigmatic example of a receptive field is that of a hippocampal {\it place cell}  \cite{OKeefe1971}. Place cells are  a class of neurons in rodent hippocampus that act as position sensors.   Here the  relevant stimulus space $K\subset \mathbb{R}^d$ is the animal's physical   environment  \cite{Yartsev367}, with $d\in \{1,2,3\}$, and $x\in K $ is the animal's location in this space.  Each neuron is activated with a certain probability that is a continuous function $f\colon K\to \mathbb R_{\geq0}$  of  the animal's position in space. In other words, the probability of a single neuron's activation at a time $t$  is given by  $p(t)=f(x(t))$, where $x(t)$ is the animal's position.  For each neuron, the function $f$ is called its {\it place field}, and is approximately quasi-concave\footnote{A function $f(x)$ is quasi-concave if its negative, $-f(x)$, is quasi-convex.} (see examples of place fields in Figure \ref{fig:PlaceCells}). Place fields can be easily computed when both the neuronal activity data and the relevant stimulus space are available.  A number of  other classes of sensory neurons in the brain also possess quasi-concave {\it receptive fields}, that is, each such neuron  responds with respect to a quasi-concave probability density function $f\colon K\to \mathbb R_{\geq0}$ on the stimulus space. \\

\begin{figure}
  \centering
  \includegraphics[width=0.60\linewidth]{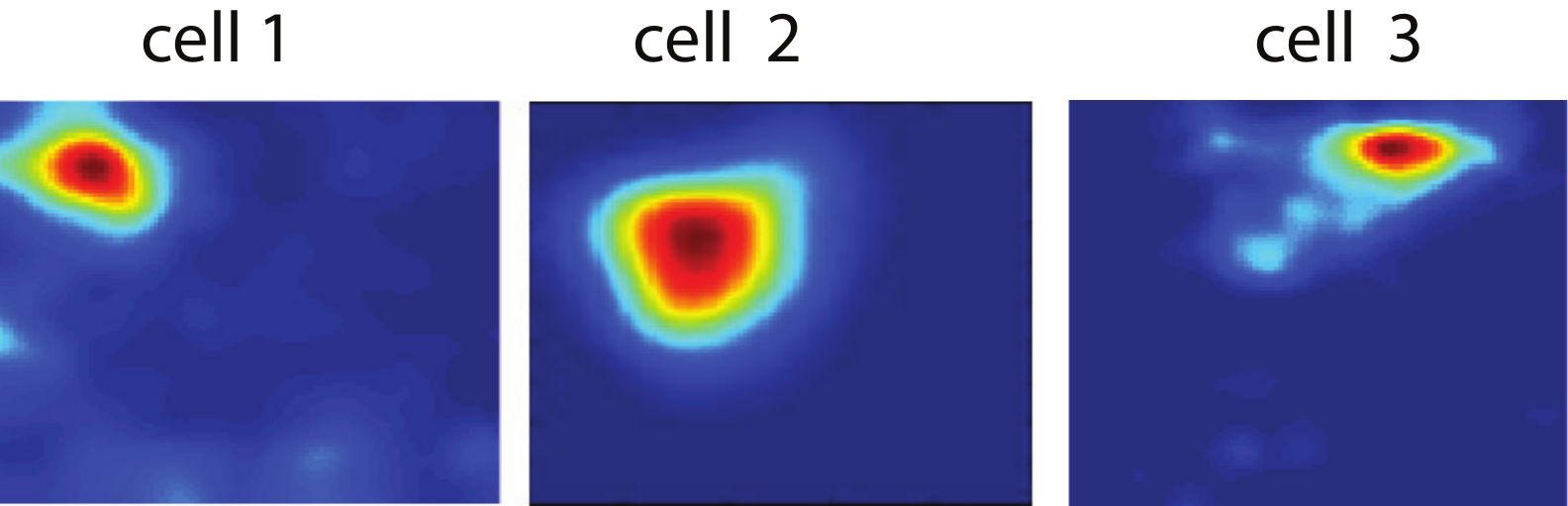}
  \caption{The activities of three different experimentally recorded place cells in a rat's hippocampus. The color represents the probability of each neuron's firing as a function of the animal's location.}	
  \label{fig:PlaceCells}
\end{figure} 

In many situations, the relevant stimulus space for  a given neural population may be unknown.
This raises a natural question: can one infer the dimension of the stimulus space with quasi-concave  receptive fields from neural activity alone? More precisely, given the neural activity of $m$ neurons with quasi-concave receptive fields $f_i: K \to \mathbb R  $,  can one  ``sense'' the stimulus space by  sampling the neural activity at $n$ moments of time as  $M_{ia} =f_i(x(t_a))$?  Here one has access to the measurements $M_{ia}$, but not the objects on the right-hand-side.  This motivates the naive formulation of the convex sensing problem above.

\subsection{The geometry of convex sensing data}
The convex sensing problem possesses a natural transformation group.    If  $\phi \colon \mathbb R \to \mathbb R $ is a strictly monotone-increasing  function,\footnote{A function $\phi:\mathbb R \to \mathbb R$ is strictly monotone-increasing if $\phi(y)>\phi(x)$ whenever $y>x$.}  then the sub-level sets of  the composition $\phi   \circ f $ and $f $ are identical up to an order-preserving re-labeling. 
Thus, if $\phi$ is a strictly monotone-increasing function, then $f$ is quasi-convex if and only if  $\phi \circ f $ is quasi-convex. 
It is easy to show that two sets of real numbers have the same ordering, that is,   $a_1<a_2<\cdots< a_n$ and $b_1<b_2<\cdots< b_n$ if and only if there exists a strictly monotone function   $\phi \colon \mathbb R \to \mathbb R $, such that $b_i=\phi(a_i)$ for all $i$.  It thus follows that it is only the total order of each row in the matrix $M$ in equation \eqref{eq:sampling}  that constrain the geometric features of the point cloud $X_n = \{x_1, ..., x_n\}$ in a convex sensing problem. This motivates the following definition. \\

\begin{definition}
Let $V$ be a finite set.   A {\bf sequence} of length $k$ in $V$ is a $k$-tuple $s = (v_1, ..., v_k)$ of elements in $V$ {\it without repetitions}. We  denote by $S_k[V]$ the set of all sequences of length $k$ on $V$. 
\end{definition}
\medskip

If $M$ is an $m\times n$ real matrix  that has distinct entries in  each row, then  each row yields a sequence of length $n$.  For the sake of an example, consider a real-valued matrix 
\[
M = \begin{bmatrix}
8.23 & 4.19 & 2.56 & 3.96\\
4.78 & 2.88 & 5.76 & 13.43
\end{bmatrix}.
\]
Since the first row has the ordering $2.56<3.96<4.19<8.23$, the total order $<_1$ on $ V= \{1, 2, 3, 4\}$   is  
$ 
3<_1 4<_1 2<_1 1 
 $.
Thus, the order sequence for the first row is $s_1 = (3,4,2,1)\in S_4[V]$. Similarly, the order sequence for the second row is $s_2 = (2,1,3,4)$. \\

It is easy to see that if  the sampled  points $X_n$ and the quasi-convex functions $\{f_i\}_{i\in [m]}$ are  generic  in some natural sense\footnote{It will be rigorously defined in Section \ref{sec:generic:problem}.}, then  each row of the data matrix $M_{ia}  = f_i(x_a)$ has no repeated values with probability $1$. We denote the set  of  all ``generic'' data matrices as
\[
\mathcal{M}_{m,n}^o\od \{m\times n\text{ real-valued matrices with no repeated entries in each row}\}.
\] 
For  any such   matrix $M = [M_{ia}]\in\mathcal{M}_{m,n}^o$, one can define a collection $S(M)$ of $m$ maximal-length sequences as  $S(M)=\{s_1,\dots, s_m\}$, where each sequence 
$$ s_i = (a_{i1}, ..., a_{in})\in{ S_n[n] }\footnote{\text{The accurate notation is $S_n[[n]]$, but here we use the less cumbersome notation $S_n[n]$.}} $$ is obtained from the total order of the $i$-th row: 
 $$M_{ia_{i1}}< M_{ia_{i2}}<...< M_{ia_{in}}.$$

\bigskip 

The geometry of a convex sensing problem for a data matrix $M\in \mathcal{M}_{m,n}^o$  is  constrained only by the set of $m$ sequences $S(M) \subset { S_n[n] } $. The following observation makes it possible to re-state any convex sensing problem purely in terms of embedding a set of points that satisfy certain convex   hull non-containment conditions.   Let  $\conv(x_1, ..., x_k)$  denote the convex hull of a collection of points $x_1, ..., x_k$ in $\R^d$. 

\begin{lemma}\label{lem:convex_hull_noncontainment}
For  
any collection of $n$ distinct points
$  \{x_1,x_2,\dots, x_n\}\subset \R^d$, the following statements are equivalent:
\begin{itemize}
\item[(i)] There exists a continuous quasi-convex function $f:\R^d\to\R $     such that  
\begin{equation}\label{eq:ordering-function}  f (x_{1})<f (x_{2})<\cdots<f (x_{ n}), \end{equation} 
\item[(ii)] For each   $k = 2, ..., n$, \quad  $x_{ k }\notin\conv(x_{ 1 }, ..., x_{ k-1 })$.
\end{itemize}
\end{lemma}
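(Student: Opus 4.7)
\emph{(i) $\Rightarrow$ (ii).} This direction is direct from the definition of quasi-convexity. I would fix $k\in\{2,\dots,n\}$ and consider the sublevel set
$$ L_k \;=\; f^{-1}\bigl(-\infty,\,f(x_k)\bigr),$$
which is convex by Definition~\ref{def:quasi-convex}. Since $f(x_j)<f(x_k)$ for every $j<k$, one has $\{x_1,\dots,x_{k-1}\}\subseteq L_k$, hence $\conv(x_1,\dots,x_{k-1})\subseteq L_k$. As $f(x_k)\not<f(x_k)$, the point $x_k\notin L_k$, so $x_k\notin\conv(x_1,\dots,x_{k-1})$.

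\emph{(ii) $\Rightarrow$ (i).} This is the substantive direction, and I would handle it constructively. Write $C_k=\conv(x_1,\dots,x_k)$, a closed convex set. The key observation is that condition (ii), together with the monotonicity $C_k\subseteq\conv(x_1,\dots,x_{j-1})$ whenever $j>k$, forces $x_j\notin C_k$ for all $j>k$. Since each $C_k$ is closed, this gives $\dist(x_j,C_k)>0$ for $j>k$. I would then define
$$ f(x) \;=\; \sum_{k=1}^{n-1} \alpha_k\,\dist(x, C_k), $$
with positive weights $\alpha_k$ to be chosen. Each summand is continuous and convex (the distance to a convex set is a standard example of a convex function), so $f$ is continuous and convex, hence quasi-convex.

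\emph{Choosing the weights.} Since $\dist(x_j,C_k)=0$ whenever $j\le k$, the value at a sample simplifies to $f(x_j)=\sum_{k=1}^{j-1}\alpha_k\,\dist(x_j,C_k)$. In particular $f(x_1)=0$ and $f(x_2)=\alpha_1\|x_2-x_1\|>0$ for any $\alpha_1>0$. For the inductive step the difference is
$$ f(x_{j+1})-f(x_j) \;=\; \alpha_j\,\dist(x_{j+1},C_j) \;+\; \sum_{k=1}^{j-1}\alpha_k\bigl[\dist(x_{j+1},C_k)-\dist(x_j,C_k)\bigr]. $$
Once $\alpha_1,\dots,\alpha_{j-1}$ are fixed, the bracketed sum is a fixed real number, while $\dist(x_{j+1},C_j)>0$. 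Hence choosing $\alpha_j$ large enough makes the right-hand side strictly positive, and an induction on $j$ produces weights realising $f(x_1)<f(x_2)<\cdots<f(x_n)$.

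\emph{Main obstacle and remarks.} The only subtlety is the weight-choice induction: one must ensure that adding the $j$-th distance function increases $f(x_{j+1})$ without upsetting the previously established strict inequalities, but this follows automatically because only $f(x_j)$ for $j' \ge j+1$ depend on $\alpha_j$ (as $\dist(x_{j'},C_j)=0$ for $j'\le j$), so earlier inequalities are preserved. Everything else---continuity, convexity, and hence quasi-convexity of $f$---is automatic from the standard properties of distance functions to convex sets.
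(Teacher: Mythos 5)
Your proof is correct and follows essentially the same route as the paper: both directions match, and for (ii) $\Rightarrow$ (i) the paper uses exactly the same positive combination $\sum_k h_k\,\dist(x,C_k)$ of distances to the nested convex hulls, merely writing out an explicit formula for the weights where you argue inductively that a sufficiently large $\alpha_j$ works. The two arguments are interchangeable.
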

\begin{proof}
The implication  $(i)\!\!\implies\!\! (ii)$ follows from Definition \ref{def:quasi-convex}. To prove that $(ii)\!\!\implies\!\! (i)$,  denote $C_k = \conv(x_{ 1 }, ..., x_{ k })$, $d_k(x)\od\dist(x,C_k)$ for any $k=1,\dots n$, and define 
$f (x) = \sum_{k=1}^n h_k\cdot d_k(x)$,
 where $h_1 =  1$ and 
\[
h_k\od 1+
\frac{1}{d_k(x_{ k+1 })} \max\left\{
\sum_{j = 1}^{k-1} h_j\left(d_j\left(x_{ k }\right)-d_j\left(x_{ k+1 }\right)\right), 0
\right\}  , 
 \text{ for } k\geq 2. 
\]
Note that (ii) implies that $d_k(x_{ k+1 })>0$ for all $k\geq 2$. 
Recall that, for any convex set $C\subset \R^d$, the function $x\mapsto \dist(x,C) 
$  is continuous and convex\footnote{See, e.g., Example 3.16 in \cite{Boyd04convexoptimization}.}. Thus,  since  $h_k $ are positive,    $f(x)$ is a continuous convex\footnote{See, e.g., Section 3.2.1 in \cite{Boyd04convexoptimization}.} (and thus quasi-convex)  function. Moreover, $ f(x_{ 1 }) =0 < \dist(x_{ 2 },x_{ 1 })= f (x_{ 2 }) $,  and
\[
h_k>\frac{1}{d_k(x_{ k+1 })}    \sum_{j = 1}^{k-1} h_j\left(d_j\left(x_{ k }\right)-d_j\left(x_{ k+1 }\right)\right)   \text{ for } k\geq2.
\]
The last inequality is equivalent to  $f(x_{ k+1 })>f(x_{ k })$.  Thus inequalities \eqref{eq:ordering-function} hold.
\end{proof}
\begin{corollary}\label{lem_convex:hull:conditions} 
A  matrix $M=[M_{ia}]\in \mathcal{M}_{m,n}^o$  can be obtained as  $M_{ia} =f_i(x_a)$ from a collection  of $m$ continuous quasi-convex functions $  f_i:\mathbb R^d \to\R $  and   $n$ points $ x_1, \dots , x_n\in  \mathbb R^d$ if and only if there exist  $n$ points 
  $ x_1, \dots , x_n\in  \mathbb R^d$ such that, for each sequence  $s=(a_1, a_2,\dots , a_n)\in S(M)$ and each   $k = 2, ..., n$,    $x_{ a_k }\notin\conv(x_{ a_1 }, ..., x_{ a_{k-1} })$.
\end{corollary}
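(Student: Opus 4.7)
The plan is to apply Lemma \ref{lem:convex_hull_noncontainment} row-by-row. Each row of $M$ induces a sequence $s_i \in S(M)$, and the lemma translates the existence of a quasi-convex function respecting a given ordering into a convex-hull non-containment condition. Since the points $x_1, \dots, x_n$ are shared across all rows, the corollary should reduce to applying the lemma for each $i \in [m]$ separately, with a small extra rescaling step to pin down the numerical values $M_{ia}$.

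For the forward direction ($\Rightarrow$), I would fix $i$ and unpack the definition of $S(M)$: the sequence $s_i = (a_{i1}, \ldots, a_{in})$ is precisely the one for which $f_i(x_{a_{i1}}) < \cdots < f_i(x_{a_{in}})$. Applying the implication $(i) \Rightarrow (ii)$ of Lemma \ref{lem:convex_hull_noncontainment} then gives $x_{a_{ik}} \notin \conv(x_{a_{i1}}, \ldots, x_{a_{i,k-1}})$ for every $k \geq 2$. Repeating this for every row yields the condition for every $s \in S(M)$, all with respect to the same underlying points.

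For the reverse direction ($\Leftarrow$), I would apply $(ii) \Rightarrow (i)$ of the lemma to each sequence $s_i$ separately, producing continuous quasi-convex functions $g_i : \R^d \to \R$ with $g_i(x_{a_{i1}}) < \cdots < g_i(x_{a_{in}})$. This gives the correct \emph{ordering} on each row but not the prescribed numerical values $M_{ia}$. To fix this, I would post-compose each $g_i$ with a strictly monotone-increasing continuous function $\phi_i : \R \to \R$ chosen so that $\phi_i(g_i(x_{a_{ik}})) = M_{i, a_{ik}}$ for all $k$; since both $(g_i(x_{a_{ik}}))_k$ and $(M_{i, a_{ik}})_k$ are strictly increasing, such a $\phi_i$ exists (e.g.\ piecewise-linear interpolation with affine extensions on the two tails). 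Setting $f_i \od \phi_i \circ g_i$ yields a continuous quasi-convex function (quasi-convexity is preserved under strictly monotone post-composition, as noted earlier in the paper) with $f_i(x_a) = M_{ia}$ for every $a$.

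The argument is essentially bookkeeping on top of Lemma \ref{lem:convex_hull_noncontainment}, so I do not anticipate a serious obstacle. The only mildly non-trivial point is the construction of $\phi_i$, but this is a one-line interpolation argument. I would also briefly remark that the global domain $\R^d$ required in the corollary matches the one produced by the construction in the lemma, so no domain-restriction issue arises.
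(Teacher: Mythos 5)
Your proof is correct and matches the route the paper implicitly intends (the corollary is stated without proof as an immediate consequence of Lemma \ref{lem:convex_hull_noncontainment}). The one point worth highlighting is exactly the one you caught: the lemma only delivers the right \emph{ordering} on each row, so the post-composition with a strictly increasing continuous $\phi_i$ (piecewise-linear interpolation of the $n$ prescribed values, affine on the tails) is genuinely needed to hit the exact entries $M_{ia}$, and quasi-convexity survives this post-composition as the paper notes. You might also observe, for completeness, that since $M\in\mathcal{M}_{m,n}^o$ has no repeated entries in any row, the points $x_1,\dots,x_n$ must be pairwise distinct in both directions of the argument, which is the hypothesis under which Lemma \ref{lem:convex_hull_noncontainment} is stated; otherwise the application of the lemma is exactly the row-by-row bookkeeping you describe.
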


\bigskip An  important implication of Corollary \ref{lem_convex:hull:conditions} is that a convex sensing problem without any further constraint always has a two-dimensional solution.\footnote{By choosing $d=2$ in Corollary \ref{lem_convex:hull:conditions}, we obtain a two-dimensional solution.} Recall that a set of points is \emph{convexly independent} if none of these points lies in the convex hull of the others. 

\begin{corollary}\label{lem_cautionary}
For every matrix $M\in\mathcal{M}_{m,n}^o$ and convexly independent points $x_1,x_2,\dots, x_n \in \R^d$,  
there exist $m$ continuous  quasi-convex functions $f_i:\R^d\to\R $ such that $M_{ia} = f_i(x_a)$.
\end{corollary}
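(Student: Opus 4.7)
The plan is to combine Lemma \ref{lem:convex_hull_noncontainment} with a post-composition by a strictly monotone rescaling. Lemma \ref{lem:convex_hull_noncontainment} already produces, for any prescribed ordering of the sample points satisfying the convex-hull non-containment condition, a continuous quasi-convex function realizing that ordering; what remains is to bend the output values to match the prescribed matrix entries $M_{ia}$.

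First I would extract, for each $i=1,\dots,m$, the order sequence $s_i=(a_{i1},\dots,a_{in})$ of the $i$-th row, so that $M_{i,a_{i1}}<\cdots<M_{i,a_{in}}$. Convex independence of $\{x_1,\dots,x_n\}$ implies, for every $k\geq 2$, that $x_{a_{ik}}\notin\conv(x_{a_{i1}},\dots,x_{a_{i,k-1}})$, because a point of a convexly independent set cannot lie in the convex hull of any subset of the remaining points. Lemma \ref{lem:convex_hull_noncontainment} then produces a continuous quasi-convex function $g_i\colon\R^d\to\R$ with $g_i(x_{a_{i1}})<\cdots<g_i(x_{a_{in}})$.

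Next I would rescale $g_i$ to hit the exact entries of $M$. Writing $y_k\od g_i(x_{a_{ik}})$ and $z_k\od M_{i,a_{ik}}$, both sequences are strictly increasing, so one can construct a continuous strictly monotone-increasing map $\phi_i\colon\R\to\R$ with $\phi_i(y_k)=z_k$ for $k=1,\dots,n$ (for instance, the piecewise-linear interpolant through the points $(y_k,z_k)$, extended by positive-slope half-lines on the two unbounded tails). Setting $f_i\od\phi_i\circ g_i$ gives a continuous function whose sublevel set $\{f_i<\ell\}$ equals $\{g_i<\phi_i^{-1}(\ell)\}$ for $\ell$ in the range of $\phi_i$, with a similar half-line identification outside the range; in either case the sublevel set is convex. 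Hence $f_i$ is continuous and quasi-convex, and $f_i(x_a)=M_{ia}$ by construction.

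There is no substantial obstacle: the geometric content is absorbed entirely by Lemma \ref{lem:convex_hull_noncontainment}, and the only additional ingredient is the elementary fact, already noted at the start of the section, that composition with a continuous strictly monotone-increasing map preserves continuity and quasi-convexity while allowing prescribed values at a finite strictly ordered set of points.
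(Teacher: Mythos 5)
Your proof is correct and matches the approach the paper intends, namely combining Lemma \ref{lem:convex_hull_noncontainment} with a monotone reparametrization of the output values (the observation the paper makes just before Lemma \ref{lem:convex_hull_noncontainment} that an ordering can be matched to any prescribed strictly increasing values by a strictly monotone $\phi$). The paper states Corollary \ref{lem_cautionary} without an explicit proof, implicitly relying on the fact that convex independence makes the non-containment condition of Lemma \ref{lem:convex_hull_noncontainment} hold for \emph{every} ordering; you spell out that step, and you also correctly insist that $\phi_i$ be chosen \emph{continuous} (piecewise-linear with positive-slope tails), which is needed for $f_i = \phi_i \circ g_i$ to be continuous and is not explicitly flagged in the paper's preliminary remark about monotone $\phi$. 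No gaps.
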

 
\bigskip 
Note that the situation where all the sampled points   are convexly independent is  \emph{non-generic}, for large $n$.   If one  explicitly excludes this situation, then the combinatorics of $S(M)$ constrains the minimal possible dimension $d$ of the geometric realization, as  illustrated by the following example. 
\begin{example}\label{ex_dim_obst}
Let  $n>2$,  and $M\in\mathcal{M}_{n-1,n}^o$ be a matrix  obtained as in equation  \eqref{eq:sampling} with continuous quasi-convex functions $f_i$, whose $(n-1)$ sequences $S(M)=\{s_1,s_2,\dots, s_{n-1}\} $ are of the form 
\begin{align} \label{obstruction:example} 
s_i  =   (\cdots, n, i),  \quad \text{for all } i\in [n-1],
\end{align}
where each of the ``$\cdots$" in $s_i$  is  an arbitrary permutation of $[n]\setminus\{n, i\}$.
Assume that  at least one point in $ X_n = \{x_1, ..., x_n\}\subset \mathbb R^d$ is contained in the interior of the convex hull $\conv(X_n)$, then the dimension in which $M$ can be obtained as in Corollary \ref{lem_convex:hull:conditions} is $d= n-2$. The proof is given in Section \ref{subsection:dim:obstruction} of the Appendix.
\end{example}

\subsection{Dimension inference in a convex sensing problem}
\label{sec:generic:problem}
 It is clear from Corollary  \ref{lem_cautionary} and Example \ref{ex_dim_obst}, that the problem of dimension inference  is well-posed \emph{only} in the presence of some genericity assumptions that guarantee convex dependence of the sampled points. Instead of making such an assumption  explicit,   we take a probabilistic perspective, wherein points are drawn from a probability distribution that is \emph{generic} is some natural sense. We assume that there are three   objects (which are unknown)  that underly any ``convex sensing'' data:
\begin{itemize}
\item[(i)] an open convex set $K\subseteq\R^d$, 
\item[(ii)] $m$ quasi-convex continuous  functions $\mathcal{F}=\{f_i:K\to\R\}_{i=1}^m$, and 
\item[(iii)] a probability measure $P_K$ on $K$. 
\end{itemize}
In relation to the neuroscience motivation in Section \ref{subsection:neuro}, $K\subseteq\R^d$ is the stimulus space, each  function  $f_i$ is  the negative of the receptive field of a neuron,  and $P_K$ is the  measure that describes the probability distribution  of the stimuli.  To guarantee that the convex sensing data  are {\it generic}, we  impose the following regularity assumptions.

\begin{definition}\label{defn:regpair} 
A {\bf regular pair} is a pair $(\mathcal{F}, P_K)$ that satisfies  the  conditions (i)-(iii) above as well as the  following two conditions:
\begin{enumerate}
\item[(R1)] The probability measure $P_K$ is equivalent to the Lebesgue measure on $K$.
\item[(R2)] Level sets of all functions in $ \mathcal{F}$  are of  measure zero, i.e.  for every $i\in [m]$ and $\ell\in\R$, $P_K(f_i^{-1}(\ell))=0$.
\end{enumerate}
\end{definition}

\begin{definition} A point cloud $\{x_1, ..., x_n\}\subset K$ is {\bf sampled from} a regular pair $(\mathcal{F}, P_K)$ if it is i.i.d. from $P_K$. A matrix $M=[M_{ia}]\in \mathcal{M}_{m,n}^o$ is {\bf sampled from} a regular pair $(\mathcal{F}, P_K)$, if for all  $i\in [m]$, and $a\in [n]$, 
$M_{ia} =f_i(x_a),  
$ 
where $\{x_1, ..., x_n\}\subset K $ is sampled from $(\mathcal{F}, P_K)$. 
\end{definition}
 
 \bigskip 
The assumption (R1) ensures  that the  domain $K$ is well-sampled, and thus the probability that  the   points $ x_1, ..., x_n$  are convexly independent approaches zero in the limit of large $n$. The assumption (R2) guarantees, with probability $1$,  that the  data matrix $M$   has no repeated values in each row, and thus is  in $ \mathcal{M}_{m,n}^o$.

\bigskip 

\newcommand{\classofRP}{{\mathcal RP}}

In this paper, we develop a method for estimating the  dimension of  convex sensing data. 
Intuitively, such an estimator needs to be \emph{consistent}, i.e.  ``behave well'' in the limit of large data.  
In addition to  the conditions imposed on a regular pair,  other properties of a pair $(\mathcal{F}, P_K)$ may be needed, depending on the context.   It is therefore natural to define a consistent dimension estimator in relation to a particular class of regular pairs.  Since an estimator may rely on different parameters for different regular pairs, we consider a one-parameter family of such estimators, motivating the following definition of consistency. 

\def \Aconst{asymptotically consistent }
\begin{definition} Let $\classofRP$ be a   class of regular pairs. For each regular pair $\RegPair\in \classofRP$ we denote by $d  \RegPair$ the dimension  $d$, where the open convex set $K\subseteq \mathbb R^d$ is embedded.
A one-parameter family of functions  $\hat d(\varepsilon) \colon \mathcal{M}_{m,n}^o\to \mathbb N$ is called  an {\bf \Aconst estimator in $\classofRP$}, if for every regular pair 
$\RegPair \in \classofRP$, there exists $l>0$, such  that for every $\varepsilon \in (0,l)$ and each  sequence of matrices $M_n  \in  \mathcal{M}_{m,n}^o $,   sampled from   $\RegPair$,  
\begin{equation} \lim_{n\to \infty } P\left (\hat d\left (\varepsilon\right)\left( M_n\right) = d  \RegPair  \right )  =1.
\end{equation} 
\end{definition}

\bigskip 
The structure of this paper is as follows. 
In Section 2, we define two multi-dimensional filtrations of simplicial complexes:   the  empirical Dowker complex $\Dow(S(M))$ that can be associated to a data matrix $M$,  and the Dowker complex 
$\DDow\RegPair$, that can be associated to  a regular pair $\RegPair$. Using an interleaving distance between {\it multi-filtered complexes}, we  prove (Theorem \ref{thm_inter_con})  that for a  sequence $\{ M_n\}$ of data matrices, sampled from a regular pair $\RegPair$, $\Dow(S(M_n))\to \DDow\RegPair$  in probability, as $n\to \infty$.\\

In Section 3, we develop tools for estimating the dimension of $\RegPair$ using  persistent homology.  We define a set of maximal persistent lengths associated to $\DDow\RegPair$ and prove (Lemma \ref{lem_lower_bound}) that a lower bound of the dimension of $\RegPair$ can be derived from these persistent lengths. Next we define another set of maximal persistence lengths from $\Dow(S(M_n))$ and prove (Theorem \ref{thm_asym_L_k}) that they converge  to the maximal persistence lengths associated to $\DDow\RegPair$ in probability, in the limit of large sampling of the data. The rest of Section 3 is devoted to two subsampling procedures for different practical situations,  as well as  simulation results that illustrate that the correct dimension can be inferred with these two methods. \\

In Section 4, we introduce  {\it complete} regular pairs  and prove (Theorem \ref{thm_suff_cond_0}) that the lower bound in Lemma \ref{lem_lower_bound} is  equal to the dimension $d  \RegPair$ for complete regular pairs. 
This establishes (Theorem \ref{thm:d_low_estimate_true_d})  that the dimension estimator introduced in Section \ref{subsection:3.3}  is  an asymptotically consistent estimator in the class of  complete regular pairs. In Section 5,  we define an estimator that can be used to test (Theorem \ref{thm_suff_cond_test}) whether the data matrix is sampled from a complete regular pair. The Appendix (Section \ref{section:Appendix}) contains the proofs of the main theorems as well as some technical supporting lemmas.


\section{Empirical Dowker complex and the interleaving convergence theorem}
In this section, we define the {\it empirical Dowker complex} from the $m$ sequences induced from the rows of the data matrix $M$ and the {\it Dowker complex} from the regular pair $\RegPair$ and prove that the empirical Dowker complex converges to the Dowker complex in probability. These complexes are both examples of {\it multi-filtered simplicial complexes}.
\begin{definition}
Let $I = \prod_{i\in [m]} I_i$ be an  $m$-orthotope  in $\R^m$, where each $I_i$ is an interval (open, closed, half-open, finite, or infinite are all allowed) in $\R$. Let $\leq$ be the natural partial order on $I$ induced from $\R^m$. A {\bf multi-filtered simplicial complex} $\mathcal{D}$ indexed over $I$ is a collection $\{\mathcal{D}_\alpha\}_{\alpha\in I}$ of simplicial complexes on a fixed finite vertex set, such that, $\mathcal{D}_\alpha\subseteq \mathcal{D}_\beta$, for all $\alpha\leq\beta$ in $I$.
\end{definition}

We define the empirical Dowker complexes from a collection of sequences of maximal length\footnote{i.e. of length $n$} on the vertex set $[n]$.
\begin{definition}\label{def_Dowker_emp}
Let $S=\{s_1, ..., s_m\}$ be a collection of sequences on $[n]$ of length $n$. Let $\leq_i$ be the total order on $[n]$ induced from $s_i$; namely, for $a,b\in [n]$, $a\leq_i b$ if and only if $a$ is before or equal to $b$ in $s_i$. We define the following multi-filtered simplicial complex, with vertex set $[m]$ and indexed over $[0,1]^m$:
\[
\Dow(S) \od \left\{\Dow\left(S\right)\left(t_1, ..., t_m\right):\left(t_1, ..., t_m\right)\in [0,1]^m\right\},
\]
where 
\[
\Dow(S)(t_1, ..., t_m) \od \Delta(\{\sigma_a:a=1, ..., n\}),
\]
and
\[
\sigma_a =\left \{i\in [m]: \#(\{b\in [n]:b\leq_i a\})\leq n  t_i\right\}. 
\]
Here $\Delta(\{ \sigma_a\}_{a\in [n]})$ denotes the smallest simplicial complex containing the faces  $\{ \sigma_a\}_{a\in [n]}$. This filtered complex is called the {\bf empirical Dowker complex} of  $S$.
\end{definition}

Recall from Section 1.2 that the relevant geometric information of the $m\times n$ data matrix $M\in\mathcal{M}_{m,n}^o$ is contained  in the collection of $m$ sequences $S(M) = \{s_1, ..., s_m\}$, where $s_i\subset S_n[n]$ is of length $n$ and  records the total order   induced by the $i$-th row of $M$. Therefore, we can consider the empirical Dowker complex $\Dow(S(M))$ derived from the data matrix $M$. \\

Note that our definition of empirical Dowker complex is a multi-parameter generalization of the Dowker complex defined in \cite{Chazal2014}. Specifically, the one-dimensional filtration of simplicial complex (indexed over $t$) $\Dow(S(M))(n\cdot t, ..., n\cdot t)$ is equal to the Dowker complex defined in \cite{Chazal2014}. \\

 Recall that, for a collection $\mathcal{A} = \{A_i\}_{i\in [m]}$ of sets, the {\bf nerve} of $\mathcal{A}$, denoted $\nerve(\mathcal{A})$, is the simplicial complex on the vertex set $[m]$ defined as  
\[
\nerve(\mathcal{A}) \od \left\{\sigma\subseteq [m]: \bigcap_{i\in\sigma} A_i\neq\varnothing\right\}.
\]
The following lemma is immediate from Definition \ref{def_Dowker_emp}. 
\begin{lemma}\label{lem_Dow(S)_nerve}
Let $S=\{s_1, ..., s_m\}$ be a collection of sequences on $[n]$ of length $n$. For each $i\in [m]$ and $t\in\R$, consider 
$$A^{(i)}(t) \od \left \{a\in [n]: \#\left (\left \{b\in [n]:b\leq_i a\right \}\right)\leq n t\right\}\subset [n],$$ 
where $\leq_i$ is the total order on $[n]$ induced by $s_i$. Then 
\[
\Dow(S)(t_1, ..., t_m) = \nerve\left (\left \{A^{(i)}(t_i)\right\}_{i\in [m]}\right).
\]
\end{lemma}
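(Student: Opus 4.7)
The plan is to unwind both sides of the asserted equality and observe that $\sigma_a$ (indexed by $a\in[n]$, a subset of $[m]$) and $A^{(i)}(t_i)$ (indexed by $i\in[m]$, a subset of $[n]$) are the two marginal slices of a single bipartite incidence relation. Concretely, define the relation $R\subseteq [m]\times [n]$ by
\[
(i,a)\in R \iff \#\bigl(\{b\in [n]:b\leq_i a\}\bigr)\leq n\,t_i.
\]
Then by inspection of the definitions, $i\in\sigma_a$ iff $(i,a)\in R$ iff $a\in A^{(i)}(t_i)$. Once this symmetry is written down, the lemma becomes a routine translation.

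Having established $i\in\sigma_a\Leftrightarrow a\in A^{(i)}(t_i)$, I would argue the set equality of the two simplicial complexes by checking that $\sigma\subseteq[m]$ is a face of each side under the same condition. A subset $\sigma$ lies in $\Delta(\{\sigma_a\}_{a\in[n]})$ precisely when $\sigma\subseteq\sigma_a$ for some $a\in[n]$ (this is the only mildly formal step: by definition the smallest simplicial complex containing the $\sigma_a$'s consists exactly of their subsets, using closure under taking faces). Translating through the bipartite equivalence, $\sigma\subseteq\sigma_a$ for some $a$ iff there exists $a\in [n]$ with $a\in A^{(i)}(t_i)$ for all $i\in\sigma$, which is the statement that $\bigcap_{i\in\sigma}A^{(i)}(t_i)\neq\varnothing$, i.e.\ $\sigma\in\nerve\bigl(\{A^{(i)}(t_i)\}_{i\in[m]}\bigr)$.

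There is no real obstacle; the lemma is essentially the standard Dowker-style duality between a complex built from the faces $\sigma_a$ and the nerve of the ``transpose'' sets $A^{(i)}(t_i)$, made possible because membership in both families is governed by one and the same inequality. The only thing to be careful about is the asymmetry in the parameters $t_i$, which I would handle by keeping the subscript $i$ on $t_i$ throughout and never collapsing the $m$ distinct thresholds. After the bipartite observation, the proof is two lines of if-and-only-if.
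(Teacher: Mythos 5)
Your proof is correct and fills in exactly what the paper leaves implicit: the paper asserts Lemma \ref{lem_Dow(S)_nerve} is ``immediate from Definition \ref{def_Dowker_emp}'' without proof, and your bipartite-relation observation ($i\in\sigma_a \Leftrightarrow a\in A^{(i)}(t_i)$, both encoded by the same inequality) followed by the two-line equivalence of face conditions is precisely the intended argument.
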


Next we connect the combinatorics of $\Dow(S(M))$  to the geometry. From Lemma \ref{lem_Dow(S)_nerve}, we know that $\Dow(S(M))$ is the nerve of $\{A^{(i)}(t_i)\}_{i\in [m]}$. To define an analogue of $\Dow(S(M))$ from   the regular pair $(\mathcal{F},P_K)$, we use the following  lemma (see the proof  in Section \ref{AppSec:ell_i(t)}) to define an analogue of $A^{(i)}(t)$ from $\RegPair$.

\begin{lemma}\label{lem:ell_t}
Let $f:K\to\R$ be a continuous  function with $P_K(f^{-1}(\ell)) = 0$ for all $\ell\in\R$, where $P_K$ is a probability measure on a convex open set $K$ and $P_K$ is equivalent to the Lebesgue measure on $K$. Then there exists a unique strictly increasing continuous function $\lambda:(0,1)\to\R$ such that, for all $t\in (0,1)$, 
\begin{equation}\label{eq:ell_t}
P_K(f^{-1}(-\infty,\lambda(t)) = t.
\end{equation}
\end{lemma}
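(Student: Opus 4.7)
The plan is to recognize $\lambda$ as the inverse of the cumulative distribution function of the push-forward measure $f_\ast P_K$ on $\R$. Set $F(\ell) \od P_K(f^{-1}(-\infty,\ell))$ and establish four properties of $F$, from which the existence, uniqueness, strict monotonicity, and continuity of $\lambda$ will all fall out.

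First I would verify the elementary properties of $F$. It is non-decreasing since $\{f<\ell_1\}\subseteq\{f<\ell_2\}$ whenever $\ell_1\le\ell_2$. It is left-continuous by continuity of measure applied to the increasing family $\{f<\ell'\}\uparrow\{f<\ell\}$ as $\ell'\uparrow\ell$; right-continuity at $\ell$ comes from $\{f<\ell'\}\downarrow\{f\le\ell\}$ as $\ell'\downarrow\ell$, which gives $F(\ell^+)=P_K(\{f\le\ell\})=F(\ell)+P_K(f^{-1}(\ell))=F(\ell)$ by hypothesis (R2). Finally, $F(\ell)\to 0$ as $\ell\to-\infty$ and $F(\ell)\to 1$ as $\ell\to\infty$ follow from continuity of measure applied to $\{f<\ell\}\downarrow\varnothing$ and $\{f<\ell\}\uparrow K$ respectively.

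The key step is to rule out plateaus of $F$ in the ``active'' region where $F$ takes values in $(0,1)$. Suppose toward contradiction that $\ell_1<\ell_2$ but $F(\ell_1)=F(\ell_2)=t\in(0,1)$. Then $P_K(f^{-1}[\ell_1,\ell_2))=0$, and since $P_K$ is equivalent to Lebesgue measure by (R1), the Lebesgue measure of $f^{-1}[\ell_1,\ell_2)$ vanishes as well. On the other hand, $0<t<1$ implies the existence of $x_1,x_2\in K$ with $f(x_1)<\ell_1$ and $f(x_2)\ge\ell_2$. Because $K$ is convex, the whole segment $[x_1,x_2]$ lies in $K$, and the intermediate value theorem applied to $f$ restricted to this segment produces a point $x\in K$ with $f(x)\in(\ell_1,\ell_2)$. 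Since $K$ is open and $f$ is continuous, some open ball $B(x,r)\subset K$ satisfies $f(B(x,r))\subset(\ell_1,\ell_2)$, and such a ball has positive Lebesgue measure—contradiction. This shows $F$ is strictly increasing on $\{\ell:F(\ell)\in(0,1)\}$.

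Given these four properties, for each $t\in(0,1)$ the intermediate value theorem (using continuity of $F$ and the limits $0,1$ at $\mp\infty$) produces some $\ell$ with $F(\ell)=t$, and strict monotonicity of $F$ in the active region forces this $\ell$ to be unique; call it $\lambda(t)$. Strict monotonicity of $\lambda$ is immediate from $F(\lambda(t_1))=t_1<t_2=F(\lambda(t_2))$ combined with monotonicity of $F$, and continuity of $\lambda$ follows because $F$ is a continuous, strictly increasing bijection between the interval $\{\ell:F(\ell)\in(0,1)\}$ and $(0,1)$, whose inverse is then automatically continuous. I expect the main obstacle to be the no-plateau argument in the third step, since atomlessness of the push-forward (which is all (R2) gives directly) only rules out jumps in $F$ and not flat regions; it is precisely the interplay of convexity of $K$, continuity of $f$, and the equivalence of $P_K$ with Lebesgue measure in (R1) that is needed to exclude plateaus.
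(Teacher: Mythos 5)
Your proof is correct, and its overall structure---recognize $\lambda$ as the inverse of the pushforward CDF $F(\ell)=P_K(f^{-1}(-\infty,\ell))$, show $F$ is continuous and strictly increasing, then invert---matches the paper's. The continuity argument is essentially identical. Where you genuinely diverge is in the strict-monotonicity step, and your version is both more elementary and more general. The paper argues that $U_1=f^{-1}(-\infty,\ell_1)$ and $U_2=f^{-1}(-\infty,\ell_2)$ are open \emph{convex} sets, shows $U_1\subsetneq U_2$ via the intermediate value theorem, and then invokes the fact that $\interior(\operatorname{cl}(U_1))=U_1$ for open convex $U_1$ to locate a ball in $U_2\setminus\operatorname{cl}(U_1)$. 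This leans on quasi-convexity of $f$ (so that the sublevel sets are convex), a hypothesis not actually stated in the lemma, though satisfied by every $f_i$ to which it is applied. Your argument sidesteps this entirely: you run the intermediate value theorem on a segment $[x_1,x_2]\subset K$ (using only convexity of $K$) to produce a point $x$ with $f(x)$ strictly between $\ell_1$ and $\ell_2$, then use openness of $K$ and continuity of $f$ to find a ball $B(x,r)\subset f^{-1}(\ell_1,\ell_2)$, whose positive Lebesgue (hence $P_K$) measure contradicts the assumed plateau. This works for any continuous $f$, not just quasi-convex ones, so it proves the lemma exactly as stated; the paper's version is tied to convex sublevel sets, which buys nothing here since both proofs still need convexity (or at least path-connectedness) of $K$ to apply the IVT.
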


For a regular pair $\RegPair=(\{f_i:K\to\R\}_{i\in [m]},P_K)$, by Lemma \ref{lem:ell_t}, for each $i\in [m]$, there exists a unique strictly increasing continuous function $\lambda_i:(0,1)\to\R$ such that $P_K(f_i(-\infty,\lambda_i(t)) = t$. Using $\lambda_i(t)$, the following definition provides a continuous analogue of $A^{(i)}(t)$.

\begin{definition}
Let $(\mathcal{F},P_K)=(\{f_i\}_{i\in [m]},P_K)$ be a regular pair. For each $i\in [m]$ and $t\in (0,1)$, define
\[
K^{(i)}(t)\od f_i^{-1}(-\infty,\lambda_i(t)),
\]
where $\lambda_i :(0,1)\to\R$ is the unique function that satisfies $P_K(f_i^{-1}(-\infty,\lambda_i(t))) = t$. For convenience, we also define $K^{(i)}(0)\od\varnothing$ and $K^{(i)}(1)\od K$.
\end{definition}

\begin{figure}[H]
  \centering
  \includegraphics[width=0.40\linewidth]{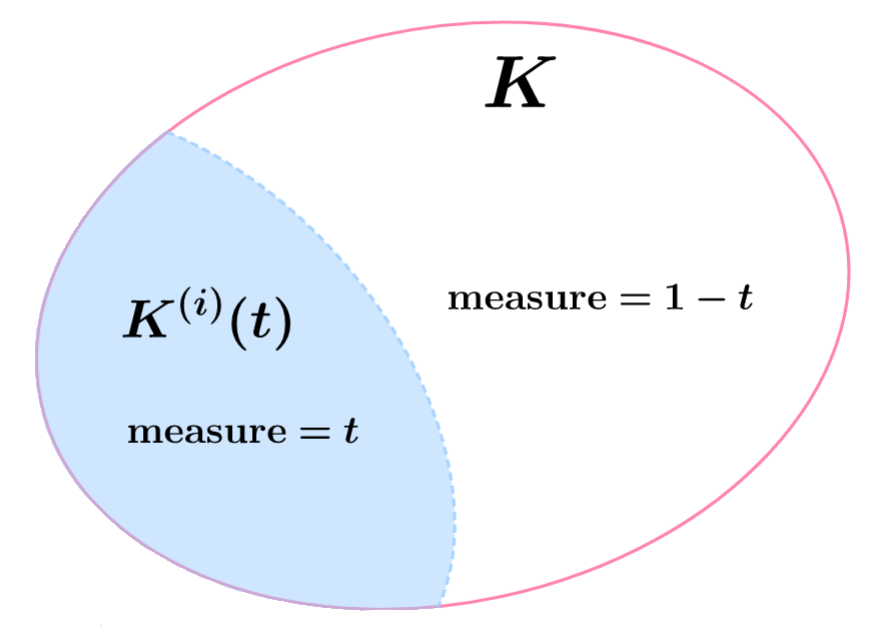}
  \caption{$K^{(i)}(t)$ is the sublevel set of $f_i$ whose $P_K$ measure is equal to $t$.}	
  \label{fig:Kit}
\end{figure}

An illustration of $K^{(i)}(t)$ can be found in Figure \ref{fig:Kit}. They are simply sublevel sets of $f_i$ rescaled with respect to the $P_K$ measure. On the other hand, for a point cloud $X_n = \{x_1,\cdots,x_n\}$ sampled from $P_K$, if we identify $[n]$ with $X_n$ via $a\leftrightarrow x_a$, then $A^{(i)}(t)$ may be interpreted as the set of points in $X_n$ that is inside an approximation of $K^{(i)}(t)$. Informed by  Lemma \ref{lem_Dow(S)_nerve}, we use $K^{(i)}(t)$ to define the continuous version of Dowker complex. 

\begin{definition}\label{def_Dowker}
Let $(\mathcal{F}, P_K)$ be a regular pair. Define a multi-filtered complex $\DDow(\mathcal{F}, P_K)$, indexed over $[0,1]^m$, by 
\[
\DDow(\mathcal{F}, P_K)(t_1, ..., t_m)\od \nerve\left(\{K^{(i)}(t_i)\}_{i=1}^m\right).
\]
This multi-filtered complex is called the {\bf Dowker complex} induced from $(\mathcal{F}, P_K)$.
\end{definition}

The complex $\Dow(S(M))$ is what we can obtain from the data matrix $M$, but it does not capture the whole geometric information of $(\mathcal{F},P_K)$. On the other hand, $\DDow(\mathcal{F}, P_K)$ reflects the whole geometric information but is not directly computable. Since $A^{(i)}(t_i)$ is an approximation of $K^{(i)}(t_i)$, we might expect $\Dow(S(M))$ approximates $\DDow(\mathcal{F}, P_K)$. As we shall see, this is the case but, for comparing them formally, we need the concept of the interleaving distance.
\begin{definition}
For a multi-filtered complex $\mathcal{K}$ indexed over $\R^m$ and $\epsilon>0$, the {\bf $\epsilon$-shift} of $\mathcal{K}$, denoted $\mathcal{K}+\epsilon$, is the multi-filtered complex defined by 
\[
(\mathcal{K}+\epsilon)(t_1, ..., t_m) \od \mathcal{K}(t_1+\epsilon, ..., t_m+\epsilon).
\]
For two multi-filtered complexes $\mathcal{K}$ and $\mathcal{L}$ indexed over $\R^m$, the {\bf simplicial interleaving distance} between $\mathcal{K}$ and $\mathcal{L}$ is defined as 
\[
d_{\INT}(\mathcal{K}, \mathcal{L})
\od \inf\{\epsilon>0:
\mathcal{K}\subseteq \mathcal{L}+\epsilon
\text{ and }
\mathcal{L}\subseteq \mathcal{K}+\epsilon\}.
\]
\end{definition}
Note that this interleaving distance is between multi-filtered simplicial complexes while the standard interleaving distance in topological data analysis is between {\it persistence modules}, namely, the level where the homology functor has been applied on the multi-filtered complex (see, e.g. \cite{Lesnick_2015}, for the standard definition of interleaving distance between multi-dimensional persistence modules). Similar to the standard interleaving distance, the simplicial interleaving distance $d_\INT$ defined here is also a pseudo-metric; namely, $d_\INT(\mathcal{K},\mathcal{L}) = 0$ does not imply $\mathcal{K}=\mathcal{L}$.

The definition of simplicial interleaving distance involves a shift of indices and that is why the two multi-filtered complexes to be compared are required to be indexed over the whole $\R^m$. Since both $\Dow(S(M))$ and $\DDow(\mathcal{F}, P_K)$ are indexed only over $[0,1]^m$, to compare them in terms of interleaving distance, we first need to extend their indexing domain to $\R^m$. The definition below is a natural way to extend the indexing domain.
\begin{definition}
For $\mathcal{D}=\Dow(S(M))$ or $\DDow(\mathcal{F}, P_K)$ and $(t_1, ..., t_m)\in\R^m$, define 
\[
\mathcal{D}(t_1, ..., t_m) \od \mathcal{D}(\theta(t_1),\cdots,\theta(t_m))
\]
where $\theta:\R\to [0,1]$ is defined by $\theta(t) = t$, if $0\leq t\leq 1$; $\theta(t) =0$, if $t<0$; $\theta(t) =1$, if $t>1$.
\end{definition}

With the above notations, we state one of our main theorems.
\begin{theorem}[Interleaving Convergence Theorem]\label{thm_inter_con}
Let $(\mathcal{F}, P_K)$ be a regular pair and $M_n$ be an $m\times n$ data matrix sampled from $\RegPair$. Then the simplicial interleaving distance between $\Dow(S(M_n))$ and $\DDow(\mathcal{F}, P_K)$ converges to $0$ in probability as $n\to\infty$; namely, for all $\epsilon>0$, 
\[
\lim_{n\to\infty} \Pr\left[d_{\INT}(\Dow(S(M_n)), \DDow(\mathcal{F}, P_K))>\epsilon\right]=0.
\]
\end{theorem}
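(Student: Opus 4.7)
The plan is to recast both multi-filtrations in quantile-normalized coordinates, then combine the Glivenko--Cantelli theorem with a uniform covering argument over a finite grid in $[0,1]^m$. For each $i\in[m]$, let $F_i(\ell)=P_K(f_i\le\ell)$ be the CDF of $f_i(X)$ for $X\sim P_K$; assumption (R2) makes $F_i$ continuous, and together with (R1) and continuity of $f_i$ forces $F_i$ to be strictly increasing on the interior of the range of $f_i$, so that the quantile of Lemma \ref{lem:ell_t} satisfies $F_i(\lambda_i(t))=t$ for $t\in(0,1)$. Define
\[
g_i\od F_i\circ f_i\colon K\to[0,1],\qquad g_i^{(n)}(x_a)\od \frac{\#\{b\in[n]:b\le_i a\}}{n}=F_i^{(n)}(f_i(x_a)),
\]
so that $K^{(i)}(t)=\{g_i<t\}$ for $t\in(0,1)$ and $A^{(i)}(t)=\{a:g_i^{(n)}(x_a)\le t\}$. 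In these coordinates, $\sigma\in\DDow(\mathcal{F},P_K)(t)$ iff some $x\in K$ satisfies $g_i(x)<t_i$ for all $i\in\sigma$, and $\sigma\in\Dow(S(M_n))(t)$ iff some $a\in[n]$ satisfies $g_i^{(n)}(x_a)\le t_i$ for all $i\in\sigma$. The classical Glivenko--Cantelli theorem applied to each $f_i$ gives $\sup_\ell|F_i^{(n)}(\ell)-F_i(\ell)|\to 0$ almost surely, and a union bound over $i\in[m]$ yields $\sup_{i\in[m],\,a\in[n]}|g_i^{(n)}(x_a)-g_i(x_a)|\to 0$ almost surely. Let $E_1^{(n)}$ denote the event that this supremum is at most $\epsilon/2$; then $\Pr[E_1^{(n)}]\to 1$.

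The easy inclusion $\Dow(S(M_n))\subseteq\DDow(\mathcal{F},P_K)+\epsilon\mathbf{1}$ holds on $E_1^{(n)}$: if $\sigma\in\Dow(S(M_n))(t)$ is witnessed by $a$, then for each $i\in\sigma$, $g_i(x_a)\le g_i^{(n)}(x_a)+\epsilon/2\le t_i+\epsilon/2<t_i+\epsilon$, so $x_a$ witnesses $\sigma\in\DDow(\mathcal{F},P_K)(t+\epsilon\mathbf{1})$. For the reverse inclusion, define $\Phi_\sigma(t)\od P_K\bigl(\bigcap_{i\in\sigma}\{g_i<t_i\}\bigr)$; by (R1) and the openness of the sublevel sets, $\sigma\in\DDow(\mathcal{F},P_K)(t)\iff\Phi_\sigma(t)>0$. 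Fix a finite grid $G\subset[0,1]^m$ of spacing $\epsilon/4$ and set
\[
\eta\od\min\{\Phi_\sigma(t^*):\sigma\subseteq[m],\ t^*\in G,\ \Phi_\sigma(t^*)>0\}>0.
\]
For each such pair $(\sigma,t^*)$, the probability that no sample lies in $\bigcap_{i\in\sigma}\{g_i<t^*_i\}$ is at most $(1-\eta)^n$; a union bound over the finite collection of pairs yields an event $E_2^{(n)}$ with $\Pr[E_2^{(n)}]\to 1$ on which every such intersection contains some $x_a$. Given $t$ with $\sigma\in\DDow(\mathcal{F},P_K)(t)$, round each $t_i$ up to the nearest grid point $t^*_i\in G$, so that $t\le t^*\le t+(\epsilon/4)\mathbf{1}$ and, by monotonicity, $\Phi_\sigma(t^*)\ge\Phi_\sigma(t)>0$. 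On $E_1^{(n)}\cap E_2^{(n)}$, some $x_a$ satisfies $g_i(x_a)<t^*_i\le t_i+\epsilon/4$, whence $g_i^{(n)}(x_a)<g_i(x_a)+\epsilon/2<t_i+3\epsilon/4<t_i+\epsilon$ for all $i\in\sigma$, so $\sigma\in\Dow(S(M_n))(t+\epsilon\mathbf{1})$. Both inclusions together give $d_\INT(\Dow(S(M_n)),\DDow(\mathcal{F},P_K))\le\epsilon$ on $E_1^{(n)}\cap E_2^{(n)}$.

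The main obstacle is the reverse inclusion: individual samples cannot witness simplices at arbitrarily close thresholds, so uniformity in the continuous parameter $t$ must be recovered by discretization. Assumption (R1) is precisely what makes this possible---it promotes non-emptiness of an open sublevel-set intersection to positive $P_K$-measure, delivering the uniform lower bound $\eta>0$ across the finite grid that is needed for a single law-of-large-numbers step to cover all pairs $(\sigma,t^*)$ simultaneously. Since $\epsilon>0$ is arbitrary and $\Pr[E_1^{(n)}\cap E_2^{(n)}]\to 1$, the claimed convergence in probability follows.
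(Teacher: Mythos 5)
Your proof is correct, and it takes a genuinely different route from the paper's. The paper builds explicit multivariate CDF machinery: it defines $R_\infty(t)=P_K\bigl(\bigcap_i K^{(i)}(t_i)\bigr)$, its empirical counterpart $R_n$, and a step-function version $\hat R_n$ obtainable from $M_n$; it then proves an $m$-dimensional Glivenko--Cantelli theorem for $\lVert R_n-R_\infty\rVert_\infty$, shows $K_n^{(i)}$ and $K^{(i)}$ are $\epsilon$-interleaved (so $\hat R_n$ and $R_n$ are), re-expresses both Dowker complexes in terms of the zero sets $Z_\infty$, $\hat Z_n$ of $R_\infty$, $\hat R_n$, and finally shows those zero sets are $\epsilon$-interleaved w.h.p. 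You instead work in quantile-normalized coordinates $g_i=F_i\circ f_i$, invoke only the classical one-dimensional Glivenko--Cantelli theorem per coordinate to control the discrepancy between $\ord_i(M_n,a)/n$ and $T_i(x_a)$, and handle the harder inclusion by a direct covering argument: a finite $\epsilon/4$-grid of $[0,1]^m$, a uniform positive lower bound $\eta$ on the measures $\Phi_\sigma(t^*)$ of the finitely many relevant open intersections, and a union bound so that w.h.p.\ every such intersection is hit by a sample. Your approach is more elementary and self-contained (it never needs the multivariate CDF or the zero-set formalism), and it isolates sharply where assumption (R1) is used: to promote non-emptiness of open intersections to the uniform lower bound $\eta>0$. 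The paper's heavier scaffolding ($R_\infty$, $R_n$, $\hat R_n$, $Z_\infty$, $\hat Z_n$) is not wasted, however: it is reused verbatim in the proofs of Theorem \ref{thm_asym_L_k} and Theorem \ref{thm_suff_cond_test}, so the paper amortizes the cost of that machinery over several results, whereas your argument proves Theorem \ref{thm_inter_con} alone more cheaply. One small point worth noting in your write-up: the identification $K^{(i)}(t)=\{g_i<t\}$ is exact only for $t\in(0,1)$ (at $t=1$ the paper sets $K^{(i)}(1)=K$, while $\{g_i<1\}$ may be a proper subset), but since the discrepancy is $P_K$-null and both sides are open, the characterization $\sigma\in\DDow\RegPair(t)\iff\Phi_\sigma(t)>0$ still holds, so the gap is harmless.
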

The proof of Theorem \ref{thm_inter_con} is given in Section 6.2. In Section \ref{sec:app_to_dim_infer_prob}, we use  Theorem \ref{thm_inter_con} to infer a lower bound for the dimension of $\RegPair$.


\section{Estimating the stimulus space dimension}
\label{sec:app_to_dim_infer_prob}
\subsection{Persistence modules and maximal persistence length}
First we recall the definition of persistence modules, persistence intervals and persistence diagrams; for more details see, e.g. Chapter 1 of \cite{Oudot2015PersistenceT}. Then we define the maximal persistence length for a 1-dimensional filtration of simplicial complexes. We fix a ground field $\F$, which is normally taken to be $\F_2$ for computational reasons; all the statements here do not depend on the choice of the  field.
\begin{definition}
A {\bf persistence module} $\mathcal{M}$ indexed over an interval $[0,T]$ is a collection $\{\mathcal{M}_t\}_{t\in [0,T]}$ of vector spaces over $\F$ along with linear maps $\phi_s^t:\mathcal{M}_s\to\mathcal{M}_t$ for every $s\leq t$ in $[0,T]$ such that $\phi_s^u = \phi_t^u\circ\phi_s^t$, and     $\phi_t^t =\operatorname{id_{\mathcal{M}_t}}$ for all $s\leq t\leq u$ in $[0,T]$.
\end{definition}

A well-known structural characterization of a persistence module is via its {\it persistence intervals} (or equivalently, its {\it persistence diagram}). To talk about persistence intervals, we would need to define the {\it direct sum} of persistence modules and {\it interval modules}.

\begin{definition}
Let $\mathcal{M} = \{\mathcal{M}_t\}_{t\in [0,T]}$ and $\mathcal{N} = \{\mathcal{N}_t\}_{t\in [0,T]}$ be persistence modules over the same index interval $[0,T]$. Let $\{\phi_s^t:s, t\in [0,T], s\leq t\}$ and $\{\psi_s^t:s, t\in [0,T], s\leq t\}$ be the linear maps of $\mathcal{M}$ and $\mathcal{N}$. The {\bf direct sum} of $\mathcal{M}$ and $\mathcal{N}$, denoted $\mathcal{M}\oplus\mathcal{N}$, is the persistence module, defined by $(\mathcal{M}\oplus\mathcal{N})_t\od\mathcal{M}_t\oplus\mathcal{N}_t$ along with the linear maps $(\phi_s^t)\oplus (\psi_s^t):\mathcal{M}_s\oplus\mathcal{N}_s\to\mathcal{M}_t\oplus\mathcal{N}_t$ for every $s\leq t$ in $[0,T]$.
\end{definition}

\begin{definition}
Let $J\subseteq [0,T]$ be an interval in $[0,T]$, which can be either open, closed, or half-open. The {\bf interval module} $\I_J$ defined over $[0,T]$ is the persistence module $\I_J = \{(\I_J)_t\}_{t\in [0,T]}$ defined by $(\I_J)_t\od\F$ for all $t\in J$ and $(\I_J)_t\od 0$ for all $t\notin J$, along with the identity linear maps from $(\I_J)_s$ to $(\I_J)_t$ for every $s\leq t$ in $J$ and zero maps from $(\I_J)_s$ to $(\I_J)_t$ for other $s\leq t$ in $[0,T]$.
\end{definition}

The next decomposition theorem is a structural theorem that characterizes persistence modules and guarantees the existence and uniqueness of persistence intervals (see, e.g., Section 1.1 and 1.2 of \cite{Oudot2015PersistenceT} and references therein). 
\begin{theorem}\label{thm:decomp_per_homology}
Let $\mathcal{M} = \{\mathcal{K}_t\}_{t\in [0,T]}$ be a persistence module over a closed interval $[0,T]$. If, for each $t\in [0,T]$, $\mathcal{M}_t$ is a finite dimensional vector space over $\F$, then $\mathcal{M}$ can be decomposed as a direct sum of interval modules; namely, 
\begin{equation*}
\mathcal{M} = \bigoplus_{J} \I_J
\end{equation*}
where $\{J\}$ is a collection of some intervals (could be open, closed, or half-open) in $[0,T]$. The decomposition is unique in the sense that, for every such decomposition, the collection of intervals is the same.
\end{theorem}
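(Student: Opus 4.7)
The theorem is a classical structure result for pointwise finite-dimensional persistence modules (Crawley-Boevey's theorem in its $[0,T]$-indexed form), and since the persistence modules of interest in this paper arise from finite filtrations of simplicial complexes, I would follow the elementary route via finitely generated graded modules over $\F[t]$ (the Zomorodian--Carlsson approach), then bootstrap to the continuous index case using the pointwise finite-dimensionality hypothesis.

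The first step is \emph{discretization}. Using that each $\mathcal{M}_t$ is finite-dimensional, I would extract a finite totally ordered set of ``critical values'' $0 = t_0 < t_1 < \cdots < t_N = T$ with the property that on every open subinterval $(t_i, t_{i+1})$ every structure map $\phi_s^t$ is an isomorphism. Concretely, a critical value is a point at which $\dim \mathcal{M}_t$ jumps or at which $\operatorname{rank}\phi_s^t$ fails to stabilize; in the simplicial setting these are simply the filtration values at which new simplices enter. After collapsing indices between consecutive critical values (which carry identical data up to canonical isomorphism), we are left with a persistence module indexed by the finite chain $\{t_0 < t_1 < \cdots < t_N\}$.

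This discretized module is a representation of the type-$A_{N+1}$ quiver with all arrows oriented the same way, equivalently a finitely generated graded module over $\F[t]$. By the structure theorem for finitely generated modules over a PID (or Gabriel's theorem applied to the $A_n$ quiver), it decomposes as a direct sum of indecomposable summands, each of which is an interval module supported on some subchain $\{t_i, t_{i+1}, \ldots, t_j\}$. This is the algebraic heart of the argument and is essentially Smith normal form applied to the chain of linear maps.

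Finally, each discrete interval summand must be lifted back to a continuous interval module on $[0,T]$ by restoring the indices between critical values. The open/closed/half-open type of each endpoint is dictated by left- and right-continuity of the rank functions $s \mapsto \operatorname{rank}\phi_s^t$ at the critical value --- this boundary bookkeeping is the main technical subtlety, and is where the distinction between the four types of intervals in the statement becomes relevant. Uniqueness of the decomposition follows from the Krull--Schmidt theorem in the category of persistence modules, which applies because each interval module $\I_J$ has endomorphism ring $\F$, hence local. The hard part is thus not the algebra, which is standard, but verifying that the discrete-to-continuous lift correctly recovers the interval endpoints; everything else reduces to citing well-known results.
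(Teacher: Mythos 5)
The paper does not actually prove Theorem~\ref{thm:decomp_per_homology}; it cites it as a standard result from Section~1.1--1.2 of~\cite{Oudot2015PersistenceT} (i.e., the Crawley-Boevey decomposition theorem for pointwise finite-dimensional persistence modules). So there is no in-text proof to compare your proposal against, but your proposal does contain a genuine gap.

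Your discretization step does not go through under the hypotheses as stated. You claim that pointwise finite-dimensionality of $\mathcal{M}_t$ lets you ``extract a finite totally ordered set of critical values,'' but that is false for a general pointwise finite-dimensional persistence module indexed over $[0,T]$. For instance, the module $\mathcal{M} = \bigoplus_{n\geq 1} \I_{J_n}$ with $J_n = [1/(n+1), 1/n)$ has $\dim \mathcal{M}_t \leq 1$ for every $t\in[0,1]$, yet it has infinitely many critical values accumulating at~$0$. Pointwise finite-dimensionality controls the fiber dimension, not the number of rank jumps. Consequently there is no reduction to a finite $A_{N+1}$ quiver or to a finitely generated graded $\F[t]$-module, and the Zomorodian--Carlsson / Smith normal form machinery you invoke does not apply. (You hedge by saying ``in the simplicial setting these are simply the filtration values at which new simplices enter,'' which is true for the modules actually used in this paper, but then you are proving a narrower statement than the theorem claims.) The correct general proof --- Crawley-Boevey's, which is what Oudot's book relies on --- avoids discretization entirely and works with a direct functorial-filtration argument using Zorn's lemma to handle a potentially infinite set of intervals. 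Relatedly, since the decomposition can have infinitely many summands, uniqueness needs Azumaya's theorem (the infinite Krull--Schmidt), not the classical finite Krull--Schmidt you cite; this is a smaller issue but worth flagging. If you want to keep a Zomorodian--Carlsson-style argument, you must either add the hypothesis that $\mathcal{M}$ is a constructible (finite-critical-value) module, or rework the discretization to account for a possibly infinite, non-discrete set of critical values.
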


Each interval $J$ in the decomposition stated in Theorem \ref{thm:decomp_per_homology} is called a {\bf persistence interval} of $\mathcal{M}$. We may summarize all persistence intervals as a 2D diagram in $[0,T]\times [0,T]$, called the {\it persistence diagram} of $\mathcal{M}$: for each persistence interval with left end $\alpha$ and right end $\beta$, we mark a point $(\alpha, \beta)$ in $[0,T]\times [0,T]$. The diagram consisting of all such points is called the {\bf persistence diagram} of $\mathcal{M}$, denoted $\dgm(\mathcal{M})$. Rigorously speaking, we should distinguish open, closed, and half-open intervals. For our purpose, we only use the lengths of the persistence intervals, and hence the distinction of open, closed, and half-open intervals does not really matter. 

An important class of persistence modules is obtained from a 1-dimensional filtration of simplicial complexes by applying the homology functors $H_k(\ \cdot\ ;\F)$, $k = 0, 1, 2, ...$. Specifically, for a 1-dimensional filtration of simplicial complexes $\mathcal{K} = \{\mathcal{K}_t\}_{t\in [0,T]}$ and a fixed nonnegative integer $k$, we have the persistence module $H_k(\mathcal{K};\F) = \{H_k(\mathcal{K}_t;\F)\}_{t\in [0,T]}$ along with the linear maps $(i_s^t)_*:H_k(\mathcal{K}_s;\F)\to H_k(\mathcal{K}_t;\F)$ for every $s\leq t$ in $[0,T]$, where $i_s^t$ is the inclusion map from $\mathcal{K}_s$ to $\mathcal{K}_t$. Since $H_k(\ \cdot\ ;\F)$ is a covariant functor, the equality $(i_t^u)_*\circ (i_s^t)_* = (i_s^u)_*$ holds for every $s\leq t\leq u$ in $[0,T]$. 

For each $k$, we may use the persistence diagram of $H_k(\mathcal{K};\F)$ for analysis. For our purpose, instead of the whole diagram, we summarize the diagram by only looking at the longest length among all persistence intervals, which we formally define below:
\begin{definition}
Let $\mathcal{K} = \{\mathcal{K}_t\}_{t\in [0,T]}$ be a 1-dimensional filtration of simplicial complexes. For each nonnegative integer $k$, we define 
\begin{equation}
l_{\max}(k,\mathcal{K})\od\sup\{\beta-\alpha:(\alpha,\beta)\in H_k(\mathcal{K};\F)\}
\end{equation}
and call it the {\bf maximal persistence length in dimension {\boldmath$k$}}. 
\end{definition}
This definition is similar to the one used in Section 3 of \cite{bobrowski2015maximally}.\footnote{The only difference is that the authors in \cite{bobrowski2015maximally} measures the maximal cycle multiplicatively while we measure it additively.}  Normally, the length of a persistence interval in $H_k(\mathcal{K};\F)$ is viewed as its significance in dimension $k$. Therefore, $l_{\max}(k,\mathcal{K})$, the maximum among such interval lengths, is viewed as the significance of $\mathcal{K}$ in dimension $k$. 

\subsection{$L_k\RegPair$ and its relation to the dimension of $\RegPair$}
In this section, from the regular pair $\RegPair$, we define quantities that we use to bound the dimension $d\RegPair$ from below. We start with the following notation.  
\begin{definition}
Given $(\mathcal{F}, P_K)$, where $\mathcal{F} = \{f_i:K\to\R\}_{i\in [m]}$ is a collection of quasi-convex functions defined on a convex open set $K$ and $P_K$ is a probability measure on $K$. For $x\in K$, we define 
\[
T_i(x) \od P_K(f_i^{-1}(-\infty,f_i(x))).
\]  
\end{definition}
\begin{figure}[H]
  \centering
  \includegraphics[width=0.35\linewidth]{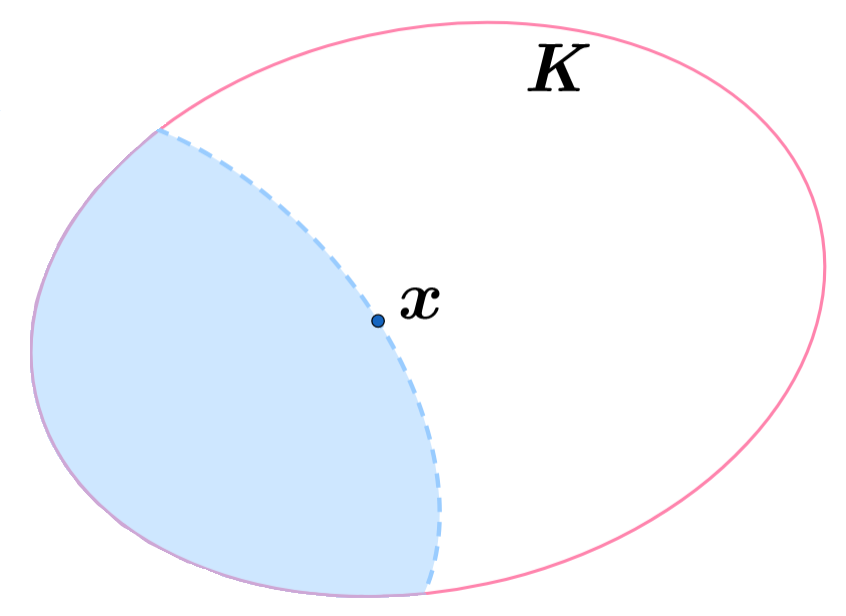}
  \caption{$T_i(x)$ is the $P_K$ measure of the shaded area $f_i^{-1}(-\infty,f_i(x))$.}	
  \label{fig:Tix}
\end{figure}
 The function  $T_i(x)$ may be regarded as the {\it $P_K$-rescaled version of $f_i$} (see Figure \ref{fig:Tix} for an illustration). Now we   define a one dimensional filtration of simplicial complexes $\mathcal{K}_x$   that are used to infer a lower bound of the dimension $d\RegPair$. The geometry  underlying the definition is depicted in Figure \ref{fig:dim_infer_geo_idea}.
 

\begin{figure}[H] 
  \centering
  \includegraphics[width=0.3\linewidth]{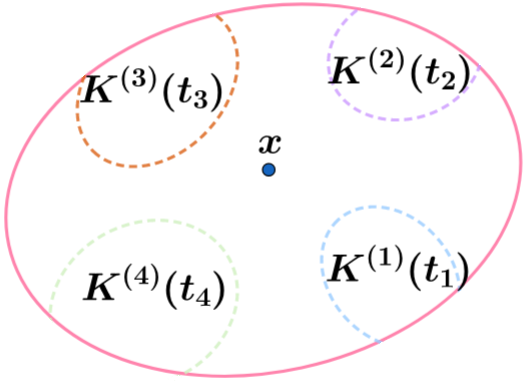}
  \includegraphics[width=0.3\linewidth]{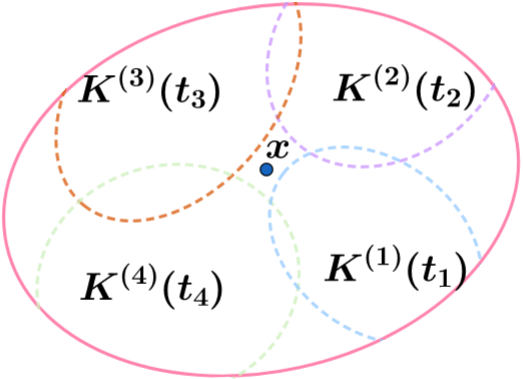}
  \includegraphics[width=0.3\linewidth]{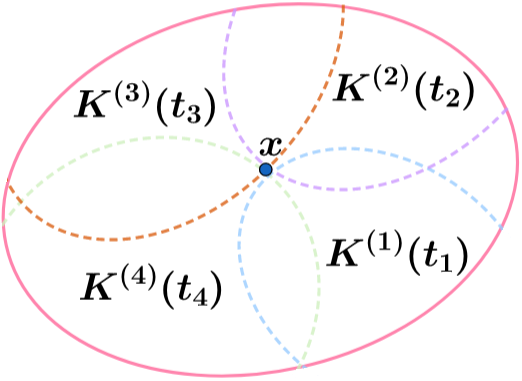}
  \caption{From left to right, the filtration $\mathcal{K}_x(t)$, as the nerve of the sublevel sets of $\{f_i\}_{i\in [m]}$, starts at $t=0$ as the empty simplicial complex and increases as $t$ goes up to $t_{\max}(x)$, where the sublevel sets of $\{f_i\}_{i\in [m]}$ touch the point $x$ on their boundaries. The formal formulation of the process is in \eqref{eq:L_k_eq1} of Definition \ref{def_L_k}.}
  \label{fig:dim_infer_geo_idea}
\end{figure}

Throughout Section \ref{sec:app_to_dim_infer_prob}, we fix an arbitrary coefficient field $\F$ when taking homology; namely, for a filtration of simplicial complexes $\mathcal{K}$ and a nonnegative integer $k$, $H_k(\mathcal{K})\od H_k(\mathcal{K};\F)$.

\begin{definition}\label{def_L_k}
Let $\RegPair$ be a regular pair, where $\mathcal{F} = \{f_i:K\to\R\}_{i\in [m]}$. For $x\in K$, let 
\begin{equation}
t_{\max}(x) \od \max_{i\in [m]} T_i(x).
\end{equation}
Define a one dimensional filtered complex $\mathcal{K}_x$, indexed over $t\in [0,t_{\max}(x)]$, by 
\begin{equation}\label{eq:L_k_eq1}
\mathcal{K}_x(t) \od \DDow(\mathcal{F},P_K)\left(T_1(x)-(t_{\max}(x)-t),...,T_m(x)-(t_{\max}(x)-t)\right).
\end{equation}
For every nonnegative integer $k$, we define
\begin{equation}\label{eq:L_k_eq2}
L_k(\mathcal{F},P_K) \od \sup_{x\in K} l_{\max}(k,\mathcal{K}_x).
\end{equation}
\end{definition}

As illustrated in Figure \ref{fig:dim_infer_geo_idea}, if $x$ is ``central" in some appropriate sense (see Definition  \ref{def:Cent_1}  in Section \ref{sec:suff_cond}), a $(d\RegPair-1)$-dimensional sphere is expected to show up and persist for a significant amount of time. In general, $L_k\RegPair$ can at least be used to derive a lower bound for the dimension of the regular pair $(\mathcal{F}, P_K)$ due to the following lemma.
\begin{lemma}\label{lem_lower_bound} 
Let $\RegPair$ be a regular pair. Then, for $k\geq d\RegPair$, $L_k(\mathcal{F},P_K)=0$. In particular, 
\begin{equation}\label{eq:d_low:definition}  
d_{\low}\RegPair\od 1+\max\{k:L_k(\mathcal{F},P_K)> 0\} \leq d\RegPair.		
\end{equation}
\end{lemma}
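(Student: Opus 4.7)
The plan is to argue that the filtered complexes $\mathcal{K}_x$ appearing in the definition of $L_k\RegPair$ are, at every filtration value, nerves of open convex subsets of $\mathbb{R}^d$, and therefore have trivial homology in dimensions $\geq d\RegPair$. Consequently all persistence intervals in $H_k(\mathcal{K}_x)$ are empty for $k\geq d\RegPair$, and the maximal persistence length vanishes.

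First, I would unpack Definition \ref{def_L_k}: by Definition \ref{def_Dowker}, for each fixed $x\in K$ and $t\in[0,t_{\max}(x)]$,
\[
\mathcal{K}_x(t)=\nerve\left(\{K^{(i)}(s_i(t))\}_{i\in[m]}\right),\qquad s_i(t):= T_i(x)-(t_{\max}(x)-t).
\]
Each $K^{(i)}(s)$ is either empty or of the form $f_i^{-1}(-\infty,\lambda_i(s))$. By continuity of $f_i$, this set is open; by quasi-convexity of $f_i$, it is convex. Hence $\{K^{(i)}(s_i(t))\}_{i\in[m]}$ (after dropping empty members, which contribute no vertices to the nerve) is a finite collection of open convex subsets of the convex open set $K\subseteq\R^d$.

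Second, I would invoke the Nerve Lemma. Any non-empty intersection of convex sets is convex, hence contractible, so the cover is a good cover of its union. Since $\bigcup_{i} K^{(i)}(s_i(t))$ is an open subset of the metric space $\R^d$ and the cover is finite, the Nerve Lemma applies and yields a homotopy equivalence
\[
\mathcal{K}_x(t)\simeq \bigcup_{i\in[m]} K^{(i)}(s_i(t)).
\]
The right-hand side is an open subset $U\subseteq K\subseteq \R^d$. Open subsets of $\R^d$ are (non-compact, when proper) topological $d$-manifolds, and in particular $H_k(U;\F)=0$ for every $k\geq d$. Therefore $H_k(\mathcal{K}_x(t);\F)=0$ for every $t\in[0,t_{\max}(x)]$ whenever $k\geq d\RegPair$.

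Third, I would conclude. Since the persistence module $H_k(\mathcal{K}_x)$ is identically zero for $k\geq d\RegPair$, its interval decomposition (Theorem \ref{thm:decomp_per_homology}) contains no intervals, and by the convention $\sup\varnothing=0$ for lengths we obtain $l_{\max}(k,\mathcal{K}_x)=0$. Taking the supremum over $x\in K$ in \eqref{eq:L_k_eq2} gives $L_k\RegPair=0$ for all $k\geq d\RegPair$. The inequality $d_{\low}\RegPair\leq d\RegPair$ then follows directly from \eqref{eq:d_low:definition}.

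The main technical point is the Nerve Lemma invocation, together with the standard fact that open subsets of $\R^d$ have vanishing homology in degrees $\geq d$; both are classical, so no serious obstacle is expected. A minor bookkeeping item is to handle empty $K^{(i)}(s_i(t))$ (which occur at small $t$ when $s_i(t)\leq 0$) by simply discarding them, which is consistent with the nerve definition based on non-empty intersections.
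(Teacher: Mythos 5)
Your proof is correct and follows essentially the same route as the paper: both identify $\mathcal{K}_x(t)$ as the nerve of a good cover by open convex sublevel sets, apply the Nerve Lemma to get a homotopy equivalence with an open subset of $\R^d$, and then use vanishing of $H_k$ for open subsets of $\R^d$ in degrees $k\ge d$. Your extra remarks about discarding empty $K^{(i)}(s_i(t))$ and the $\sup\varnothing=0$ convention are sensible bookkeeping but do not change the argument.
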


\begin{proof}
For notational simplicity, in this proof, we denote $d_{\low}=d_{\low}\RegPair$ and $d = d\RegPair$. Recall that $\DDow(\mathcal{F},P_K)(t_1,..., t_m) = \nerve\left(\{f_i^{-1}(-\infty,\lambda_i(t_i))\}_{i=1}^m\right)$.
Since the  functions $f_i$ are quasi-convex, intersections of convex sets are convex and convex sets are contractible, the set $\{f_i^{-1}(-\infty,\lambda_i(t_i))\}_{i=1}^m$ is a good cover. Thus, by nerve lemma (see, e.g., Theorem 10.7 in \cite{Bjorner:1996:TM:233228.233246} or Corollary 4G.3 in \cite{MR1867354}), we have the following  homotopy equivalence:
\begin{equation}\label{eq:nerve_homotopy}
\DDow(\mathcal{F},P_K)(t_1,..., t_m)\sim\bigcup_{i\in [m]} f_i^{-1}(-\infty,\lambda_i(t_i)).
\end{equation}
Notice that $\bigcup_{i\in [m]} f_i^{-1}(-\infty,\lambda_i(t_i))$ is open in $\R^d$ and it is well-known that, for every open set $U\subseteq\R^d$, $H_k(U) = 0$, for all $k\geq d$ (see, e.g., Proposition 3.29 in \cite{MR1867354}). Thus, for $k\geq d$, $H_k\left(\bigcup_{i\in [m]} f_i^{-1}(-\infty,\lambda_i(t_i))\right) = 0$. Combining with \eqref{eq:nerve_homotopy}, we obtain, for $k\geq d$ and $(t_1, ..., t_m)\in [0,1]^m$, $H_k(\DDow(\mathcal{F},P_K)(t_1,..., t_m)) = 0$. Therefore, for $k\geq d$, $l_{\max}(k,\mathcal{K}_x)=0$, for all $x\in K$, and $L_k\RegPair=0$. Thus $d_{\low}-1 = \max\{k:L_k(\mathcal{F},P_K)> 0\}\leq d-1$ or, equivalently, $d_{\low}\leq d$. 
\end{proof}
$L_k(\mathcal{F},P_K)$ is defined with respect to a regular pair $(\mathcal{F}, P_K)$ and thus is not directly computable from discrete data. In Section \ref{subsection:3.3}, we follow an analogous approach in defining $L_k\RegPair$ to define $L_k(M)$ and prove that $L_k(M)$ converges to $L_k\RegPair$. 

\subsection{$L_k(M)$ and its convergence to $L_k\RegPair$}\label{subsection:3.3}  
In Theorem \ref{thm_inter_con}, we see that, for the data matrix $M$, $\Dow(S(M))$ approximates $\DDow(\mathcal{F}, P_K)$ with high probability. Thus, it is natural to use $\Dow(S(M))$ to define an analogue $L_k(M)$ of $L_k\RegPair$.

\begin{definition}\label{def_L_k_emp}
Let $M\in\mathcal{M}_{m,n}^o$ and $S(M)=\{s_1, ..., s_m\}$ be the collection of $m$ sequences induced from the rows of $M$ ($s_i$ corresponds to row $i$). For $a\in [n]$ and $i\in [m]$, denote
\begin{equation}
\ord_i(M,a)\od \#(\{b\in [n]:M_{ib}\leq M_{ia}\}).
\end{equation}
For $a\in [n]$, which corresponds to the $a$-th column of the data matrix $M$, let 
\begin{equation}
\hat{t}_{\max}(a)\od \max_{i\in [m]} \frac{\ord_i(M,a)}{n}.
\end{equation}
Define a one dimensional filtered complex $\hat{\mathcal{K}}_{a}$, indexed over $t\in [0,\hat{t}_{\max}(a)]$, by
\begin{equation}
\hat{\mathcal{K}}_a(t) \od \Dow(S(M))\left(\frac{\ord_1(M,a)}{n}-(\hat{t}_{\max}(a)-t), ..., \frac{\ord_m(M,a)}{n}-(\hat{t}_{\max}(a)-t)\right).
\end{equation}
See Definition \ref{def_Dowker_emp} for the definition of $\Dow(S(M))$. For every nonneative integer $k$, we define 
\begin{equation}\label{eq:L_k_emp_eq2}
L_k(M) \od \max_{a\in [n]} l_{\max}(k,\hat{\mathcal{K}}_a).
\end{equation}
\end{definition}

Since $\Dow(S(M))$ approximates $\DDow(\mathcal{F},P_K)$, intuitively, $\hat{\mathcal{K}}_a(t)$ approximates $\mathcal{K}_{x_a}(t)$ and $L_k(M)$ approximates $L_k(\mathcal{F},P_K)$. With the help of  Theorem \ref{thm_inter_con} and the Isometry Theorem in topological data analysis  (see e.g. Theorem \ref{thm:isometry_thm} in Section \ref{section:Appendix} of \cite{Oudot2015PersistenceT}), these intuitions are justified as follows:

\begin{theorem}\label{thm_asym_L_k}
Let $(\mathcal{F}, P_K)$ be a regular pair. Assume that $K$ is bounded and each $f_i\in\mathcal{F}$ can be continuously extended to the closure $\bar{K}$. Let $M_n$ be an $m\times n$ matrix sampled from $\RegPair$. Then, for all $k\in\{0\}\cup\N$, as $n\to\infty$, $L_k(M_n)$ converges to $L_k(\mathcal{F},P_K)$ in probability; namely, for all $\epsilon>0$, 
\[
\lim_{n\to\infty} \Pr\left[\left|L_k(M_n)-L_k(\mathcal{F},P_K)\right|<\epsilon\right]=1.
\]
Moreover, the rate of convergence is independent of $k$.\footnote{i.e. for all $\epsilon>0$, $\lim_{n\to\infty} \Pr\left[\sup_{k\geq 0}\left|L_k(M_n)-L_k(\mathcal{F},P_K)\right|<\epsilon\right]=1$}
\end{theorem}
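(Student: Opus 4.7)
The plan is to reduce convergence of $L_k(M_n)$ to $L_k(\mathcal{F},P_K)$ to a pointwise interleaving estimate between the empirical one-parameter filtrations $\hat{\mathcal{K}}_a$ and the continuous filtrations $\mathcal{K}_{x_a}$, and then transfer it to maximal persistence lengths via the Isometry Theorem. The key observation is that both $\hat{\mathcal{K}}_a$ and $\mathcal{K}_{x_a}$ are one-dimensional diagonal slices of their respective multi-filtered complexes $\Dow(S(M_n))$ and $\DDow(\mathcal{F},P_K)$, anchored at the coordinate vectors $\hat\tau(a)\od(\ord_1(M_n,a)/n,\dots,\ord_m(M_n,a)/n)$ and $\tau(x_a)\od(T_1(x_a),\dots,T_m(x_a))$, and swept along the diagonal $(1,\dots,1)$. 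Hence it suffices to control (i) the uniform gap $\sup_{i,a}|\hat\tau_i(a)-\tau_i(x_a)|$, and (ii) the multi-dimensional interleaving distance $d_{\INT}(\Dow(S(M_n)),\DDow(\mathcal{F},P_K))$.

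For (i), $\ord_i(M_n,a)/n$ is the value at $f_i(x_a)$ of the empirical CDF of $\{f_i(x_b)\}_{b=1}^n$, while $T_i(x_a)=F_i(f_i(x_a))$ for $F_i$ the population CDF of $f_i(X)$, $X\sim P_K$. Continuity of each $F_i$ is guaranteed by (R2), so the classical Glivenko-Cantelli theorem applied to each of the $m$ CDFs yields $\sup_{i,a}|\hat\tau_i(a)-\tau_i(x_a)|\to 0$ almost surely. For (ii), Theorem \ref{thm_inter_con} directly gives $d_{\INT}(\Dow(S(M_n)),\DDow(\mathcal{F},P_K))\to 0$ in probability. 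Combining the two---monotonicity of the nerve in each coordinate turns a coordinate-wise perturbation of size $\delta$ in the anchor into a diagonal shift of size at most $2\delta$, which then composes with the multi-dimensional shift $\delta'$ from (ii)---yields, for some $\eta_n\to 0$ in probability and simultaneously for all $a\in [n]$, the one-parameter interleaving bound $d_{\INT}(\hat{\mathcal{K}}_a,\mathcal{K}_{x_a})\le\eta_n$. Applying the Isometry Theorem to $H_k(-;\F)$ then gives a bottleneck bound on persistence diagrams, which in turn gives $|l_{\max}(k,\hat{\mathcal{K}}_a)-l_{\max}(k,\mathcal{K}_{x_a})|\le 2\eta_n$, \emph{uniformly} in both $k$ and $a$.

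Taking the maximum over $a$ immediately delivers the upper bound $L_k(M_n)\le L_k(\mathcal{F},P_K)+2\eta_n$, uniformly in $k$. For the matching lower bound at fixed $k$, pick an $\epsilon$-almost maximizer $x^*_k\in\bar K$; the supremum can be taken over the compact set $\bar K$ because the hypothesis that $f_i$ extends continuously to $\bar K$ lets $\mathcal{K}_x$ be defined there. Uniform continuity of each $f_i$ on $\bar K$ together with continuity of $F_i$ makes each $T_i$ uniformly continuous on $\bar K$, so $x\mapsto\mathcal{K}_x$ is continuous in interleaving distance and, by the Isometry Theorem again, $x\mapsto l_{\max}(k,\mathcal{K}_x)$ is continuous on $\bar K$. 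Choose $r_k>0$ so that $l_{\max}(k,\mathcal{K}_x)>L_k(\mathcal{F},P_K)-\epsilon$ for all $x\in B(x^*_k,r_k)\cap K$; since $P_K$ is equivalent to Lebesgue this neighborhood has positive $P_K$-mass, so with probability tending to $1$ some sample point $x_{a_k}$ lies inside. Chaining the bounds then gives $L_k(M_n)\ge l_{\max}(k,\hat{\mathcal{K}}_{a_k})\ge l_{\max}(k,\mathcal{K}_{x_{a_k}})-2\eta_n\ge L_k(\mathcal{F},P_K)-\epsilon-2\eta_n$.

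For uniformity in $k$, the upper bound is automatic, and the lower bound is trivially satisfied for $k\ge d\RegPair$, where $L_k(\mathcal{F},P_K)=0$ by Lemma \ref{lem_lower_bound}; only the finitely many indices $k<d\RegPair$ require the full argument, and these can be handled simultaneously with probability tending to $1$. The main obstacle I expect is the careful bookkeeping required to combine ingredients (i) and (ii) into the one-parameter interleaving bound---verifying that coordinate-wise perturbation of the anchor vector propagates through the diagonal-slice construction to a genuine one-parameter shift---together with establishing continuity of $l_{\max}(k,\mathcal{K}_x)$ at potentially boundary near-maximizers, which is precisely where the hypotheses that $K$ is bounded and the $f_i$ extend continuously to $\bar K$ become essential.
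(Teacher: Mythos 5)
Your proposal is correct and takes essentially the same route as the paper. The paper's proof packages the same ingredients slightly differently: it expresses both $L_k\RegPair$ and $L_k(M_n)$ as double suprema over diagonal rays anchored at $\mathcal{T}\RegPair$ and $\hat{\mathcal{T}}(M_n)$ respectively (Lemma \ref{lem:L_k_sup_expression}), proves Hausdorff convergence $\dH(\mathcal{T}\RegPair,\hat{\mathcal{T}}(M_n))\to 0$ in probability (Lemma \ref{lem:Hausdorff_convergence}, using Glivenko--Cantelli and a covering argument that parallels your density step), and then transfers Hausdorff and interleaving bounds to the double-supremum expression via the Isometry Theorem (Lemmas \ref{lem:change_mathcal_T} and \ref{lem:change_complex}); your separate treatment of the upper bound (trivial by $\max\le\sup$) and lower bound (near-maximizer plus sample density) is just the two directions of the paper's symmetric Hausdorff comparison unbundled, and relies on exactly the same hypotheses (compactness of $\bar K$, continuous extension of the $f_i$, Theorem \ref{thm_inter_con}, Isometry Theorem).
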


The proof of Theorem \ref{thm_asym_L_k} is given in Section \ref{sec:pf_of_asym_L_k}. According to Theorem \ref{thm_asym_L_k}, for each non-negative integer $k$, $L_k(M_n)$ are consistent estimators of $L_k(\mathcal{F},P_K)$ and they converge uniformly in probability. Thus, by Lemma \ref{lem_lower_bound} and Theorem \ref{thm_asym_L_k}, we can estimate a lower bound for the dimension of $\RegPair$ from the data matrix $M$, via looking at the values of $L_k(M_n)$. Formally, we can define the following estimator of $d_\low\RegPair$.
\begin{definition}\label{def:est_low}
For  $\epsilon>0$  and  $M\in\mathcal{M}_{m,n}^o$, we define
\begin{equation}
\hat{d}_{\low}(M,\epsilon)\od 1 + \max\{k:L_k(M)>\epsilon\}.
\end{equation}
\end{definition}
As a consequence of  Lemma \ref{lem_lower_bound} and Theorem \ref{thm_asym_L_k}, it is immediate that $\hat{d}_{\low}(M_n,\epsilon)$ is a consistent estimator, for appropriately chosen $\epsilon$.
\begin{corollary}\label{cor:d_low_estimator}
Let $\RegPair$ be a regular pair satisfying the conditions in Theorem \ref{thm_asym_L_k} and $M_n\in\mathcal{M}_{m,n}^o$ be sampled from $\RegPair$. Denote $d_\low = d_\low\RegPair$. Then, for all $0<\epsilon<L_{d_{\low}-1}\RegPair$, 
\begin{equation}
\lim_{n\to\infty} \Pr\left[\hat{d}_{\low}(M_n,\epsilon) = d_{\low}\RegPair\right] = 1.
\end{equation}
\end{corollary}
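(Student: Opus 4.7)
The approach is to reduce the event $\{\hat{d}_{\low}(M_n,\epsilon) = d_{\low}\}$ to a uniform-in-$k$ closeness event between $L_k(M_n)$ and $L_k\RegPair$, then invoke the uniform convergence provided by Theorem \ref{thm_asym_L_k}. Writing $d = d_{\low}\RegPair$ for brevity, by the very definition of $d_\low$ in \eqref{eq:d_low:definition} we know that $L_{d-1}\RegPair > 0$ and $L_k\RegPair = 0$ for every $k \geq d$. The estimator returns $d$ precisely when $d - 1 = \max\{k : L_k(M_n) > \epsilon\}$, i.e.\ when the two conditions
\[
(\mathrm{a})\ L_{d-1}(M_n) > \epsilon, \qquad (\mathrm{b})\ L_k(M_n) \leq \epsilon \text{ for all } k \geq d
\]
hold simultaneously (note that condition (a) guarantees the set $\{k : L_k(M_n) > \epsilon\}$ is non-empty, so the maximum is well defined).

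Given the hypothesis $0 < \epsilon < L_{d-1}\RegPair$, the plan is to set
\[
\delta \od \tfrac{1}{2}\min\bigl(\epsilon,\ L_{d-1}\RegPair - \epsilon\bigr) > 0
\]
and work on the event
\[
E_n \od \Bigl\{\sup_{k \geq 0} \bigl|L_k(M_n) - L_k\RegPair\bigr| < \delta\Bigr\}.
\]
On $E_n$, for each $k \geq d$ we have $L_k(M_n) < L_k\RegPair + \delta = \delta \leq \epsilon/2 < \epsilon$, giving (b); and $L_{d-1}(M_n) > L_{d-1}\RegPair - \delta \geq \epsilon + \tfrac{1}{2}(L_{d-1}\RegPair - \epsilon) > \epsilon$, giving (a). Hence $E_n \subseteq \{\hat{d}_{\low}(M_n,\epsilon) = d\}$.

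To finish, apply the uniform version of Theorem \ref{thm_asym_L_k} (stated in its footnote): $\sup_{k \geq 0}|L_k(M_n) - L_k\RegPair|$ converges to $0$ in probability, so $\Pr[E_n] \to 1$, which forces $\Pr[\hat{d}_{\low}(M_n,\epsilon) = d] \to 1$. There is no genuine obstacle here; the only point that requires a moment of care is confirming that the \emph{uniform} convergence in $k$ (not just pointwise in each $k$) is available, since condition (b) needs to hold for all $k \geq d$ at once. This is exactly what the footnoted strengthening of Theorem \ref{thm_asym_L_k} supplies, so the corollary follows directly.
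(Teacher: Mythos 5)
Your proof is correct and follows essentially the same line of reasoning as the paper's: both invoke Theorem \ref{thm_asym_L_k} (specifically the uniform-in-$k$ strengthening) to get, with high probability, $L_{d_\low-1}(M_n)>\epsilon$ and $L_k(M_n)<\epsilon$ for all $k\geq d_\low$, which forces $\hat d_\low(M_n,\epsilon)=d_\low$. If anything your write-up is a touch more careful than the paper's: you correctly note that $L_k\RegPair=0$ for all $k\geq d_\low$ by the definition of $d_\low$, whereas the paper only cites Lemma \ref{lem_lower_bound} (which covers $k\geq d\RegPair$) and elides the range $d_\low\leq k<d\RegPair$, and you make the choice of $\delta$ and the event $E_n$ fully explicit.
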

\begin{proof}
For notational simplicity, in this proof, we denote $d = d\RegPair$. By Lemma \ref{lem_lower_bound} and Theorem \ref{thm_asym_L_k}, for $k\geq d$, $L_k(M_n)\to 0$ in probability and $L_{d_\low-1}(M_n)\to L_{d_\low-1}\RegPair>0$ in probability, as $n\to\infty$, with the same rate of convergence. Since $0<\epsilon<L_{d_{\low}-1}\RegPair$, as $n\to\infty$, w.h.p., $L_{d_\low-1}(M_n)>\epsilon$ and $L_k(M_n)<\epsilon$. Therefore, w.h.p., $\hat{d}_{\low}(M_n,\epsilon) = d_{\low}$, and the result follows.
\end{proof}

From Corollary \ref{cor:d_low_estimator}, $\hat{d}_{\low}(M_n,\epsilon)$ can be used as a consistent estimator of $d_\low\RegPair$. However, we need to know how to choose an appropriate $\epsilon$ for $\hat{d}_{\low}(M_n,\epsilon)$, and hence estimation of $L_k\RegPair$ is still necessary. Therefore, in practice, we suggest one use a statistical approach estimating $L_k\RegPair$ to infer $d_\low\RegPair$, instead of using $\hat{d}_{\low}(M_n,\epsilon)$ directly. The details are discussed in Section \ref{sec:use_L_k_stat_subsampling}.

\subsection{Algorithm for $\hat{\mathcal{K}}_a$ and $L_k(M)$}
For ease of implementation, we combine Definition \ref{def_L_k_emp} and Definition \ref{def_Dowker_emp} and summarize them as algorithms for the computaion of $\hat{\mathcal{K}}_a$ and $L_k(M)$. Algorithm \ref{alg:RepFiltration} is for $\hat{\mathcal{K}}_a$.
\begin{algorithm}[H]
\caption{Computation of $\hat{\mathcal{K}}_a$}\label{alg:RepFiltration}
\begin{algorithmic}
\State{}
\State{{\bf INPUTS:}}
\State{(1) $M$: an $m\times n$ real matrix without repeated values on each row; namely, a matrix in $\mathcal{M}^o_{m,n}$.}
\State{(2) $a$: an integer in $[n]$, referring to the $a$-th column of $M$.}
\State{}
\State{{\bf OUTPUT:}}
\State{$[\hat{\mathcal{K}}_a(t)]_t$: a filtration of simplicial complexes, where $t\in\{0, 1/n, 2/n, ..., \hat{t}_{\max}(a)\}$ (see Step 2 or Denfinition \ref{def_L_k_emp} for definition of $\hat{t}_{\max}(a)$).}
\State{}
\State{{\bf STEPS:}}
\State{{\bf Step 1:} For $i\in [m]$ and $b\in [n]$, recall from Definition \ref{def_L_k_emp} the order (a positive integer) 
\[
\ord_i(M,b)\od\#(\{c\in [n]: M_{ic}\leq M_{ib}\}).
\]}
\State{{\bf Step 2:} Define $\hat{t}_{\max}(a)\od (1/n)\cdot\max_{i\in [m]} \ord_i(M, a)$.}
\State{{\bf Step 3:} Define an increasing filtration of simplicial complexes $\hat{\mathcal{K}}_a$ by 
\[
\hat{\mathcal{K}}_a\left(t\right)\od\Delta\left(\left\{\sigma_b:\sigma_b = \{i\in [m]:\ord_i(M,b)\leq\ord_i(M,a)-n (\hat{t}_{\max}(a)-t)\}\right\}_{b\in [n]}\right),
\]
where $t\in\{0, 1/n, 2/n, ..., \hat{t}_{\max}(a)\}$.}
\end{algorithmic}
\end{algorithm}

The next algorithm, Algorithm \ref{alg:Lk}, is for computing $L_k(M)$. Note that, in the algorithm, PersistenceIntervals is a function with two inputs, a filtration of simplicial complexes and a positive integer that is set to limit the dimension of the computation of persistent homology to avoid possible intractable computational complexities. As the name suggests, the output of PersistenceIntervals is the persistence intervals of the first input in dimensions less than or equal to the second input.
\begin{algorithm}[H]
\caption{Computation of $L_k(M)$}\label{alg:Lk}
\begin{algorithmic}
\State{}
\State{{\bf INPUTS:}}
\State{(1) $M$: an $m\times n$ real data matrix.}
\State{(2) $d_{\up}$: a positive integer, used to limit the dimension of the computation of persistent homology; namely, the persistent homology is only computed for dimension $0, 1, ..., d_{\up}$ to make it computationally feasible.}
\State{}
\State{{\bf OUTPUT:}}
\State{$[L_k(M)\text{ for }k = 0, 1, ..., d_{\up}]$: an array of nonnegative real numbers.}
\State{}
\State{{\bf STEPS:}}
\State{{\bf Step 1:} For $a\in [n]$, compute
\begin{align*}
\mathcal{I}_a& \od \PersistenceIntervals\left(\hat{\mathcal{K}}_a, d_{\up}\right)\od \left\{\dgm\left(H_k\left(\hat{\mathcal{K}}_a\right)\right)\text{ for } k = 0,1, ..., d_{\up}\right\}\text{, and} \\
\mathcal{L}_{a, \max}& \od \left\{\max\left\{\beta-\alpha:(\alpha,\beta)\in \dgm\left(H_k\left(\hat{\mathcal{K}}_a\right)\right)\right\}\text{ for } k = 0,1, ..., d_{\up}\right\}\\
& \od \left\{l_{\max}\left(k,\hat{\mathcal{K}}_a\right)\text{ for } k = 0, ..., d_{\up}\right\}.
\end{align*}
where $\PersistenceIntervals(\hat{\mathcal{K}}_a, d_{\up})$ computes the persistence intervals of $\hat{\mathcal{K}}_a$ in dimensions up to $d_{\up}$.}
\State{{\bf Step 2:} For $k = 0, 1, ..., d_{\up}$, compute $L_k(M)$ by 
\[
L_k(M)\od\max\{l_{\max}(k,\hat{\mathcal{K}}_a):a\in [n]\}.
\]}
\end{algorithmic}
\end{algorithm}

\subsection{How to use the algorithms under different situations}\label{sec:use_L_k_stat_subsampling}
The worst case complexity of a standard algorithm for computing the persistent homology of a 1-dimensional filtration of simplicial complexes is cubical in the number of simplices (see, e.g. Section 5.3.1 in \cite{Otter_2017} and references therein). Since each $\hat{\mathcal{K}}_a$ starts from the empty simplicial complex and ends at the full simplex $\Delta^{m-1}$, we would need to go through all faces of $\Delta^{m-1}$. However, since we limit the computation only in dimension $0, 1, ..., d_{\up}$, where $d_{\up}\leq m-1$ is pre-set, we only need to consider the $(\min\{d_{\up}+1, m-1\})$-skeleton of $\Delta^{m-1}$. Therefore, for our algorithm, the number of faces in the 1-dimensional filtration is
\[
\sum_{k = 0}^{\min\{d_{\up}+2, m\}} {m\choose k}
\]
which is $O(m^{d_{\up}+2})$. Since there are $n$ such $a\in [n]$, the worst case complexity of computing $\{L_k(M)\}_{k = 0}^{d_\up}$ is $O(n\cdot m^{3(d_{\up}+2)}) = O(n\cdot m^{3d_{\up}+6})$, which is of degree $3d_{\up}+6$ in $m$ but only linear in $n$. 

Since the algorithm is linear in $n$, even in the case when $n$ is large, as long as $m$ is not too large, the algorithm is still tractable. Moreover, to use the full power of Theorem \ref{thm_asym_L_k}, we would want $n$ to be large. In the case when $n$ is large, we may {\it subsample the points} (i.e. the columns) to see how large the variance of $L_k(M_n)$ is; this is called {\it bootstrap} in statistics. Moreover, we can implement the subsampling for different numbers of columns and get the convergence trend. 

On the other hand, to infer the dimension $d\RegPair$, we will need at least $m\geq d\RegPair+1$. Thus, we want $m$ to be not too small. However, since the computational complexity of $L_k(M_n)$ goes up in high degree order with respect to $m$, we cannot have $m$ being too large. In the case when $m$ is too large, we can overcome the computational difficulty by {\it subsampling the functions} (i.e. the rows); namely, pick randomly $m_s$, say $m_s = 10$, functions, which correspond to their respective $m_s$ rows of $M_n$ and compute the $L_k$ of the submatrix thus formed; repeat this process many times and see how the result is distributed. 

We elaborate on these two methods (subsampling points or functions) in the following two subsections. We also implement the methods for estimating the embedding dimension in their appropriate situations, plot the results and give some principles for decision making (i.e. deciding, given the the plot and $k$, whether we accept $L_k\RegPair>0$ or not).

\subsubsection{Subsample points when $n$ is sufficiently large}
In the case when $n$ is sufficiently large, say $n\geq 300$, we are allowed to subsample, say $n_s$, points (i.e. columns of $M_n$) and obtain the {\it variance information}. Moreover, letting $n_s$ go up, we can obtain further how the trend of convergence goes, which, by Theorem \ref{thm_asym_L_k}, should converge to the true $L_k\RegPair$. The technique of subsampling is called {\it bootstrap} in statistics. 

Figure \ref{fig:boxplot_L_k_M_n} is the boxplots\footnote{The {\it boxplot} of a collection of real numbers is a box together with a upper whisker and a lower whisker attached to the top and bottom of the box and possibly some dots on top of the upper whisker or below the lower whisker. From the box part, one can read out the {\it first quartile} $Q1$ (25th percentile), {\it medium} $Q2$ (50th percentile) and {\it third quartile} $Q3$ (75th percentile) which are the bottom end, line in-between, and the top end of the box. The value $Q3-Q1$ is called the {\it interquartile range (IQR)}. The lower whisker and upper whisker, resp., label the values $Q1-1.5IQR$ and $Q3+1.5IQR$, resp. Values outside of the whiskers are regarded as outliers and labelled by dots.\label{footnote:boxplot}} of $L_k(M_{n_s})$ obtained by implementing this idea under different settings of $(d,m,n)$, where $d=d\RegPair$ is the dimension of $\RegPair$, $m$ is the number of functions and $n$ is the number of data points. Here, we choose $(m, n)$ to be $(10, 350)$ throughout, where $m$ is moderate for computation and $n$ is sufficiently large for subsampling. Subsampling is repeated $100$ times for each boxplot. To compare with the result of a {\it purely random} matrix, we also generate a $10\times 350$ matrix whose entries are iid from $\Unif(0,1)$ and compute its $L_k$'s. The details of how the boxplots are generated are in the caption of Figure \ref{fig:boxplot_L_k_M_n}.

\begin{figure}[H]
  \centering
  \includegraphics[width=0.9\linewidth]{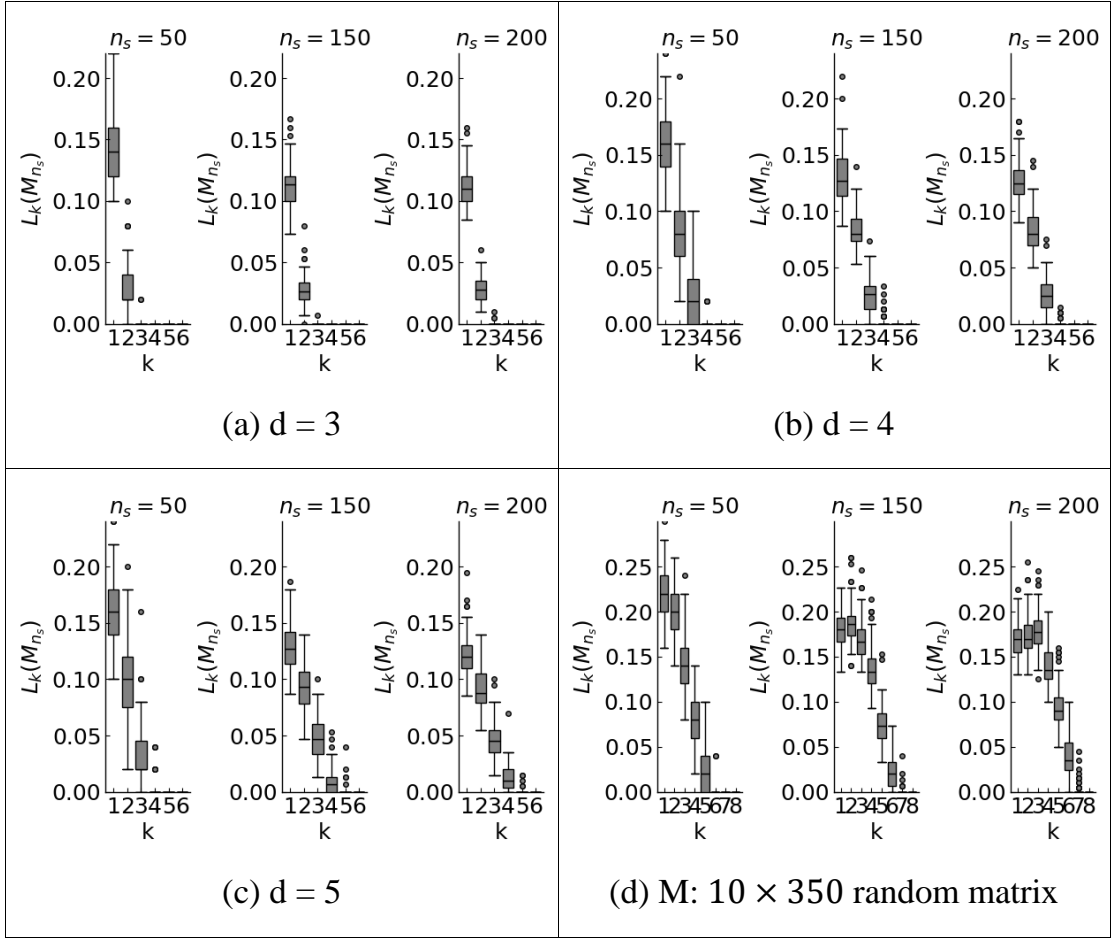}
  \caption{The four panels are boxplots of $L_k$ obtained from subsampling the points. Throughout the panels, $m = 10$ and $n = 350$, where $m$ is the number of functions and $n$ is the number of total sample poionts. The panels correspond to (a) $d=3$, (b) $d=4$, (c) $d=5$, where $d=d\RegPair$ is the dimension of $\RegPair$. The functions are chosen to be random quadratic functions defined on the unit $d$-ball in $\R^d$. Panel (d) is obtained by computing the $L_k$'s of an $m\times n$ matrix $M_n$ with entries i.i.d. from Unif(0,1), which is treated as a purely random matrix and whose main purpose is for comparison with other panels. Each figure in each panel is generated by subsampling $n_s = 50, 150, 200$ columns of $M_n$, repeated $100$ times. By the decision principle, every figure in panel (a) successfully infer their respective true dimensions; in panel (b), $n_s=50$ fails to infer the true dimension $4$ but only infers a lower bound $3$ while $n_s = 150, 200$ successfully infer the true dimension $4$; in panel (c), both $n_s = 50, 150$ fail to infer the true dimension $5$ but only infer a lower bound $4$ while $n_s = 200$ successfully infer the true dimension $5$. In panel (d), the figure has a quite different behavior.\protect\footnotemark}
  \label{fig:boxplot_L_k_M_n}
\end{figure}
\footnotetext{It can be proved that, if entries of $M_n\in\mathcal{M}_{m,n}^o$ are i.i.d. from $\Unif(0,1)$, then $L_k(M_n)$ behaves as positive for $k \leq m-2$ and $L_k(M_n)$ behaves as going to zero, for $k\geq m-1$; thus $L_k(M_n)$ relies on $m$ instead of an intrinsic $d$.}

Let us elaborate a little more on Figure \ref{fig:boxplot_L_k_M_n}. The {\bf decision principle} we propose to follow is that, 
\begin{quote}
{\it on each boxplot of $L_k(M_{n_s})$, if the first quartile\footnote{i.e. the 25th percentile} $Q1$ is not greater than $0$, reject $L_k\RegPair>0$; otherwise, accept $L_k\RegPair>0$}. 
\end{quote}

For panel (a) where $d = 3$, we can see that, as $n_s$ goes up, the variance of $L_k(M_{n_s})$ for each $k$ goes down. For $k = 2$, the first quartile of $L_k(M_{n_s})$ is greater than $0$ even for $n_s = 50$; for $k\geq 3$, $L_k(M_n)$ stays at $0$ with only some noise-like dots all the time. According to this principle, we can conclude $d\geq 3$ for this regular pair. In fact, as we know in advance, $d = 3$. 

For panel (b) where $d = 4$, the same shrinking variance behavior can be observed. Moreover, the principle concludes $d\geq 4$ after $n_s = 150$, where the first quartile starts to stay away from $0$. Similarly, for panel (c) where $d = 5$, in $n_s = 50$ and $n_s = 150$, our principle concludes $d\geq 4$ and in $n_s = 200$, it concludes $d\geq 5$. 

It is observed that, for higher $d$, we would need $n_s$ to be larger to make the best conclusion (i.e. inferring the true dimension). However, by making $n_s$ go up, the variance of $L_k(M_{n_s})$ goes down and we may also use this information. Therefore, for small sample case, one may count on this convergence behavior and develop other principles by quantifying the trend of convergence. For example, in panel (c) where $d = 5$, when $n_s$ goes up from $50$ to $150$, we observe that $L_4(M_{n_s})$ pokes out from noiselike outliers to a filled box. This trend suggests that we ``may accept" $L_4\RegPair>0$. We will leave it to the practitioners to decide their own principles on how to use the convergence trend information in their fields of interest. 

\subsubsection{Subsample functions when $m$ is large}
As we mentioned earlier, the worst case computational complexity of $L_k(M_n)$ goes up although polynomially but with degree $3 d_\up+6$ (high degree) in $m$, the number of rows of $M_n$. To overcome this difficulty, we propose to {\it subsample the rows} (i.e. the collection of functions) of $M_n$. Specifically, for a fixed number $m_s<m$, we randomly choose $m_s$ rows of $M_n$ and construct the $m_s\times n$ submatrix $M_{m_s\times n}$ accordingly, compute $L_k(M_{m_s\times n})$ and repeat the process as many times as assigned. Figure \ref{fig:subsample_function} is the boxplot of $L_k(M_{m_s\times n})$ with $m_s = 10$, repeated $N_{\rep} = 1000$ times, under different settings. Notice that throughout the plots, $m = 100$, $n = 150$, $m_s = 10$ and $N_{\rep} = 1000$. We still adopt the principle as last subsection that we only accept $L_k\RegPair>0$ when the first quartile Q1 is above $0$. Therefore, the concluding lower bounds for the plots are $2,3,4$ and $4$, resp., for panel (a), (b), (c) and (d) in Figure \ref{fig:subsample_function}.

\begin{figure}[H]
	\centering
  \includegraphics[width=0.9\linewidth]{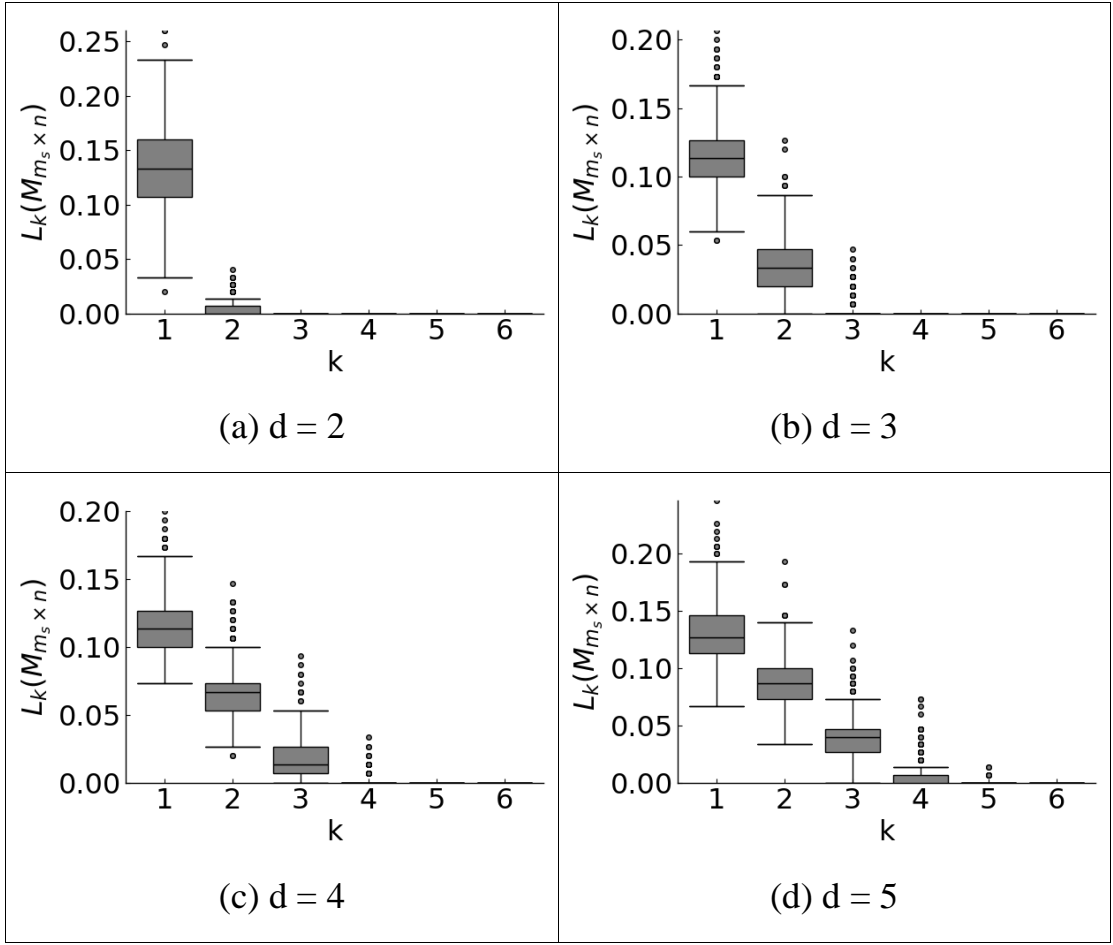}
  \caption{Boxplots of $L_k$ obtained from subsampling functions. Throughout the panels, $m = 60$, $n = 150$ and $m_s = 10$, where $m$ is the number of functions (rows of $M_n$), $n$ is the number of total sample points and $m_s$ is the number of functions used in each function subsampling. Each panel is generated under a fixed regular pair with (a) $d=2$, (b) $d=3$, (c) $d=4$, (d) $d=5$, where $d$ is the true dimension of the regular pair. Function subsampling is repeated $1000$ times for each panel.}\label{fig:subsample_function}
\end{figure}

A lower bound for $d\RegPair$ may not be very satisfactory. In Section 4 and 5, we develop some theory and methods to decide whether the lower bound obtained in this section is indeed the dimension $d\RegPair$.

 
\section{$\hat{d}_\low(M,\varepsilon)$ as an asymptotically consistent dimension estimator in the class of complete regular pairs}
\label{sec:suff_cond}
We establish in Section 3
  that a lower bound $d_\low\RegPair$ of $d\RegPair$ is generally inferable from sampled  data.  Here  we provide a sufficient condition for  $d_\low\RegPair = d\RegPair$; this ensures that   the dimension $d\RegPair$ can be inferred with high probability. Recall that 
the   {\bf conic hull}  of a  set $S\subseteq\R^d$, denoted {\bf \boldmath$\cone(S)$}, is the set 
\begin{equation}\label{eq:cone} 
\cone(S)\od\left \{\sum_{i=1}^k c_iv_i \, \, \vert \,\,  c_1, ..., c_k\geq 0,\, \, v_1, ..., v_k\in S, \, \, k\in\N\right\}.
\end{equation}

\begin{definition}\label{def:Cent_1}
Let $\RegPair$ be a regular pair, where   $\mathcal{F} =  \{f_i\}_{i\in [m]} $ and each $f_i\colon K\to \mathbb R$ is differentiable. The set 
\[
\Cent_1\od\left \{
x\in K\, \, \vert \,\,  
\cone
 \left( \{\nabla f_1(x), \dots , \nabla f_m(x)\}\right) = \R^d 
\right\},
\]
is called the    {\it type 1 central region} of $(\mathcal{F}, P_K)$. 
\end{definition}  

\begin{definition}\label{defn:complete-regular-pair} A regular pair $\RegPair$ is said to be {\bf complete} if its $ \Cent_1$ is non-empty.  
\end{definition} 

It is perhaps intuitive  (see Figure \ref{fig:dim_infer_geo_idea} on page \pageref{fig:dim_infer_geo_idea}) that, for a \emph{sufficiently nice} complete regular pair, the lower bound in Lemma \ref{lem_lower_bound}   is indeed the dimension $d\RegPair$. More precisely, 

\begin{theorem}\label{thm_suff_cond_0}
Let $\RegPair$ be a regular pair, where   $\mathcal{F} =  \{f_i\}_{i\in [m]} $ and each $f_i\colon  K\to \mathbb R$ is differentiable. If $\RegPair$ is complete, then the lower bound in Lemma \ref{lem_lower_bound} is indeed the dimension of the regular pair, i.e. $d_\low \RegPair= d\RegPair$.
\end{theorem}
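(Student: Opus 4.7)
The plan is to prove the complementary inequality $d_\low\RegPair \geq d\RegPair$; combined with Lemma \ref{lem_lower_bound} this gives the theorem. Write $d = d\RegPair$ and assume $d \geq 2$ (the case $d=1$ is analogous, using reduced $H_0$). I will produce, from any $x^* \in \Cent_1$, a persistence interval of positive length in $H_{d-1}(\mathcal{K}_{x^*})$, which forces $L_{d-1}\RegPair > 0$. Parameterize the filtration by $\tau = t_{\max}(x^*) - t$ and set $U_\tau := \bigcup_{i=1}^m K^{(i)}(T_i(x^*) - \tau)$. The nerve-lemma argument from the proof of Lemma \ref{lem_lower_bound} gives $\mathcal{K}_{x^*}(t_{\max}(x^*) - \tau) \simeq U_\tau$, so it suffices to exhibit a persistent nonzero class in $H_{d-1}(U_\tau)$ as $\tau$ decreases to $0$.

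The key geometric input from $x^* \in \Cent_1$ is the duality consequence that the polar cone of $\{\nabla f_i(x^*)\}_i$ is $\{0\}$. By compactness of the unit sphere there is $\delta_0 > 0$ with $\max_i\bigl(-\nabla f_i(x^*) \cdot v\bigr) \geq \delta_0$ for every unit vector $v$. Combined with differentiability of each $f_i$ at $x^*$, this shows that for every sufficiently small $\rho > 0$ and every $y \in S_\rho := \partial B_\rho(x^*)$, some $i$ (depending on $y$) satisfies $f_i(y) \leq f_i(x^*) - \tfrac{1}{2}\rho\delta_0$. By Lemma \ref{lem:ell_t} (and the identity $\lambda_i(T_i(x^*)) = f_i(x^*)$ which follows from (R2)), the continuous strictly increasing function $\lambda_i$ yields a $\tau^* > 0$ such that $\lambda_i(T_i(x^*) - \tau) > f_i(x^*) - \tfrac{1}{2}\rho\delta_0$ for all $i$ and all $\tau \in [0, \tau^*]$. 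Combining these, every $y \in S_\rho$ lies in some $K^{(i)}(T_i(x^*) - \tau)$, hence $S_\rho \subset U_\tau$ for all such $\tau$.

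It then suffices to show $[S_\rho] \in H_{d-1}(U_\tau)$ is nonzero for all $\tau \in [0, \tau^*]$, and that this class is carried by the natural inclusions $U_{\tau_1} \hookrightarrow U_{\tau_2}$ for $\tau_2 < \tau_1$. For the first point, observe $x^* \in V_\tau := K \setminus U_\tau$ for all $\tau \geq 0$ since $\lambda_i(T_i(x^*) - \tau) \leq f_i(x^*)$. Because $K$ is open and convex, $K$ is homeomorphic to $\R^d$, so $K \setminus \{x^*\} \simeq S^{d-1}$, and the small sphere $[S_\rho]$ is a generator of $H_{d-1}(K \setminus \{x^*\}) \cong \F$. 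The inclusion-induced map $H_{d-1}(U_\tau) \to H_{d-1}(K \setminus \{x^*\})$ sends $[S_\rho]_{U_\tau}$ to this generator, so $[S_\rho]_{U_\tau} \neq 0$. Persistence is immediate since $U_{\tau_1} \subset U_{\tau_2}$ carries $[S_\rho]$ to itself. In the filtration $\mathcal{K}_{x^*}$ this produces a persistence interval of length at least $\tau^* > 0$, so $l_{\max}(d-1, \mathcal{K}_{x^*}) > 0$, $L_{d-1}\RegPair > 0$, and $d_\low\RegPair \geq d$.

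The principal obstacle is the quantitative step of extracting a uniform $\tau^* > 0$ from the infinitesimal $\Cent_1$ condition: the argument must convert the pointwise linear-algebraic spanning property at $x^*$ into a containment statement for finite-size sublevel sets, without regularity on the quantile functions $\lambda_i$ beyond what Lemma \ref{lem:ell_t} provides. Compactness of $S_\rho$ and strict monotonicity of each $\lambda_i$ are the key ingredients that make this work; differentiability of $f_i$ is used only to convert the positive-span inequality into the $O(\rho)$ lower bound on how far $f_i(y)$ drops below $f_i(x^*)$ in the favorable direction for some $i$. A minor secondary subtlety -- detecting the missing point $x^*$ from inside $U_\tau$ globally rather than only locally -- is handled cleanly by factoring the cycle $[S_\rho]$ through $H_{d-1}(K \setminus \{x^*\})$, which sidesteps any need to analyze the global shape of $V_\tau$.
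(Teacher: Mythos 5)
Your argument is correct in its geometric core and reaches the same conclusion, but it extracts the persistence differently from the paper, and one step in that extraction is left unjustified. The shared core: from $x^*\in\Cent_1$ you use compactness of the unit sphere plus differentiability to get a uniform $\delta_0>0$ with $\min_i\langle\nabla f_i(x^*),v\rangle\le-\delta_0$, and conclude that a small punctured ball (for you, a small sphere $S_\rho$) around $x^*$ is covered by the sublevel sets; this is exactly the content of the paper's inclusion $B_\eta(x_0)\setminus\{x_0\}\subseteq\bigcup_i\Omega_i$ in the proof of Lemma \ref{lem_cent_0_local}, which the paper proves by a contradiction/subsequence argument rather than your direct quantitative one. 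Where you diverge is afterwards. The paper shows that the \emph{nerve itself is constant} for $t\in[0,\varepsilon)$ (via Lemma \ref{lem:Gap_Lemma4_5}: every nonempty intersection of the $\Omega_i$ stays nonempty under a small uniform shrinking), so the persistence interval of length $\ge\varepsilon$ in $H_{d-1}(\mathcal{K}_{x_0})$ is immediate, and the homotopy type $S^{d-1}$ is identified once, at $t=0$, via star-shapedness. You instead track an explicit cycle $[S_\rho]$ through the changing union spaces $U_\tau$ and detect its nontriviality by mapping to $H_{d-1}(K\setminus\{x^*\})\cong\F$; this detection trick is clean and nicely replaces the star-shapedness argument.

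The gap is the sentence ``the nerve-lemma argument \dots gives $\mathcal{K}_{x^*}(t_{\max}(x^*)-\tau)\simeq U_\tau$, so it suffices to exhibit a persistent nonzero class in $H_{d-1}(U_\tau)$.'' A levelwise homotopy equivalence between two filtrations does not by itself identify their persistence modules: you need the equivalences $\mathcal{K}_{x^*}(t)\to U_\tau$ to commute (up to homotopy) with the inclusion maps of the two filtrations. This is exactly what the \emph{persistent (functorial) nerve lemma} provides for good open covers, so your argument is repairable by citing it, but as written the transfer of the persistent class from the union filtration to the simplicial filtration $\hat{\mathcal{K}}$-side is asserted rather than proved. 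Note that the paper's proof is structured precisely to avoid this issue: since the nerve is literally the same simplicial complex throughout $[0,\varepsilon)$, no naturality of the nerve equivalence is needed. If you do not want to invoke the persistent nerve lemma, the cheapest patch is to add the paper's constancy step (your $S_\rho\subset U_\tau$ argument already shows each $\Omega_i$, and one can show each nonempty $\bigcap_{i\in\sigma}\Omega_i$, survives a uniform shrinking), after which your homotopy-type computation at $\tau=0$ suffices. Two further small points: $L_k$ is defined with unreduced $H_0$, so the $d=1$ case is handled by noting $L_0>0$ trivially rather than by a reduced-homology analogue; and $\lambda_i(T_i(x^*))=f_i(x^*)$ rests on the strict monotonicity of $\ell\mapsto P_K(f_i^{-1}(-\infty,\ell))$, which comes from (R1), not (R2).
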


\bigskip 
The proof is given in Section \ref{sec:proof: thm_suff_cond_0}. An immediate corollary of the above theorem and Corollary \ref{cor:d_low_estimator}, is the following 
\begin{theorem}\label{thm:d_low_estimate_true_d}
Let $\RegPair$ be a regular pair satisfying the conditions in Theorem \ref{thm_asym_L_k}, where   $\mathcal{F} =  \{f_i\}_{i\in [m]} $ and each $f_i\colon  K\to \mathbb R$ is differentiable. 
If $\RegPair$ is complete regular pair with dimension $d=d\RegPair$,  and matrices $M_n\in \mathcal{M}_{m,n}^o $ are  sampled from $\RegPair$,   then for every $\varepsilon\in\left  (0,L_{d-1}\RegPair\right)$, 
\begin{equation}
\lim_{n\to\infty} \Pr\left[\hat{d}_{\low}(M_n,\varepsilon) = d\RegPair\right] = 1.
\end{equation}
\end{theorem}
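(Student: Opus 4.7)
The plan is to derive the theorem as an immediate combination of two already-established results: Theorem \ref{thm_suff_cond_0}, which shows that for complete regular pairs the lower bound $d_\low\RegPair$ is tight and equals $d\RegPair$, and Corollary \ref{cor:d_low_estimator}, which shows that $\hat{d}_\low(M_n,\varepsilon)$ is an asymptotically consistent estimator of $d_\low\RegPair$ whenever $\varepsilon$ lies in the interval $(0, L_{d_\low - 1}\RegPair)$.

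First, using that $\RegPair$ is complete, I would invoke Theorem \ref{thm_suff_cond_0} to obtain $d_\low\RegPair = d\RegPair = d$. By the definition of $d_\low$ in \eqref{eq:d_low:definition}, this identity automatically gives $L_{d-1}\RegPair = L_{d_\low - 1}\RegPair > 0$, so the interval $(0, L_{d-1}\RegPair)$ is non-empty and the hypothesis on $\varepsilon$ is meaningful. Moreover, the condition $\varepsilon \in (0, L_{d-1}\RegPair)$ appearing in the statement matches verbatim the hypothesis $\varepsilon \in (0, L_{d_\low - 1}\RegPair)$ required by Corollary \ref{cor:d_low_estimator}.

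Next I would apply Corollary \ref{cor:d_low_estimator} directly. Its hypotheses on $\RegPair$, namely those from Theorem \ref{thm_asym_L_k}, are satisfied by assumption, so for every $\varepsilon$ in the prescribed range one obtains $\lim_{n\to\infty}\Pr[\hat{d}_\low(M_n,\varepsilon) = d_\low\RegPair] = 1$. Substituting the identification $d_\low\RegPair = d\RegPair$ from the previous step yields the claimed limit.

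The main obstacle is not in this proof itself, which is essentially a two-line syllogism, but in the two supporting statements it invokes. Theorem \ref{thm_suff_cond_0} requires a geometric argument showing that near any point of $\Cent_1$ the relevant sublevel-set cover produces a nontrivial $(d-1)$-dimensional homology class in the Dowker nerve, and Corollary \ref{cor:d_low_estimator} relies on the uniform-in-$k$ convergence of $L_k(M_n) \to L_k\RegPair$ delivered by Theorem \ref{thm_asym_L_k} (itself a consequence of the Isometry Theorem together with the Interleaving Convergence Theorem \ref{thm_inter_con}). Once those ingredients are granted, the present theorem follows without further work.
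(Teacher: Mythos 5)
Your proposal follows exactly the paper's own two-line argument: invoke Theorem~\ref{thm_suff_cond_0} to identify $d_\low\RegPair = d\RegPair$, then apply Corollary~\ref{cor:d_low_estimator} and substitute. Nothing to add.
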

In other words, $\hat{d}(\varepsilon):\mathcal{M}_{m,n}^o\to\N$ defined by $\hat{d}(\varepsilon)(M_n)\od\hat{d}_{\low}(M_n,\varepsilon)$ is an asymptotically consistent estimator in the class of complete regular pairs.

\begin{proof}
By Theorem \ref{thm_suff_cond_0}, $d_\low\RegPair = d\RegPair$. Moreover, by Corollay \ref{cor:d_low_estimator},\[
\lim_{n\to\infty} \Pr\left[\hat{d}_{\low}(M_n,\varepsilon) = d_{\low}\RegPair\right] = 1.
\]
Thus, the result follows. 
\end{proof}

\subsection{Proof of Theorem \ref{thm_suff_cond_0}}
\label{sec:proof: thm_suff_cond_0}

Recall, the following notation from Section \ref{sec:app_to_dim_infer_prob}.  Let  $\RegPair = (\{f_i\}_{i\in [m]},P_K)$ be a regular pair;    for any  $t\in [0,1]$, we denote 
$K^{(i)}(t) \od f_i^{-1}(-\infty,\lambda_i(t))$, 
where $\lambda_i(t) $ is a monotone-increasing function that satisfies $P_K(f_i^{-1}(-\infty,\lambda_i(t))) = t$.
  For any  $x\in K$, we also denote  $$T_i(x) \od P_K(f_i^{-1}(-\infty,f_i(x))).$$

\noindent Theorem \ref{thm_suff_cond_0} follows from  the following key lemma. 
\begin{lemma}\label{lem_cent_0_local} Let $(\{f_i\}_{i=1}^m, P_K)$ be a complete regular pair. Suppose  $x_0\in \Cent_1$,  then there exists $\varepsilon>0$ such that 
\begin{equation} \label{eq:nervex0}
\nerve\left(\left\{K^{(i)}(T_i(x_0)-t)\right\}_{i\in [m]}\right)\sim S^{d-1},\ \forall \  t\in [0, \varepsilon).
\end{equation} 
\end{lemma}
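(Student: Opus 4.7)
The plan is to apply the Nerve Lemma to replace the nerve by the homotopy type of the union $U_t := \bigcup_{i\in[m]} K^{(i)}(T_i(x_0)-t)$, and then identify this union with $K\setminus\{x_0\}\simeq S^{d-1}$ via a combinatorial reduction to a linearized half-space cover at $x_0$. Specifically, each $K^{(i)}(T_i(x_0)-t)$ is an open convex sublevel set of a quasi-convex function, so finite intersections are open convex, hence contractible or empty. The Nerve Lemma (as invoked in the proof of Lemma \ref{lem_lower_bound}) therefore gives $\nerve(\{K^{(i)}(T_i(x_0)-t)\}_{i\in[m]})\simeq U_t$; it suffices to show $U_t\simeq S^{d-1}$ for $t\in[0,\varepsilon)$.

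Next, I would prove the key combinatorial identity that the nerve at $t=0$ coincides with the nerve of the linearized open half-space cover $\{\tilde K^{(i)}\}_i$, where $\tilde K^{(i)}:=\{y\in K:\langle\nabla f_i(x_0), y-x_0\rangle<0\}$. The crucial step is the strict inclusion $\{f_i<f_i(x_0)\}\subseteq\tilde K^{(i)}$ for each $i$ with $\nabla f_i(x_0)\neq 0$: applying quasi-convexity along the segment from $x_0$ to $y\in\{f_i<f_i(x_0)\}$ yields the non-strict bound $\langle\nabla f_i(x_0), y-x_0\rangle\leq 0$; to rule out equality, perturb $y\mapsto y_\epsilon:=y+\epsilon\nabla f_i(x_0)$, placing $y_\epsilon$ in the open half-space $\{\langle\nabla f_i(x_0), \cdot-x_0\rangle>0\}$ where the contrapositive of the non-strict bound forces $f_i(y_\epsilon)\geq f_i(x_0)$, contradicting continuity of $f_i$ at $y$ since $f_i(y)<f_i(x_0)$. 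The reverse direction is a first-order Taylor expansion: given $v$ with $\langle\nabla f_i(x_0), v\rangle<0$ for all $i\in\sigma$, the point $x_0+\epsilon v$ lies in $\bigcap_{i\in\sigma}K^{(i)}(T_i(x_0))$ for all sufficiently small $\epsilon>0$. Hence $\sigma$ is a face of $\nerve(\{K^{(i)}(T_i(x_0))\}_i)$ iff it is a face of $\nerve(\{\tilde K^{(i)}\}_i)$.

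Then I would compute the homotopy type of the linearized nerve and extend to $t\in(0,\varepsilon)$. Since the $\tilde K^{(i)}$ are open convex with open convex intersections, the Nerve Lemma gives $\nerve(\{\tilde K^{(i)}\}_i)\simeq\bigcup_i\tilde K^{(i)}$. The complement of this union in $K$ is $\{y\in K:\langle\nabla f_i(x_0), y-x_0\rangle\geq 0 \text{ for all } i\}$, the translate to $x_0$ of the polar cone of $\{\nabla f_i(x_0)\}_i$; by the $\Cent_1$ hypothesis $\cone(\{\nabla f_i(x_0)\})=\R^d$, so this polar cone is $\{0\}$ and the complement reduces to $\{x_0\}$. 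Hence $\bigcup_i\tilde K^{(i)}=K\setminus\{x_0\}\simeq S^{d-1}$, using that $K$ is open convex in $\R^d$ and thus homeomorphic to $\R^d$. For $t\in(0,\varepsilon)$, combinatorial stability of the nerve follows: each face $\sigma$ at $t=0$ admits a witness $y_\sigma$ with $f_i(y_\sigma)<f_i(x_0)$ strictly for $i\in\sigma$, and by continuity of $\lambda_i$ (Lemma \ref{lem:ell_t}), $y_\sigma$ remains in $\bigcap_{i\in\sigma}K^{(i)}(T_i(x_0)-t)$ for small $t>0$; conversely no new faces can appear as the sublevel sets only shrink with larger $t$. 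Taking $\varepsilon$ as the minimum threshold over the finitely many subsets $\sigma\subseteq[m]$ yields the required $\varepsilon>0$.

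The main obstacle is establishing the strict inclusion in the second step: quasi-convexity alone delivers only $\langle\nabla f_i(x_0), y-x_0\rangle\leq 0$, and upgrading this to strict inequality requires both the perturbation argument and the non-degeneracy $\nabla f_i(x_0)\neq 0$ --- which is ensured for indices relevant to positive spanning by the $\Cent_1$ assumption.
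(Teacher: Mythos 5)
Your strategy shares the paper's first move --- reduce, via the Nerve Lemma, to showing $\bigcup_i\Omega_i\simeq S^{d-1}$ where $\Omega_i = K^{(i)}(T_i(x_0)) = f_i^{-1}(-\infty,f_i(x_0))$ --- but the route you take from there has a gap. Your intermediate claim, that $\nerve(\{\Omega_i\}_{i\in[m]})$ coincides with $\nerve(\{\tilde K^{(i)}\}_{i\in[m]})$ for the linearized half-space cover $\tilde K^{(i)}=\{y\in K:\langle\nabla f_i(x_0),y-x_0\rangle<0\}$, is not justified in general. If $\nabla f_i(x_0)=0$ for some $i$ while $\Omega_i\neq\varnothing$ --- for example $f_i(x,y)=(x+y)^3$ at the origin, a perfectly good differentiable quasi-convex function --- then $\tilde K^{(i)}=\varnothing$, so vertex $i$ (and possibly higher faces containing $i$) is absent from $\nerve(\{\tilde K^{(i)}\})$ yet present in $\nerve(\{\Omega_i\})$. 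Your own strict-inclusion argument explicitly requires $\nabla f_i(x_0)\neq 0$, and your closing remark dismisses this incorrectly: the hypothesis $\cone(\{\nabla f_i(x_0)\}_{i\in[m]})=\R^d$ does not force every gradient to be nonzero --- a zero gradient is simply redundant for spanning, but it is not removed from the nerve.

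The paper avoids the problem entirely by never passing to a second cover. It shows directly that $U:=\bigcup_i\Omega_i$ is a punctured open star-shaped set: since $x_0\in\bd(\Omega_i)$ for each $i$ and $\Omega_i$ is open convex, $(x_0,y]\subseteq\Omega_i$ for any $y\in\Omega_i$, so $U\cup\{x_0\}$ is star-shaped at $x_0$; then a compactness-plus-Taylor argument produces $\eta>0$ with $B_\eta(x_0)\setminus\{x_0\}\subseteq U$ (zero-gradient indices are harmless there, since $\langle\nabla f_i(x_0),v_*\rangle\geq 0$ holds trivially), which makes $U\cup\{x_0\}$ open and star-shaped, hence $U\simeq S^{d-1}$. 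If you strengthened your hypotheses to $\nabla f_i(x_0)\neq 0$ for all $i$, your argument would go through and be a pleasant alternative --- in particular your perturbation proof of $\Omega_i\subseteq\tilde K^{(i)}$ is a self-contained derivation of the quasi-convexity fact the paper only cites --- but as written the nerve-equality step is a genuine gap. Your handling of the $t\in(0,\varepsilon)$ stability via continuity of $\lambda_i$ and finiteness of $2^{[m]}$ is fine and matches the paper's use of Lemma \ref{lem:Gap_Lemma4_5}.
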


\begin{proof}[Proof of Theorem \ref{thm_suff_cond_0}] By Lemma \ref{lem_lower_bound}, $d_{\operatorname{low}}\RegPair\leq d\RegPair $.
We therefore only need to prove $L_{d-1}(\mathcal{F},P_K)>0$. Let $x_0\in\Cent_1$. By Lemma \ref{lem_cent_0_local},  $l_{\max}(d-1,x_0)>0$ (see Definition \ref{def_L_k}). Therefore, $L_{d-1}(\mathcal{F},P_K)>0$, completing the proof.
\end{proof}

\begin{proof}[Proof of Lemma \ref{lem_cent_0_local}] For each $i\in [m]$,  we denote by 
$$\Omega_i =  K^{(i)}(T_i(x_0))=\left \{ x \in K \, \vert \, f(x)<f(x_0) \right\} $$ 
the appropriate open convex sublevel set of $f_i$. Note that $ K^{(i)}(T_i(x_0)-t)\subseteq \Omega_i$ for any 
$t \geq 0$, and the nerve in the  left-hand-side of  \eqref{eq:nervex0} is a subcomplex of the    $\nerve\left(\left\{\Omega_i\right\}_{i\in [m]}\right)$.

For  each non-empty $\sigma\in \nerve\left(\left\{\Omega_i\right\}_{i\in [m]}\right)$, the   subset $\bigcap_{i\in\sigma}\Omega_i $ is open and non-empty and hence has a  nonzero $P_K$ measure. Thus\footnote{See Lemma \ref{lem:Gap_Lemma4_5} in Section \ref{App_Sec:Gap_Lemma4_5}.} there exists $\varepsilon_\sigma>0$, such that  $ \bigcap_{i\in\sigma} K^{(i)}(T_i(x_0)-t) $ is non-empty for any $t\in [0, \varepsilon_\sigma)$. Choosing $\varepsilon$ to be the minimum of all such $\varepsilon_\sigma$ thus guarantees that 
$$\nerve\left(\left\{K^{(i)}(T_i(x_0)-t)\right\}_{i\in [m]}\right)= \nerve\left(\left\{\Omega_i\right\}_{i\in [m]}\right)  \ \forall \  t\in [0, \varepsilon). $$

 It thus suffices to prove \eqref{eq:nervex0} for $t=0$. 
Since each  $\Omega_i$  is open and convex,  by the  nerve lemma\footnote{See, e.g., Theorem 10.7 in \cite{Bjorner:1996:TM:233228.233246} or Corollary 4G.3 in \cite{MR1867354}.}  it is enough  to show  that $\bigcup_{i\in [m]} \Omega_i \sim S^{d-1}$. Moreover, since  $x_0$ lies on the boundary of each $\Omega_i $, the union $  \{ x_0\} \cup \bigcup_{i\in [m]} \Omega_i $ is star-shaped.  Therefore, it suffices to prove that there exists $\eta>0$ such that 
\begin{equation} \label{eq:inclusion} 
B_\eta(x_0)\setminus\{x_0\}\subseteq \bigcup_{i\in [m]} \Omega_i, 
\end{equation} 
where $B_\eta(x_0) = \{x\in\R^d: \lVert x-x_0\rVert <\eta\}$. 

Suppose no such $\eta>0$ exists,  then, for all $n\in\N$, there exists a unit vector  $v_n\in S^{d-1}=\{ x\in \mathbb R^d \, , \, \Vert{x} \Vert =1\}$ such that $x_n=x_0+\frac{1}{n}\cdot v_n\notin \bigcup_{i\in [m]} \Omega_i$. By compactness of $S^{d-1}$, there is an infinite    subsequence $\{ v_{n_j}\}$, that converges to a particular  $v_*\in S^{d-1}$.
Since all $f_i$  are differentiable, using Taylor's theorem, we obtain
\begin{equation}\label{sphere_persist_Taylor}
n_j \left( {f_i(x_n) -f_i(x_0)} \right) 
=\langle \nabla f_i(x_0), v_{n_j}\rangle+O(1/n_j ).
\end{equation}
Since $x_n \notin \Omega_i $ for all $i$, $f_i(x_n)\geq f_i(x_0)$. Taking $\liminf_{j\to\infty}$ on both sides of equation (\ref{sphere_persist_Taylor}), we conclude that 
$
\langle \nabla f_i(x_0), v_*\rangle\geq 0, \ \forall\ i\in [m]
$. 
Since $-v_*\in \cone(\{\nabla f_i(x_0)\}_{i\in [m]}) = \R^d$,  choosing appropriate nonnegative coefficients in \eqref{eq:cone} yields  $0\leq \langle -v_*, v_*\rangle$ and hence $v_* = 0$, a contradiction. Therefore  the inclusion \eqref{eq:inclusion} holds for some $\eta>0$. 
\end{proof}

\section{Testing the completeness of $\RegPair$ from sampled  data}

 Theorem \ref{thm_suff_cond_0} establishes  that completeness of  $\RegPair$ implies $d_\low\RegPair = d\RegPair$, and thus the data dimension $d\RegPair $ can be inferred from sampled data.   Unfortunately, completeness cannot be directly tested from sampled data,  since the gradient information is not directly accessible from discrete samples. Here we consider a different notion of {\it central region},  $\Cent_0\subseteq K$, which, under some generic assumtion, is indistinguishable from $\Cent_1$ in the probability measure $P_K$ (Lemma \ref{lem_cent_inclusion}). We also establish that  the probability measure   of  $\Cent_0$  can be approximated from sampled data (Theorem \ref{thm_suff_cond_test}). This enables one  to test completeness of a regular pair from sampled data. 

\begin{definition}\label{def:Cent_0}
Let $(\mathcal{F}, P_K) $ be a regular pair, the subset  
\[
\Cent_0\od\left \{x\in K\, \vert\,  \bigcap_{i\in [m]} f_i^{-1}(-\infty,f_i(x)) = \varnothing\right \},
\]
is called the  {\it type 0 central region} of $(\mathcal{F}, P_K)$.
\end{definition}


\begin{definition}
A set of vectors $V = \{v_1, ..., v_m\}\subseteq\R^d$ is said to be {\bf in general direction} if, for every $\sigma\subseteq [m]$ with $|\sigma|\leq d$, the set of vectors $\{v_i\}_{i\in\sigma}$ is linearly independent. 
A collection of differentiable functions   $\mathcal{F} = \{f_i:K\to\R\}_{i\in [m]}$  is said to be {\bf in general position} if for (Lebesgue) almost every $x$ in $K$, the vectors  $\{\nabla f_i(x)\}_{i\in [m]}$ are in general direction.
\end{definition}

\begin{lemma}\label{lem_cent_inclusion}
Let $(\mathcal{F}, P_K)$ be a regular pair, where each function in $ \mathcal{F}$ is differentiable. Assume that $\mathcal{F}$ is in general position, then 
\begin{equation} \label{eq-lem-5.3} 
P_K(\Cent_1\setminus\Cent_0) =P_K(\Cent_0\setminus\Cent_1) = 0. \end{equation} 
\end{lemma}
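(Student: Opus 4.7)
The plan is to prove the two equalities in \eqref{eq-lem-5.3} separately, using the first-order characterization of differentiable quasi-convexity (namely, $f_i(y)\le f_i(x)\Rightarrow\langle\nabla f_i(x),y-x\rangle\le 0$) as the bridge between the analytic description of $\Cent_0$ and the gradient-cone description of $\Cent_1$.

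\textbf{Inclusion $\Cent_1\subseteq\Cent_0$, with no exception.} Suppose $x\in\Cent_1$ and $x\notin\Cent_0$: some $y\in K$ has $f_i(y)<f_i(x)$ for every $i$, and then $v:=y-x\ne 0$ satisfies $\langle\nabla f_i(x),v\rangle\le 0$ for every $i$. But the polar cone of $\cone(\{\nabla f_i(x)\})=\R^d$ is $\{0\}$, forcing $v=0$, a contradiction. Hence $\Cent_1\setminus\Cent_0=\varnothing$.

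\textbf{Inclusion $\Cent_0\setminus\Cent_1$ is $P_K$-null.} I would show $\Cent_0\setminus\Cent_1$ is contained in the set where general position fails. Let $x\in\Cent_0$ be any point at which general position holds. A first-order Taylor expansion (using that $K$ is open) rules out any $v\in\R^d$ with $\langle\nabla f_i(x),v\rangle<0$ for every $i$, since such $v$ would yield $f_i(x+tv)<f_i(x)$ for small $t>0$. By Gordan's theorem, the non-existence of such $v$ is equivalent to the existence of $\lambda_i\ge 0$, not all zero, with $\sum_i\lambda_i\nabla f_i(x)=0$; normalising gives $0\in\conv(\{\nabla f_i(x)\})$. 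Let $I=\{i:\lambda_i>0\}$. Since $\sum_{i\in I}\lambda_i\nabla f_i(x)=0$ is a nontrivial linear dependence, general position at $x$ forces $|I|\ge d+1$. Carath\'eodory's theorem (followed by passage to the support) then produces $d+1$ gradients $g_1,\dots,g_{d+1}$ and strictly positive $\mu_j$ with $\sum_{j=1}^{d+1}\mu_j g_j=0$. Any $v\in\R^d$ can be written as $v=\sum_{j=1}^d a_j g_j$, since $g_1,\dots,g_d$ span $\R^d$ by general position; adding $c\sum_j\mu_j g_j=0$ for $c\ge\max_j(-a_j/\mu_j)$ turns this into a nonnegative combination of $g_1,\dots,g_{d+1}$. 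Hence $\cone(\{\nabla f_i(x)\})=\R^d$, i.e.\ $x\in\Cent_1$. Thus every generic $x\in\Cent_0$ lies in $\Cent_1$.

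\textbf{Edge case and main obstacle.} The Carath\'eodory step tacitly requires $m\ge d+1$; for $m\le d$, $\Cent_1=\varnothing$ automatically, and I would verify $P_K(\Cent_0)=0$ directly by exhibiting a common strict descent direction $v=-\sum_i h_i$, where $\{h_j\}$ is a dual basis extending the $m\le d$ linearly independent gradients at $x$. The principal technical subtlety is the Carath\'eodory step in the second inclusion: one must invoke general position twice---first to force the support of $\lambda$ to have at least $d+1$ elements, then to extract a spanning basis from any $d$ of the resulting $d+1$ gradients---in order to promote the positive dependence $\sum_j\mu_j g_j=0$ into a certificate that $\cone(\{\nabla f_i(x)\})=\R^d$.
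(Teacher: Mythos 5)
Your argument is correct, and it takes a genuinely different route from the paper's in both halves.

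For $P_K(\Cent_1\setminus\Cent_0)=0$, the paper invokes the \emph{strict} first-order quasi-convexity inequality, which requires $\nabla f_i(x_0)\neq 0$, and therefore has to set aside the critical locus $K'=\{x:\exists\,i,\ \nabla f_i(x)=0\}$, concluding only that $\Cent_1\setminus K'\subseteq\Cent_0$ and $P_K(K')=0$. You instead use the \emph{weak} inequality $f_i(y)\leq f_i(x)\Rightarrow\langle\nabla f_i(x),y-x\rangle\leq 0$, which holds for any differentiable quasi-convex function with no nondegeneracy hypothesis, and pair it with the elementary fact that the polar of $\R^d$ is $\{0\}$. This yields the strict containment $\Cent_1\subseteq\Cent_0$, which is cleaner than a measure-zero statement, and quietly uses that a point of $\Cent_1$ cannot have all gradients vanish.

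For $P_K(\Cent_0\setminus\Cent_1)=0$, the paper proves a standalone structural lemma (vectors in general direction generate a cone that is either salient or all of $\R^d$), by induction on the number of vectors, and then shows that $\Cent_0\setminus\Cent_1$ sits inside the set where the cone is flat but proper, hence inside the non-general-direction locus. You bypass this lemma via Gordan's theorem: at a point $x\in\Cent_0$ of general position, Gordan produces a nonzero nonnegative dependence $\sum\lambda_i\nabla f_i(x)=0$, general position forces its support to have size at least $d+1$, and Carath\'eodory together with a second invocation of general position trims this to exactly $d+1$ gradients with strictly positive coefficients. The explicit positive-basis computation then shows $\cone(\{\nabla f_i(x)\})=\R^d$, i.e., $x\in\Cent_1$. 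Your route is more self-contained and computational; the paper's route isolates a reusable geometric lemma about positive dependence under general direction. Your edge case ($m\leq d$) is in fact already subsumed by the main argument, since general position plus Gordan's theorem directly rules out $\Cent_0$ for $m\leq d$; the dual-basis descent direction you exhibit is a concrete witness but not logically necessary.

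One presentational remark: the phrase ``Carath\'eodory's theorem (followed by passage to the support)'' is doing real work and deserves a sentence. The intended argument is that Carath\'eodory for convex hulls expresses $0$ as a convex combination of at most $d+1$ gradients, and dropping the zero-coefficient terms leaves a support of size at most $d+1$; general position then forbids a support of size at most $d$, so the support has size exactly $d+1$ with all coefficients strictly positive. As written, a reader could reasonably ask whether Carath\'eodory alone guarantees strict positivity, which it does not; general position is needed a second time at this step.
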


The proof is given in Section \ref{subsection5.1}. 
It can be shown that $\Cent_1$ of a regular pair is   an open set (see Lemma \ref{lem:cent_0_open}  in the Appendix).  Thus  completeness of a regular pair $\RegPair$   is equivalent to $P_K(\Cent_1)>0$. Lemma \ref{lem_cent_inclusion} ensures that  completeness of  a regular pair   in general position is equivalent to  $P_K(\Cent_0)>0$. In order to test whether $P_K(\Cent_0)>0$, one can use the following natural discretization. 

\begin{definition}\label{def_est_cent_1}
For a matrix $M\in\mathcal{M}_{m,n}^o$, the set  
\[
\widehat{\Cent}_0(M)
\od\left \{
a\in [n]:\bigcap_{i\in [m]}\left \{b\in [n]: M_{ib} <M_{i a}\right \}  = \varnothing
\right\}
\]
is called the {\it discretized central region}.
\end{definition}

If  a  matrix $M\in\mathcal{M}_{m,n}^o$ is sampled from a regular pair, then for each $a\in [n]$, the set $ \{b\in [n]: M_{ib} <M_{i a} \} $ is a  discretization of $f_i^{-1}(-\infty,f_i(x_a))$, and   $\widehat{\Cent}_0(M)$ can be thought of as an approximation of $\Cent_0$. The following theorem confirms this intuition.
 
\begin{theorem}\label{thm_suff_cond_test}
Let $M_n\in\mathcal{M}_{m,n}^o$  be sampled from a regular pair, then $\frac1n{\#(\widehat{\Cent}_0(M_n))} $ converges to $P_K(\Cent_0)$ in probability:    
\[ \text{ for all } \epsilon>0,\quad 
\lim_{n\to\infty} 
\Pr\left[\left|\frac1{n}{\#(\widehat{\Cent}_0(M_n))}-P_K(\Cent_0)\right|>\epsilon\right]= 0.
\].
\end{theorem}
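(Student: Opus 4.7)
The plan is to decompose $\tfrac{1}{n}\#\widehat{\Cent}_0(M_n)$ into a ``true-positive'' contribution from samples actually in $\Cent_0$, which is controlled by the strong law of large numbers, and a ``false-positive'' contribution from samples outside $\Cent_0$ that happen not to have any other sample beating them on every coordinate, which is controlled by a direct first-moment/dominated-convergence argument. For $a \in [n]$ write $Y_a := \mathbf{1}[a \in \widehat{\Cent}_0(M_n)]$ and for $x \in K$ write $\Omega(x) := \bigcap_{i \in [m]} f_i^{-1}(-\infty, f_i(x))$, so that $x \in \Cent_0$ iff $\Omega(x) = \varnothing$ and $a \in \widehat{\Cent}_0(M_n)$ iff no $x_b$ (with $b \neq a$) lies in $\Omega(x_a)$.

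The key observation is the pointwise monotonicity $Y_a \geq \mathbf{1}[x_a \in \Cent_0]$: if $x_a \in \Cent_0$ then $\Omega(x_a) = \varnothing$, so certainly no other sampled point can occupy it, forcing $a \in \widehat{\Cent}_0(M_n)$. This lets me write
\[
\frac{1}{n}\#\widehat{\Cent}_0(M_n) \;=\; \frac{1}{n}\sum_{a=1}^n \mathbf{1}[x_a \in \Cent_0] \;+\; Z_n,
\]
where $Z_n \geq 0$ is the empirical fraction of ``false positives''. The first sum is a mean of i.i.d. Bernoulli$(P_K(\Cent_0))$ variables, so by the SLLN it converges almost surely, and in particular in probability, to $P_K(\Cent_0)$.

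It remains to prove $Z_n \to 0$ in probability, for which I would use Markov's inequality after estimating $\mathbb{E}[Z_n]$. By symmetry and by conditioning on $x_1$,
\[
\mathbb{E}[Z_n] \;=\; \int_{K \setminus \Cent_0} \bigl(1 - P_K(\Omega(x))\bigr)^{n-1}\, dP_K(x).
\]
For any $x \in K \setminus \Cent_0$, the set $\Omega(x)$ is non-empty by definition and is open as a finite intersection of continuous sublevel sets; by regularity condition (R1), $P_K$ is equivalent to Lebesgue measure on $K$, so $P_K(\Omega(x)) > 0$. Hence the integrand tends to $0$ pointwise on $K \setminus \Cent_0$ and is dominated by $1$, and dominated convergence yields $\mathbb{E}[Z_n] \to 0$. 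Markov then gives $Z_n \to 0$ in probability, which combined with the SLLN piece finishes the proof.

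The main (and only) subtle point is the monotonicity $Y_a \geq \mathbf{1}[x_a \in \Cent_0]$, which is where the defining structure of $\Cent_0$ enters. After that, the argument is robust: in particular one needs no uniform lower bound on $P_K(\Omega(x))$ away from $\Cent_0$, which is fortunate since $P_K(\Omega(x))$ may vanish as $x$ approaches $\partial \Cent_0$.
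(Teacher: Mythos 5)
Your proof is correct, and it takes a genuinely different and more elementary route than the paper's. The paper reuses the interleaving machinery built for Theorem \ref{thm_inter_con}: it introduces maps $\tau(x)=(T_1(x),\dots,T_m(x))$ and $\hat\tau_n(x_a)=((\ord_i(M_n,a)-1)/n)_i$, shows (Lemma \ref{lem:claim0}) that $\tau^{-1}(Z_\infty)=\Cent_0$ and $\hat\tau_n^{-1}(\hat Z_n)$ indexes $\widehat{\Cent}_0(M_n)$, then sandwiches $\#\widehat{\Cent}_0(M_n)/n$ between $\#(X_n\cap\tau^{-1}(Z_\infty))/n$ and $\#(X_n\cap\tau^{-1}(Z_\infty+\delta))/n$ using the zero-set interleaving (Corollary \ref{cor_Zero_Interleave}) and the uniform Glivenko--Cantelli estimate $|\ord_i(M_n,a)/n-T_i(x_a)|\to 0$, and finishes with monotone convergence as $\delta\searrow 0$. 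You instead observe the pointwise domination $\mathbf{1}[a\in\widehat{\Cent}_0(M_n)]\geq\mathbf{1}[x_a\in\Cent_0]$ (because $\Omega(x_a)=\varnothing$ trivially contains no sample point), control the true-positive term by the SLLN, and kill the nonnegative false-positive remainder by a first-moment calculation: $\E[Z_n]=\int_{K\setminus\Cent_0}(1-P_K(\Omega(x)))^{n-1}\,dP_K(x)\to 0$ by dominated convergence, since $\Omega(x)$ is open and nonempty off $\Cent_0$ and hence has positive $P_K$-measure under (R1). Your argument is self-contained (it does not rely on $Z_\infty$, $\hat Z_n$, or the CDF framework of Section \ref{sec:pf_inter_conv_thm}) and produces the explicit rate expression $\int_{K\setminus\Cent_0}(1-P_K(\Omega(x)))^{n-1}dP_K$, which the paper's proof does not expose; the paper's route, on the other hand, is uniform across the objects built for Theorem \ref{thm_inter_con} and therefore fits the narrative of the appendix. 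One small thing worth recording if you were to write this up: measurability of $x\mapsto P_K(\Omega(x))$ and of $\Cent_0$ is needed for the Fubini/conditioning step and for the LLN on $\mathbf{1}[x_a\in\Cent_0]$; this holds because $P_K(\Omega(x))=R_\infty(T_1(x),\dots,T_m(x))$ is continuous in $x$ (Lemmas \ref{lem:ell_t} and \ref{lem_R_infty_unif_cont}), so $\Cent_0=\{x:P_K(\Omega(x))=0\}$ is closed.
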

The proof  involves technicalities used in proving the Interleaving Convergence Theorem (Theorem \ref{thm_inter_con}) and is given in Section \ref{sec:append_proof_suff_con} in the Appendix.
Theorem \ref{thm_suff_cond_test} establishes that  $\frac1{n}{\#(\widehat{\Cent}_0(M_n))}$ serves as an  approximation of $P_K(\Cent_0)$,  and thus enables one to  to test whether $P_K(\Cent_0)>0$. Thus, by Lemma \ref{lem_cent_inclusion},  this provides a way to test the completeness of  the underlying regular pair $\RegPair$.

\subsection{Proof of Lemma \ref{lem_cent_inclusion}}
\label{subsection5.1}
\noindent First we prove the first part of Lemma \ref{lem_cent_inclusion}.

\begin{lemma} \label{lem56} 
Let  $(\mathcal{F},P_K) = (\{f_i\}_{i\in [m]},P_K)$ be a regular pair, where each function in $ \mathcal{F}$ is differentiable. Assume that $\mathcal{F}$ is in general position, then  $P_K(\Cent_1\setminus \Cent_0)=0$.
\end{lemma}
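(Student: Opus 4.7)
The plan is to show that any $x \in \Cent_1 \setminus \Cent_0$ must contradict $\cone(\{\nabla f_i(x)\}_i) = \R^d$, once we have discarded a $P_K$-null set on which the general position hypothesis fails. Taking $\sigma = \{i\}$ inside the definition of ``in general direction'' forces $\nabla f_i(x) \neq 0$, so the set $N := \{x \in K : \nabla f_j(x) = 0 \text{ for some } j\}$ is contained in the failure locus of general position and therefore has Lebesgue measure zero; by (R1), $P_K(N) = 0$. It is thus enough to prove that $(\Cent_1 \setminus \Cent_0) \setminus N$ is empty.

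Fix $x \in \Cent_1 \setminus N$ and suppose for contradiction that $x \notin \Cent_0$. Then there exists $y \in K$ with $f_i(y) < f_i(x)$ for every $i$, and by continuity an open ball $U \subset K$ around $y$ on which all these strict inequalities persist. For each $y' \in U$, set $v = y' - x$; applying quasi-convexity of $f_i$ along the segment $[x, y']$ to deduce $f_i(x + tv) \leq f_i(x)$ for $t \in [0,1]$, then differentiating at $t = 0^+$, yields $\langle \nabla f_i(x), v \rangle \leq 0$ for all $i$. Hence the open set $V := U - x \subset \R^d$ lies inside the closed polyhedral cone $\bigcap_i \{v : \langle \nabla f_i(x), v\rangle \leq 0\}$.

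The crux is to promote these weak inequalities to a simultaneous strict one: I claim there exists $v \in V$ with $\langle \nabla f_i(x), v\rangle < 0$ for every $i$. If no such $v$ existed, then $V$ would be covered by the finite union $\bigcup_i H_i$ of hyperplanes $H_i := (\nabla f_i(x))^\perp$, which is impossible for a nonempty open subset of $\R^d$ since each $H_i$ has empty interior (this is precisely where $\nabla f_i(x) \neq 0$ is used). For such a $v$, since $x \in \Cent_1$ we may write $-\nabla f_1(x) = \sum_i c_i \nabla f_i(x)$ with $c_i \geq 0$; rearranging gives a nontrivial nonnegative combination $(1 + c_1) \nabla f_1(x) + \sum_{i \geq 2} c_i \nabla f_i(x) = 0$, and taking the inner product with $v$ produces a strictly negative quantity equal to zero, the desired contradiction. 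I expect this strict-inequality step to be the main obstacle, since it is where the genericity of the gradients genuinely enters; the rest is bookkeeping with the differentiable characterization of quasi-convexity and a standard alternative-type argument about cones.
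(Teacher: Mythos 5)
Your proof is correct, and it follows the same high-level decomposition as the paper's: discard the Lebesgue-null set where some $\nabla f_i$ vanishes (which is where general position enters), then show the remainder of $\Cent_1$ lies in $\Cent_0$. Where you diverge is in the middle step. The paper invokes a result from convex analysis (Theorem 3.2.3 of the cited reference on generalized convexity) asserting that for a differentiable quasi-convex $f$ with $\nabla f(x_0) \neq 0$, $f(x) < f(x_0)$ already forces the \emph{strict} inequality $\langle \nabla f(x_0), x - x_0 \rangle < 0$; the empty intersection then drops out immediately from the conic representation of $x - x_0$. You instead derive only the routine weak inequality $\langle \nabla f_i(x), v \rangle \leq 0$ by the one-variable difference-quotient argument, and then recover strictness by a separate step: since these weak inequalities hold on the whole open set $U - x$, and a nonempty open set cannot be covered by the finite union of proper hyperplanes $(\nabla f_i(x))^\perp$, some $v$ must make all inequalities strict simultaneously. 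Your route is more self-contained (it needs only the elementary differential consequence of quasi-convexity, not the sharper theorem), at the modest cost of carrying along the open ball $U$ and the Baire-type covering observation. Both arguments then close identically by pairing the strict inequalities against a nonnegative conic representation coming from $x \in \Cent_1$.
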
 
\begin{proof} Let   $K^\prime=\{ x\in K\,  \vert \, \exists\  i, \,\nabla f_i(x)= 0 \} $ 
denote the union of critical points of functions in $\mathcal{F}$. Since $\mathcal{F}$ is in general position, $K'$ has Lebesgue measure zero. Assume $x_0\in\Cent_1\setminus K^\prime$,  and thus 
\begin{equation} \label{eq:C1}
\forall u\in \mathbb R^d  \quad \exists\ c_1,\dots,c_m \geq 0 \text{ such that } 
u=\sum_{i=1}^m c_i \nabla f_i(x_0).
\end{equation}  
It can be easily shown, see e.g. Theorem 3.2.3 in \cite{GeneralConvexityBook}, that if  $f$ is differentiable and quasi-convex on an open convex $K$ with $\nabla f(x_0)\neq 0$, then $f(x)<f(x_0)$ implies $\left \langle \nabla f(x_0),x-x_0\right \rangle<0$.
Thus
\[
\bigcap_{i\in [m]} f_i^{-1}(-\infty,f_i(x_0))\subseteq \bigcap_{i\in [m]} \left \{x \,\vert \,   \left \langle   \nabla f_i(x_0),x-x_0 \right \rangle <0\right \} =\varnothing,
\]
where the last equality follows from  \eqref{eq:C1}, as one can chose $u=x-x_0$. This implies $x_0\in\Cent_0$. Therefore, $\Cent_1\setminus K'\subseteq\Cent_0$ and $P_K(\Cent_1\setminus\Cent_0)\leq P_K(K') = 0$. 
\end{proof}

\medskip 
 To prove the second half of Lemma  \ref{lem_cent_inclusion}, we first recall 
 that a convex cone $\mathcal{C}  \subseteq \mathbb R^d$ is called {\bf flat} if there exists $w\neq 0$ such that both $w\in \mathcal{C}$ and $-w\in \mathcal{C} $. Otherwise, it is called {\bf salient}. If a convex cone $\mathcal{C}$ is closed and salient, then there exists\footnote{Salient cones are also called  {\it pointed cones}. It is well-known (see, e.g.,  Section 2.6.1 in \cite{Boyd04convexoptimization}) that if $\mathcal{C}\subset\R^d$ is a closed salient cone, then its dual cone $\mathcal{C}^*\od\{w\in\R^d:\langle w,u\rangle\geq 0,\ \forall\ u\in \mathcal{C}\}$ has nonempty interior. Consider $-\mathcal{C}^*=\{w\in\R^d:\langle w,u\rangle\leq 0,\ \forall\ u\in \mathcal{C}\}=\{w\in\R^d:\langle w,u\rangle< 0,\ \forall\ u\in \mathcal{C}\}\cup\{w\in\R^d:\langle w,u\rangle= 0,\ \forall\ u\in \mathcal{C}\}$. If $\mathcal{C}$ is closed and salient, then $-\mathcal{C}^*$ has nonempty interior. Note that, if $d>0$, then $\{w\in\R^d:\langle w,u\rangle= 0,\ \forall\ u\in \mathcal C\}$ has measure $0$ and hence $\{w\in\R^d:\langle w,u\rangle< 0,\ \forall\ u\in \mathcal{C}\}$ is nonempty and any vector in it satisfies the wanted property.}
  $w\in  \mathbb R^d$ such that $\langle u, w\rangle<0$,  for all non-zero $ u\in \mathcal{C}$. 

\begin{lemma} \label{lem_basic_containment}
Let $(\mathcal{F},P_K) = (\{f_i\}_{i\in [m]},P_K)$ be a regular pair, where each $f_i$ is differentiable, then 
\begin{equation}  \label{eq:c0:minus:c1} 
\Cent_0\setminus\Cent_1 \subseteq \left \{x\in K:\cone\left(\left\{\nabla f_i(x)\right\}_{i\in [m]}\right)\text{ is flat but not $\R^d$}\right\}.
\end{equation} 
\end{lemma}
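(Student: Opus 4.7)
The plan is to argue by contrapositive: I will show that for any $x_0 \in K$ whose cone $\mathcal{C}_{x_0} := \cone(\{\nabla f_i(x_0)\}_{i\in [m]})$ either equals $\R^d$ or is salient, $x_0$ lies in $\Cent_1$ or fails to lie in $\Cent_0$, respectively. The first case is immediate from the definition of $\Cent_1$, so the real work is to show that a salient cone $\mathcal{C}_{x_0}$ forces $x_0\notin\Cent_0$, i.e., that the intersection $\bigcap_{i\in [m]} f_i^{-1}(-\infty,f_i(x_0))$ is non-empty. The structure of the argument parallels the sphere-persistence argument of Lemma \ref{lem_cent_0_local}, but exploits salience of the cone rather than its equality with $\R^d$.

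To produce a point in this intersection, the strategy is to construct a common descent direction $w$ for all the $f_i$ at $x_0$ and move slightly along it. The cone $\mathcal{C}_{x_0}$ is finitely generated and therefore closed; applied to our salient $\mathcal{C}_{x_0}$, the duality fact recalled immediately before the lemma statement yields $w\in\R^d\setminus\{0\}$ with $\langle u,w\rangle<0$ for every nonzero $u\in\mathcal{C}_{x_0}$. In particular $\langle\nabla f_i(x_0),w\rangle<0$ for every $i$ with $\nabla f_i(x_0)\neq 0$.

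Differentiability of each $f_i$ together with Taylor's theorem (exactly as in the displayed expansion in the proof of Lemma \ref{lem_cent_0_local}) then gives
\[
f_i(x_0+tw) \;=\; f_i(x_0) + t\,\langle\nabla f_i(x_0),w\rangle + o(t) \;<\; f_i(x_0)
\]
for all sufficiently small $t>0$ whenever $\nabla f_i(x_0)\neq 0$. Choosing $t>0$ small enough that this holds simultaneously for every such $i$ delivers $x_0+tw\in\bigcap_i f_i^{-1}(-\infty,f_i(x_0))$, contradicting $x_0\in\Cent_0$ and finishing the argument.

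The step I expect to be the main obstacle is the handling of indices $i$ with $\nabla f_i(x_0)=0$: for these the duality inequality $\langle u,w\rangle<0$ gives no information about whether $f_i$ decreases along $w$, and one cannot even appeal to the differentiable quasi-convex gradient inequality (Theorem 3.2.3 of \cite{GeneralConvexityBook}) used in Lemma \ref{lem56}, since that inequality is vacuous when the gradient vanishes. This is precisely why the companion Lemma \ref{lem_cent_inclusion}, where the containment of Lemma \ref{lem_basic_containment} is invoked, requires $\mathcal{F}$ to be in general position — under this hypothesis the set of $x_0$ at which some $\nabla f_i(x_0)=0$ has Lebesgue (hence $P_K$) measure zero, so one can safely apply Lemma \ref{lem_basic_containment} off this exceptional null set without ever confronting the degenerate case.
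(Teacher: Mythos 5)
Your argument is essentially the paper's own proof, phrased as a contrapositive rather than a contradiction: both take $w$ dual to the salient, finitely-generated (hence closed) cone and use differentiability along $w$ to land a point of $\bigcap_{i\in[m]} f_i^{-1}(-\infty,f_i(x_0))$. Your direct Taylor step, choosing a single $t>0$ small enough to work for all $i$ simultaneously, is in fact slightly cleaner than the paper's, which picks separate $\alpha_i$ with $f_i(x_0+\alpha_i w)<f_i(x_0)$ and then passes to $\min_i\alpha_i$ --- a step that silently needs an extra quasi-convexity observation to justify shrinking each $\alpha_i$.

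The obstruction you flag for indices with $\nabla f_i(x_0)=0$ is genuine and is present, silently, in the paper's proof as well: the assertion ``$\langle\nabla f_i(x_0),w\rangle<0$ for all $i\in[m]$'' does not follow from the duality fact when some gradient vanishes, since that fact only concerns \emph{nonzero} elements of the cone. Indeed the containment \eqref{eq:c0:minus:c1} literally fails at such points: take $d=m=1$, $K=(-1,1)$, $f_1(x)=x^2$; then $0\in\Cent_0\setminus\Cent_1$, yet $\cone(\{\nabla f_1(0)\})=\{0\}$ is salient, not flat. You are right that Lemma \ref{lem_cent_inclusion} is unaffected, since general position makes the set where some gradient vanishes Lebesgue-null; but the clean fix is to add to Lemma \ref{lem_basic_containment} the hypothesis that $\nabla f_i(x_0)\neq 0$ for all $i$, which is what your proof (and the paper's) actually establishes.
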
 
\begin{proof} Let $x_0\in\Cent_0\setminus\Cent_1$. Denote 
 $$\mathcal{C}_0 \od \cone\left (\left \{\nabla f_i(x_0)\right \}_{i\in [m]}\right).$$ 
 Since $x_0\notin\Cent_1$, $\mathcal{C}_0  \neq\R^d$. Thus, it suffices to prove
that the  cone $\mathcal{C}_0$  is flat.  Suppose that the cone $\mathcal{C}_0 $ is not flat, then there exists $w$ such that $\langle u, w\rangle<0$,  for all non-zero $ u\in \mathcal{C}_0$. In particular, $\langle \nabla f_i(x_0), w\rangle <0,\ \forall\ i\in [m]$. 
Let us show  that  $\forall\ i\in [m]$, there exists $\alpha_i>0$ such that $x_0+\alpha_i w\in f_i^{-1}(-\infty,f_i(x_0))$. Suppose not, then  there exists  $i\in [m]$, such that   $f_i(x_0+\alpha w)\geq f(x_0),\ \forall\ \alpha>0$, and we have
\[
\langle \nabla f_i(x_0), w\rangle = \liminf_{\alpha\to 0^+} \frac{f(x_0+\alpha w)-f(x_0)}{\alpha}\geq 0,
\]
which is a contradiction. Thus, such positive  $\alpha_i$'s exist, and   we obtain that 
$$x_0+ \left( \min_{i\in [m]} \alpha_i \right)  w \in \bigcap_{i\in [m]} f_i^{-1}(-\infty,f_i(x_0)).$$ 
This contradicts the assumption that  $x_0\in\Cent_0$. Therefore the cone  $\mathcal{C}_0$ is flat. 
\end{proof}
It can be shown   that the inclusion in \eqref{eq:c0:minus:c1} is in fact an equality. However, since we do not need the equality here, it was  left out  the proof. 
To finish the proof of Lemma \ref{lem_cent_inclusion},  we use the following 
\begin{lemma}\label{General-Direction-Lemma}
Let $V = \{v_1, ..., v_m\}\subset \R^d$ be a set of vectors in general direction, then  $\cone(V)=\R^d$ or $\cone(V)$ is salient. 
\end{lemma}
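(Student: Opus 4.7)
The plan is to argue by contraposition: suppose $\cone(V)$ is flat, and show that, combined with the general direction hypothesis, this forces $\cone(V)=\R^d$.

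First I would unpack flatness. By definition, there exists $w\neq 0$ with $w,-w\in\cone(V)$, so we may write $w=\sum_i c_i v_i$ and $-w=\sum_i d_i v_i$ with $c_i,d_i\ge 0$. Adding gives $\sum_i (c_i+d_i)v_i = 0$, a nonnegative linear combination of the $v_i$ that equals zero. Let $S=\{i: c_i+d_i>0\}$. Since $w\neq 0$, $S$ is nonempty, and $\sum_{i\in S}(c_i+d_i)v_i=0$ is a nontrivial \emph{strictly positive} dependence among $\{v_i\}_{i\in S}$. In particular this set of vectors is linearly dependent, so the general direction hypothesis (every subset of size $\le d$ is linearly independent) forces $|S|\ge d+1$.

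Next I would use the strictly positive dependence to show $\cone(\{v_i\}_{i\in S})=\R^d$, which of course implies $\cone(V)=\R^d$. Since $|S|\ge d+1>d$, any $d$-subset of $\{v_i\}_{i\in S}$ is linearly independent by the general direction assumption and hence a basis of $\R^d$; in particular $\{v_i\}_{i\in S}$ spans $\R^d$. Given any $u\in\R^d$, expand $u=\sum_{i\in S}a_i v_i$ with real coefficients $a_i$. Set $\tilde a_i(t)\od a_i + t(c_i+d_i)$. Because $c_i+d_i>0$ for all $i\in S$, we can choose $t>0$ large enough so that $\tilde a_i(t)\ge 0$ for every $i\in S$; then $u=\sum_{i\in S}\tilde a_i(t) v_i$ exhibits $u$ as an element of $\cone(\{v_i\}_{i\in S})\subseteq \cone(V)$. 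Hence $\cone(V)=\R^d$, contradicting the assumption that $\cone(V)$ was flat but not equal to $\R^d$.

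The only subtle step is the one just used: going from a strict positive dependence $\sum_{i\in S}\lambda_i v_i=0$ ($\lambda_i>0$) plus spanning to $\cone(\{v_i\}_{i\in S})=\R^d$. I do not expect this to be a real obstacle, since the translation trick above (shifting by a large multiple of the zero-combination) handles it in one line. The conceptual heart of the argument is simply that general direction forbids short positive dependences, so any flatness must come from a dependence involving at least $d+1$ vectors, at which point the same dependence is rich enough to sweep out all of $\R^d$.
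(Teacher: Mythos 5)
Your proof is correct, and it takes a genuinely different route from the paper. The paper proceeds by induction on $|V|$ and leans on an external characterization of when $\cone(V)=\sp(V)$ (Lemma \ref{thm_flat_cone_equivalence}, cited from \cite{Regis2016}): assuming $\cone(V)$ is flat but $\neq\R^d$, it extracts an index $j$ with $-v_j\notin\cone(V\setminus\{v_j\})$, shows $a_j+b_j>0$ via the inductive hypothesis applied to $V\setminus\{v_j\}$, and derives a contradiction. Your argument instead works directly: you add the two nonnegative representations of $w$ and $-w$ to get a nontrivial nonnegative dependence $\sum_{i\in S}(c_i+d_i)v_i=0$ with all $c_i+d_i>0$ on the support $S$, invoke general direction to force $|S|\geq d+1$ (so $\{v_i\}_{i\in S}$ spans), and then use the ``shift by a large multiple of the zero combination'' trick to turn an arbitrary representation $u=\sum_{i\in S}a_iv_i$ into a nonnegative one. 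This is essentially the standard observation that a strictly positive dependence among spanning vectors forces the cone to equal the span, and it eliminates both the induction and the cited lemma; the cost is zero, since the trick is a one-liner. If anything, your version is cleaner and more self-contained; the paper's version makes the structural lemma on flat cones explicit, which may be of independent interest in their exposition but is not needed.
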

 
To prove  Lemma \ref{General-Direction-Lemma}, we  use the following lemma. 
\begin{lemma}[see e.g. Theorem 2.5 in \cite{Regis2016}]\label{thm_flat_cone_equivalence}
Let   $V = \{v_1, ..., v_m\}$ be a set of non-zero vectors in $\R^d$,    then the following two statements are equivalent:
\begin{itemize}
\item[(i)] $\cone(V)=\sp(V)$;
\item[(ii)] For each $i\in [m]$, $-v_i\in\cone(V\setminus\{v_i\})$. 
\end{itemize}
\end{lemma}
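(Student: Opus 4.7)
The plan is to prove the contrapositive-flavored dichotomy directly: assume $V$ is in general direction and that $\cone(V)$ is flat, and then show $\cone(V) = \R^d$. The key intermediate step is to extract from the flatness hypothesis a strictly positive linear relation among sufficiently many vectors of $V$, and then combine this relation with the basis structure guaranteed by the general-direction hypothesis.

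First I would use flatness to produce a positive relation. Flatness means there is $w \neq 0$ with both $w$ and $-w$ in $\cone(V)$. Write $w = \sum_{i} a_i v_i$ and $-w = \sum_{i} b_i v_i$ with $a_i, b_i \geq 0$, and add to get $0 = \sum_{i=1}^m (a_i+b_i) v_i$. Setting $S = \{i : a_i + b_i > 0\}$ and $c_i = a_i + b_i$ for $i \in S$, we obtain
\begin{equation*}
\sum_{i \in S} c_i v_i = 0, \qquad c_i > 0 \text{ for all } i \in S, \qquad S \neq \varnothing.
\end{equation*}

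Next I would argue that $|S| \geq d+1$. Indeed, if $|S| \leq d$, then by the general-direction hypothesis the vectors $\{v_i\}_{i \in S}$ would be linearly independent, contradicting the nontrivial relation with positive coefficients above. So $|S| \geq d+1$, and in particular we may choose a subset $T \subseteq S$ with $|T| = d$, and $S \setminus T$ is nonempty.

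Finally I would show $\cone(V) = \R^d$ by the following perturbation argument. Fix $u \in \R^d$. By general direction, $\{v_i\}_{i \in T}$ is a basis of $\R^d$, so there exist $\alpha_i \in \R$ with $u = \sum_{i \in T} \alpha_i v_i$. For any $\lambda > 0$, adding $\lambda$ times the zero relation gives
\begin{equation*}
u = \sum_{i \in T} (\alpha_i + \lambda c_i)\, v_i + \sum_{i \in S \setminus T} \lambda c_i\, v_i.
\end{equation*}
Choosing $\lambda$ large enough that $\alpha_i + \lambda c_i > 0$ for every $i \in T$ (possible since each $c_i > 0$) makes every coefficient strictly positive, so $u \in \cone(V)$. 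Hence $\cone(V) = \R^d$, completing the proof of the dichotomy.

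The argument is short and the only real obstacle is recognizing that one should extract from flatness a single positive relation among a subset of size at least $d+1$, and then exploit the basis property on any $d$-element subset to absorb arbitrary real coefficients into positive ones via a large multiple of that relation; no appeal to Lemma \ref{thm_flat_cone_equivalence} is needed for this direction.
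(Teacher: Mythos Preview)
Your proposal does not address the stated lemma. The statement you were asked about is Lemma~\ref{thm_flat_cone_equivalence}: the equivalence, for an arbitrary set of nonzero vectors, between $\cone(V)=\sp(V)$ and the condition that $-v_i\in\cone(V\setminus\{v_i\})$ for every $i$. Your argument instead proves Lemma~\ref{General-Direction-Lemma} (that for vectors in \emph{general direction}, $\cone(V)$ is either all of $\R^d$ or salient). These are distinct statements; in the paper, the former is a cited tool used inside an inductive proof of the latter. Note also that the paper does not prove Lemma~\ref{thm_flat_cone_equivalence} at all --- it simply cites it from an external reference --- so there is no in-paper proof to compare against for that particular statement.

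If your intended target was in fact Lemma~\ref{General-Direction-Lemma}, then your argument is correct and takes a genuinely different route from the paper. The paper proceeds by induction on $|V|$: the base case $m\leq d$ follows from linear independence, and for the inductive step one assumes $\cone(V)$ is flat but not $\R^d$, invokes Lemma~\ref{thm_flat_cone_equivalence} to locate an index $j$ with $-v_j\notin\cone(V\setminus\{v_j\})$, and then combines the flatness relation with the inductive hypothesis applied to $V\setminus\{v_j\}$ to reach a contradiction. Your approach is more direct and more elementary: from flatness you extract a single strictly positive dependence $\sum_{i\in S} c_i v_i=0$, observe $|S|\geq d+1$ by the general-direction hypothesis, pick any $d$-element subset $T\subseteq S$ as a basis, and absorb arbitrary real coefficients on $T$ into nonnegative ones by adding a large multiple of the zero relation. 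This bypasses both the induction and the auxiliary equivalence lemma entirely; the paper's route has the advantage of isolating Lemma~\ref{thm_flat_cone_equivalence} as a reusable structural fact, while yours gets to the conclusion in one shot.
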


\begin{proof}[Proof of Lemma \ref{General-Direction-Lemma}]
For $m\leq d$, the vectors  $\{v_1, ..., v_m\}$ are  linearly independent. Suppose there exists   $w\in R^d $,  such that $w, -w\in \cone(V)$. Thus there exist $a_i, b_i\geq 0$ with 
$ w= \sum_{i=1}^m a_iv_i=-\sum_{i=1}^m b_iv_i$. 
Since the vectors $\{v_1, ..., v_m\}$ are linearly independent,  $a_i+b_i=0$ for all $i\in [m]$, and thus $w=0$.  Therefore $\cone(V)$ is salient. 

For $m>d$, we  prove by induction on  the size of $V$. Suppose the result holds for any set of $m\geq d$ vectors in general direction. Let $V = \{v_1, ..., v_{m+1}\}$ be a set of $m+1$ vectors in general direction. Since any $d$ vectors in $V$ is a  basis in $\R^d$, $\sp(V) = \R^d$. Suppose the result is false for $V$; equivalently, $\cone(V)\neq \R^d$ and $\cone(V)$ is flat.  By Lemma \ref{thm_flat_cone_equivalence}, there exists $j\in [m+1]$ such that
\begin{equation}\label{not-in-cone}
-v_j\notin\cone(V\setminus\{v_j\}).
\end{equation}
Since $\cone(V)$ is flat, there exists a nonzero $w\in \R^d$ such that  $w, -w\in \cone(V)$, and thus     $w= \sum_{i=1}^{m+1}  a_iv_i=-\sum_{i=1}^{m+1}  b_iv_i $,     with $a_i, b_i\geq 0$ for all $i$. Let us prove that $a_j +b_j>0$. If $a_j+b_j = 0$, then $a_j = b_j = 0$. Thus, $w, -w\in \cone(V\setminus\{v_j\})$ and $\cone(V\setminus\{v_j\})$ is not salient. Since $|V\setminus\{v_j\}|=m$, by the induction hypothesis, we must have $\cone(V\setminus\{v_j\})=\R^d$. However, $\cone(V)\neq\R^d$ and hence $\cone(V\setminus\{v_j\})\neq\R^d$, a contradiction. Therefore  $a_j+b_j>0$, and we can conclude  that 
\[
-v_j = \sum_{i\in [m+1]\setminus\{j\}} \left(\frac{a_i+b_i}{a_j+b_j}\right) v_i\in\cone(V\setminus\{v_j\}),
\]
contradicting to \eqref{not-in-cone}. Therefore,   the result  holds for any $V$ in general direction  of size  $|V| = m+1$. This completes the proof by induction.
\end{proof}

\bigskip 
\noindent We now finish the proof of Lemma \ref{lem_cent_inclusion}. 
\begin{proof}[Proof of Lemma \ref{lem_cent_inclusion}]
The first half of the proof of Lemma \ref{lem_cent_inclusion} is done in Lemma \ref{lem56}. To prove the second half, we combine Lemma \ref{lem_basic_containment} and Lemma \ref{General-Direction-Lemma} to
obtain  
\begin{equation}\label{eq:general_direction_inclusion}
\Cent_0\setminus\Cent_1\subseteq \left\{x\in K:\left \{\nabla f_i\left(x\right)\right\}_{i\in [m]}\text{ is not in general direction}\right\}. 
\end{equation}
Since $\{f_i\}_{i\in [m]}$ is in general position, the right hand side of (\ref{eq:general_direction_inclusion}) has measure zero, completing the proof.
\end{proof}

\section{Appendix: proofs of the main theorems and supporting lemmas}
\label{section:Appendix}
\subsection{Proof of the dimension bound in Example \ref{ex_dim_obst}}
\label{subsection:dim:obstruction} 
\begin{proof}
For any $a\leq n-1$, the point  $x_a$  is ordered the last in the sequence $s_a=(\cdots, n,a) $;   thus,  by Lemma \ref{lem:convex_hull_noncontainment} each such point  $x_a$ cannot be in the interior of the convex hull of the other points, therefore  $x_n\in \conv(x_1, \dots , x_{n-1})$.   Assume  that the embedding dimension is $d\leq n-3$,  then  by the Caratheodory’s  theorem we conclude that there exists $b\in [n-1]$, such that 
\begin{equation} 
\label{eq:x_n_in_convhull} 
x_n\in \conv(x_1, ..., \hat{x}_b, ..., x_{n-1}).
\end{equation} 
However,  by assumptions \eqref{obstruction:example} there exists a continuous quasi-convex function  $f_b$ such that  $f_b(x_a)< f_b(x_{n})$ for all $a\in [n-1]\setminus\{ b\}$, thus Lemma \ref{lem:convex_hull_noncontainment}  yields a contradiction with \eqref{eq:x_n_in_convhull}.  Therefore, the matrix is not embeddable in dimension $d\leq n-3$.

To prove that these sequences are embeddable in dimension $d=n-2$, one can place points   $x_1, ..., x_{n-1}$ to the vertices of an $(n-2)$-simplex in $\R^{n-2}$, and  place $x_{n}$ to the barycenter of that simplex. 
By construction,    $\{x_1, ..., x_{n-1}\}$ are convexly independent and we have for following convex hull relations for every $i<n$: $x_n \notin\conv \left(\left\{x_1, ..., x_{n-1}\right \}\setminus\{x_i\}\right )$ , and 
$x_i \notin\conv(\{x_1, ..., x_{n}\}\setminus\{x_i\})$.  Therefore, by Lemma \ref{lem:convex_hull_noncontainment} there exist quasi-convex continuous functions that realize the sequences in \eqref{obstruction:example}. 
\end{proof}


\subsection{Existence and Continuity of $\lambda_i(t)$ for $t\in (0,1)$}\label{AppSec:ell_i(t)}

\begin{lemma*}[Lemma \ref{lem:ell_t}]
Let $f:K\to\R$ be a continuous   function with $P_K(f^{-1}(\ell)) = 0$ for all $\ell\in\R$, where $P_K$ is a probability measure on a convex open set $K$ and $P_K$ is equivalent to the Lebesgue measure on $K$. Then there exists a unique strictly increasing continuous function $\lambda:(0,1)\to\R$ such that, for all $t\in (0,1)$, 
\begin{equation}
P_K\left(f^{-1}\left(-\infty,\lambda\left(t\right)\right)\right) = t.
\end{equation}
\end{lemma*}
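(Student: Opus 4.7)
My plan is to work with the pushforward distribution function $F(\ell) \od P_K(f^{-1}(-\infty,\ell))$ on $\R$ and take $\lambda = F^{-1}$ on $(0,1)$. The bulk of the argument then consists of verifying that $F$ is continuous, non-decreasing, surjective onto $(0,1)$, and \emph{strictly} increasing wherever $F(\ell)\in(0,1)$, so that the inverse is a genuine function with the desired properties.

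First I would check the easy monotone-convergence properties. Non-decreasingness of $F$ is immediate from $f^{-1}(-\infty,\ell_1)\subseteq f^{-1}(-\infty,\ell_2)$ whenever $\ell_1\leq\ell_2$. Continuity from the left follows from $\bigcup_{\ell<\ell_0} f^{-1}(-\infty,\ell) = f^{-1}(-\infty,\ell_0)$, while continuity from the right uses $\bigcap_{\ell>\ell_0} f^{-1}(-\infty,\ell) = f^{-1}(-\infty,\ell_0]$ together with the hypothesis $P_K(f^{-1}(\ell_0))=0$. The limits $F(-\infty)=0$ and $F(+\infty)=1$ follow from $\bigcap_\ell f^{-1}(-\infty,\ell)=\varnothing$ and $\bigcup_\ell f^{-1}(-\infty,\ell)=K$. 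By the intermediate value theorem, $F$ then attains every value in $(0,1)$.

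The crux is strict monotonicity on $F^{-1}((0,1))$, which is exactly what gives both the uniqueness of $\lambda(t)$ and the continuity of $\lambda$. Suppose for contradiction $\ell_1<\ell_2$ with $F(\ell_1)=F(\ell_2)=c\in(0,1)$. Then the open set $U \od f^{-1}((\ell_1,\ell_2))$ satisfies $P_K(U)=F(\ell_2)-F(\ell_1)-P_K(f^{-1}(\ell_1))=0$. Since $P_K$ is equivalent to the Lebesgue measure on $K$, $U$ is also Lebesgue-null; but a non-empty open subset of $K\subseteq\R^d$ contains a ball and therefore has positive Lebesgue measure, so $U=\varnothing$. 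On the other hand, $c\in(0,1)$ forces $f^{-1}(-\infty,\ell_1)$ and $f^{-1}[\ell_2,\infty)$ to be non-empty, and since $K$ is convex hence connected, the image $f(K)$ is a connected subset of $\R$ meeting both $(-\infty,\ell_1)$ and $[\ell_2,\infty)$. By connectedness $f(K)$ must meet $(\ell_1,\ell_2)$, contradicting $U=\varnothing$.

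Once strict monotonicity is established, I define $\lambda(t)$ as the unique $\ell$ with $F(\ell)=t$ for each $t\in(0,1)$; existence comes from the IVT, uniqueness from the strict monotonicity just proved. Strict monotonicity and continuity of $\lambda$ are then standard consequences of the corresponding properties of the continuous strictly increasing $F$ on $F^{-1}((0,1))$ (the inverse of a continuous strictly increasing function on a real interval is continuous and strictly increasing). The only subtle step is the strict-monotonicity argument above; the rest is bookkeeping about continuity of measures. No additional results beyond those already set up in the paper are needed.
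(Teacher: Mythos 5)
Your proof is correct and follows the same overall architecture as the paper's: define the pushforward distribution function, verify continuity from both sides (using $P_K(f^{-1}(\ell))=0$ for right-continuity), verify strict monotonicity, and invert. The one place where you diverge substantively is the strict-monotonicity step, which is indeed the crux. The paper works with the sublevel sets $U_j = f^{-1}(-\infty,\ell_j)$ and shows $U_2\setminus \mathrm{cl}(U_1)$ contains a ball; to do so it invokes openness \emph{and convexity} of $U_1$ (via the identity $\mathrm{int}(\mathrm{cl}(U_1)) = U_1$, which fails for general open sets). In other words, the paper's proof implicitly uses quasi-convexity of $f$, even though the lemma statement asks only for continuity. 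Your argument instead looks at the open set $f^{-1}((\ell_1,\ell_2))$, shows it is non-empty by connectedness of $K$ together with the intermediate value theorem, and concludes it has positive Lebesgue (hence $P_K$) measure. This sidesteps convexity entirely and is therefore both cleaner and strictly more faithful to the stated hypotheses. The trade-off is negligible: in the paper's intended applications every $f$ is quasi-convex anyway, but your version shows the lemma holds at the stated level of generality, which closes a minor gap in the printed proof.
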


\begin{proof}
Since $K$ is path-connected, by intermediate value theorem, $f(K)$ is an interval in $\R$. Define a function $p_f:f(K)\to [0,1]$ by $p_f(\ell) \od P_K(f^{-1}(-\infty,\ell))$. Rewriting Equation (\ref{eq:ell_t}) as $p_f(\lambda(t)) = t$, we note that $\lambda(t)$ (if exists) is the inverse of $p_f$, proving uniqueness of $\lambda(t)$. For the existence and continuity of $\lambda(t)$, it suffices to prove $p_f$ is continuous and strictly increasing. 

To prove $p_f$ is continuous, we prove $p_f$ is continuous from the right and from the left. Let $\ell\in f(K)$. For $\ell_n\nearrow\ell$ in $f(K)$,\footnote{Recall that a sequence $(\ell_n)_n\subset\R$ goes up to $\ell\in\R$, denoted $\ell_n\nearrow\ell$, if $\ell_n\leq\ell_{n+1}$, for all $n$, and $\lim_{n\to\infty} \ell_n = \ell$. $\ell_n\searrow\ell$ is similarly defined.} from definition, $f^{-1}(-\infty,\ell_n)\nearrow f^{-1}(-\infty,\ell)$.\footnote{Recall that a sequence of sets $(A_n)_n$ goes up to a set $A$, denoted $A_n\nearrow A$, if $A_n\subseteq A_{n+1}$, for all $n$, and $\bigcup_{n} A_n = A$. $A_n\searrow A$ is similarly defined.} Since $P_K$ is a finite measure, taking $P_K$ on both sides, we obtain $p_f(\ell_n)\nearrow p_f(\ell)$. 
Thus $p_f$ is continuous from the left. On the other hand, for $\ell_n\searrow\ell$ in $f(K)$, from definition, 
\[
\left(\bigcap_{n = 1}^\infty f^{-1}(-\infty,\ell_n)\right)\setminus f^{-1}(-\infty,\ell) = f^{-1}(\ell). 
\]
Thus $p_f(\ell_n)\searrow P_K(\bigcap_{n = 1}^\infty f^{-1}(-\infty,\ell_n))=P_K(f^{-1}(-\infty,\ell))+P_K(f^{-1}(\ell)) = p_f(\ell)+0 = p_f(\ell)$ and $p_f$ is continuous from the right. Therefore, $p_f$ is a continuous function. 

Now we turn to prove $p_f$ is strictly increasing. For $\ell_1<\ell_2$ in $f(K)$, we need to prove $p_f(\ell_1)<p_f(\ell_2)$. Let $U_1 = f^{-1}(-\infty,\ell_1)$ and $U_2 = f^{-1}(-\infty,\ell_2)$, which are open convex sets with $U_1\subseteq U_2$. Since $f(K)$ is an interval, for any $\ell\in (\ell_1,\ell_2)$, there exists $x\in K$ with $f(x) = \ell$. Thus $U_1\neq U_2$. Note that $U_2\nsubseteq cl(U_1)$; otherwise, $U_1\subseteq U_2 = int(U_2)\subseteq int(cl(U_1))=U_1$ will imply $U_1 = U_2$, where the last equality follows from openness and convexity of $U_1$. Thus, there exists $x_0\in U_2\setminus cl(U_1)$. Choose $\epsilon>0$ such that $B(x_0,\epsilon)\subseteq U_2$ but $B(x_0,\epsilon)\cap cl(U_1) = \varnothing$. Then $P_K(U_2)\geq P_K(U_1)+P_K(B(x_0,\epsilon))>P_K(U_1)$; equivalently, $p_f(\ell_2)>p_f(\ell_1)$. Hence, $p_f$ is strictly increasing. 
\end{proof}

\subsection{Proof of Interleaving Convergence Theorem\label{sec:pf_inter_conv_thm}}
The goal of this subsection is to prove Theorem \ref{thm_inter_con}, the Interleaving Convergence Theorem.
The asymptotic behavior of $\Dow(S(M))$ actually follows from the asymptotic behaviors of several building blocks of $\Dow(S(M))$. We will first define these building blocks and prove their own asymptotic theorems and then put these asymptotic theorems together to prove the Interleaving Convergence Theorem. 

We start with an object that, as will be seen, can be used to express $\DDow(\mathcal{F},P_K)$. Recall that, for $t\in [0,1]$ and $i\in [m]$, 
\[
K^{(i)}(t)\od f_i^{-1}(-\infty,\lambda_i(t))
\]
where $\lambda_i(t)\in\R$ satisfies $P_K(f_i^{-1}(-\infty,\lambda_i(t))) = t$.
\begin{definition}\label{def_R_infty}
For a regular pair $(\mathcal{F}, P_K)$, define a function $R_\infty:[0,1]^m\to [0,1]$ by 
\[
R_\infty(t_1, ..., t_m)\od P_K\left(\bigcap_{i\in [m]} K^{(i)}(t_i)\right).
\]
\end{definition}

It is easy to see that $R_\infty$ is a {\it cumulative distribution function} ({\bf CDF}). We next introduce another CDF, denoted $R_n$, which will be used as an intermediate between $\DDow(\mathcal{F},P_K)$ and $\Dow(S(M))$. 

\begin{definition}\label{def_R_n}
For a point cloud $X_n\subset K$ of size $n$, sampled from a regular pair, we define a function $R_n:[0,1]^m\to [0,1]$ by 
\[
R_n(t_1, ..., t_m)\od \frac{1}{n}\cdot \#\left(X_n\cap \left(\bigcap_{i\in [m]} K^{(i)}(t_i)\right)\right).
\]
\end{definition}

For those familiar with nonparametric statistics, it is easy to see that $R_n$ is in fact the {\it empirical cumulative distribution function} ({\bf empirical CDF}) of $R_\infty$. However, $R_n$ is still not obtainable from the $m\times n$ data matrix $M_n = [f_i(x_a)]$ since $K^{(i)}(t_i)$ is not directly accessible from $M_n$. The next definition is introduced to solve this problem by considering a {\it step-function} approximation of $K^{(i)}(t_i)$.

\begin{definition}\label{def_K_n^(i)(t)}
Let $M_n\in\mathcal{M}_{m,n}^o$ be sampled from a regular pair $(\mathcal{F},P_K)$, where $\mathcal{F} = \{f_i\}_{i\in [m]}$. For $i\in [m]$ and $t\in [0,1]$, we define 
\[
K_n^{(i)}(t) = 
\begin{cases}
f_i^{-1}(-\infty, f_i(x_a)) &\text{ if $t\in\left[\frac{\ord_i(M_n,a)-1}{n}, \frac{\ord_i(M_n,a)}{n}\right)$, $a=1, ..., n$}\\
K &\text{ if $t=1$}
\end{cases}.
\]
\end{definition}

For a pictorial illustration of $K_n^{(i)}(t)$, please refer to Figure \ref{fig:K_n^(i)(t)}. Notice that there is a subscript $n$ in $K_n^{(i)}(t)$, indicating its dependence on the sampled matrix $M_n$.

\begin{figure}
  \centering
  \includegraphics[width=0.5\linewidth]{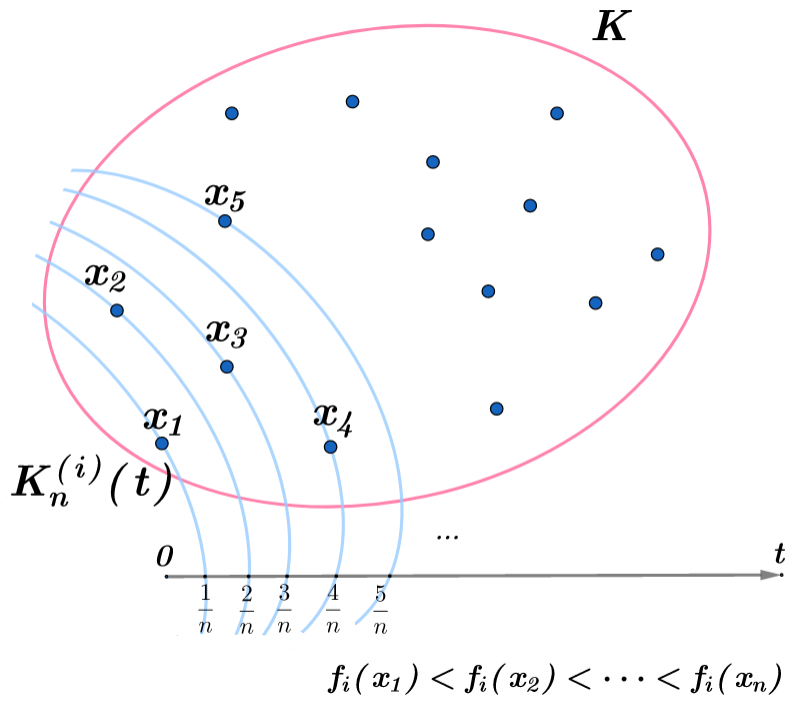}
  \caption{An illustration of $K_n^{(i)}(t)$ defined in Definition \ref{def_K_n^(i)(t)}.}	
  \label{fig:K_n^(i)(t)}
\end{figure}

The object in the next definition is obtainable solely from the data matrix $M_n$, sampled from a regular pair. 

\begin{definition}\label{def_hat_R_n}
Let $M_n=[M_{ia}]$ be an $m\times n$ data matrix, sampled from a regular pair. Define a function $\hat{R}_n:[0,1]^m\to [0,1]$ by 
\begin{align*}
\hat{R}_n(t_1, ..., t_m) &\od \frac{1}{n}\cdot \#\left(\left\{a\in [n]:\frac{\ord_i(M_n,a)}{n} \leq t_i,\ \forall\ i\in [m]\right\}\right)\\
 & = \frac{1}{n}\cdot \#\left(\bigcap_{i\in [m]}\left\{a\in [n]:\frac{\ord_i(M_n,a)}{n}\leq t_i \right\}\right).
\end{align*}
\end{definition}

In the following lemma, we rewrite $\hat{R}_n$ in a form that is similar to the definition of $R_n$, which helps build a connection between them. 
\begin{lemma}\label{lem_hat_R_n_expression}
Let $X_n = \{x_1,\cdots,x_n\}\subset K$ be a point cloud, sampled from a regular pair, and $M_n$ be the corresponding $m\times n$ data matrix. Then, for all $(t_1, ..., t_m)\in [0,1]^m$, 
\[
\hat{R}_n(t_1, ..., t_m)=\frac{1}{n}\cdot \#\left(X_n\cap\left(\bigcap_{i\in [m]} K_n^{(i)}(t_i) \right)\right).
\]
\end{lemma}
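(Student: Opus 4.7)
The plan is to unpack the definitions and reduce the identity to a pointwise equivalence. Identifying $X_n$ with $[n]$ via $a \leftrightarrow x_a$, both sides count subsets of $[n]$, so it suffices to show that for each fixed $i \in [m]$ and each $t_i \in [0,1]$,
\[
x_a \in K_n^{(i)}(t_i) \;\iff\; \frac{\ord_i(M_n,a)}{n} \leq t_i.
\]
Once this equivalence is established for every coordinate $i$, intersecting over $i$ and dividing by $n$ converts the definition of $\hat{R}_n(t_1,\ldots,t_m)$ in Definition \ref{def_hat_R_n} into the claimed expression.

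For the pointwise equivalence I would split on the two cases of Definition \ref{def_K_n^(i)(t)}. If $t_i = 1$, then $K_n^{(i)}(1) = K$ contains every $x_a$, and the order bound $\ord_i(M_n,a)/n \leq 1$ holds trivially. If $t_i \in [0,1)$, there is a unique integer $k \in \{1,\ldots,n\}$ with $t_i \in [(k-1)/n,\, k/n)$, and a unique $a^{\ast} \in [n]$ with $\ord_i(M_n,a^{\ast}) = k$; by definition $K_n^{(i)}(t_i) = f_i^{-1}(-\infty, f_i(x_{a^{\ast}}))$. Since $M_n \in \mathcal{M}_{m,n}^o$ has distinct values in row $i$, the condition $f_i(x_a) < f_i(x_{a^{\ast}})$ is equivalent to $\ord_i(M_n,a) < k$, i.e., $\ord_i(M_n,a) \leq k-1$, which gives $\ord_i(M_n,a)/n \leq (k-1)/n \leq t_i$. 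Conversely, $\ord_i(M_n,a)/n \leq t_i < k/n$ forces $\ord_i(M_n,a) < k$, since the order is a positive integer.

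This lemma is essentially a bookkeeping statement, and I do not expect any serious obstacle. The only point requiring care is the half-open interval convention in Definition \ref{def_K_n^(i)(t)}, which is exactly what makes the inequality come out non-strict on the order side and strict on the function-value side; getting this alignment right is the whole content of the proof.
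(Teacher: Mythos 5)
Your proof is correct and follows essentially the same route as the paper's: both identify the unique index (your $a^\ast$, the paper's $b$) whose order interval contains $t_i$, and then use distinctness of the row entries to convert $f_i(x_a)<f_i(x_{a^\ast})$ into $\ord_i(M_n,a)\leq k-1$, i.e.\ $\ord_i(M_n,a)/n\leq t_i$. No gaps.
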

\begin{proof}
For each $i\in [m]$ and $t_i\in [0,1)$, there exists $b\in [n]$ such that $\frac{\ord_i(M_n,b)-1}{n}\leq t_i<\frac{\ord_i(M_n,b)}{n}$. Then $K_n^{(i)}(t_i) = f_i^{-1}(-\infty,f_i(x_b))$ and 
\begin{align*}
X_n\cap K_n^{(i)}(t_i) & = \{x_a\in X_n:f_i(x_a)<f_i(x_b)\}=\{x_a\in X_n:\ord_i(M_n,a)<\ord_i(M_n,b)\} \\
 & = \{x_a\in X_n:\ord_i(M_n,a)\leq nt_i\} = \left\{x_a\in X_n:\frac{\ord_i(M_n,a)}{n}\leq t_i\right\}.
\end{align*}
For $t_i = 1$, $K_n^{(i)}(t_i)=K$ and the above equality still holds. Thus, for any $(t_1, ..., t_m)\in [0,1]^m$, 
\[
X_n\cap\left(\bigcap_{i\in [m]} K_n^{(i)}(t_i)\right) = \bigcap_{i\in [m]}\left\{x_a\in X_n:\frac{\ord_i(M_n,a)}{n}\leq t_i\right\}.
\]
By the definition of $\hat{R}_n$, the equality follows.
\end{proof}

Now the intuition behind the approximations is quite clear: since $K_n^{(i)}$ is an approximation of $K^{(i)}$, by Lemma \ref{lem_hat_R_n_expression}, $\hat{R}_n$ is an approximation of $R_n$. Therefore, $\hat{R}_n$ also approximates $R_\infty$.

Next we connect $R_\infty$ and $\hat{R}_n$ with our target objects $\Dow(S(M))$ and $\DDow(\mathcal{F}, P_K)$. For simplicity, we introduce the following convenient notations:
\begin{definition}
For $(t_1,..., t_m)\in [0,1]^m$ and $\sigma\subseteq [m]$, define
\[
t^\sigma_i\od
\begin{cases}
t_i, \text{ if $i\in\sigma$}\\
1, \text{ otherwise}.
\end{cases}
\]
\end{definition}

With these notations, we have
\begin{theorem}\label{thm_Dow_DDow_express_in_hat_R_n_R_infty}
Let $\RegPair$ be a regular pair and $M_n$ be an $m\times n$ data matrix, sampled from $\RegPair$. Then, for all $(t_1, ..., t_m)\in [0,1]^m$, we have
\begin{itemize}
\item[(i)] $\DDow(\mathcal{F}, P_K)(t_1, ..., t_m) = \{\sigma\subseteq [m]:R_\infty(t_1^\sigma, ..., t_m^\sigma)\neq 0\}$.
\item[(ii)] $\Dow(S(M_n))(t_1, ..., t_m) = \{\sigma\subseteq [m]:\hat{R}_n(t_1^\sigma, ..., t_m^\sigma)\neq 0\}$.
\end{itemize}
\end{theorem}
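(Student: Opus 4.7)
The plan is to prove both identities by unwinding the already-established nerve characterizations and exploiting the elementary observation that for each $i \notin \sigma$, setting $t_i^\sigma = 1$ replaces $K^{(i)}(t_i^\sigma)$ (resp.\ $A^{(i)}(t_i^\sigma)$) by the full ambient set $K$ (resp.\ $[n]$), so the intersection over all of $[m]$ collapses to the intersection over $\sigma$. The only nontrivial point will be, in part (i), converting ``nonempty intersection'' into ``positive $P_K$-measure,'' which is where the regularity assumption (R1) enters.

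For part (i), Definition \ref{def_Dowker} gives $\DDow(\mathcal{F}, P_K)(t_1,\dots,t_m) = \nerve(\{K^{(i)}(t_i)\}_{i\in [m]})$, so $\sigma$ lies in the complex iff $\bigcap_{i\in\sigma} K^{(i)}(t_i) \neq \varnothing$. Since $K^{(i)}(1) = K$, this intersection equals $\bigcap_{i\in [m]} K^{(i)}(t_i^\sigma)$, whose $P_K$-measure is exactly $R_\infty(t_1^\sigma,\dots,t_m^\sigma)$. It thus suffices to show that for every $\sigma$, the intersection $\bigcap_{i\in\sigma}K^{(i)}(t_i)$ is nonempty iff it has positive $P_K$-measure. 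The ``if'' direction is immediate. For ``only if,'' note each $K^{(i)}(t_i) = f_i^{-1}(-\infty, \lambda_i(t_i))$ is open in $K$ by continuity of $f_i$, so the finite intersection is open in $K$; by (R1), $P_K$ is equivalent to Lebesgue measure on $K$, and hence any nonempty open subset of $K$ has strictly positive $P_K$-measure.

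For part (ii), Lemma \ref{lem_Dow(S)_nerve} gives $\Dow(S(M_n))(t_1,\dots,t_m) = \nerve(\{A^{(i)}(t_i)\}_{i\in [m]})$ where $A^{(i)}(t) = \{a\in [n] : \#(\{b:b\leq_i a\}) \leq nt\}$. Since the total order $\leq_i$ is precisely the order read off from the $i$-th row of $M_n$, we have $\#(\{b : b \leq_i a\}) = \ord_i(M_n,a)$, so
\[
A^{(i)}(t_i) = \left\{a \in [n] : \tfrac{\ord_i(M_n,a)}{n} \leq t_i\right\},
\]
which is exactly the $i$-th factor appearing in Definition \ref{def_hat_R_n}. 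In particular $A^{(i)}(1) = [n]$, so replacing $t_i$ by $t_i^\sigma$ for $i\notin\sigma$ leaves $\bigcap_{i\in\sigma} A^{(i)}(t_i) = \bigcap_{i\in [m]} A^{(i)}(t_i^\sigma)$, whose cardinality is $n\cdot \hat{R}_n(t_1^\sigma,\dots,t_m^\sigma)$. Since these sets are finite, nonemptiness is equivalent to positive cardinality, which is equivalent to $\hat R_n(t_1^\sigma,\dots,t_m^\sigma) \neq 0$. Combining with the nerve characterization, $\sigma \in \Dow(S(M_n))(t_1,\dots,t_m)$ iff $\hat{R}_n(t_1^\sigma,\dots,t_m^\sigma) \neq 0$.

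The only ``real'' step, as anticipated, is the openness-plus-(R1) argument upgrading nonemptiness to positive $P_K$-measure in part (i); once this is in place both identities reduce to bookkeeping with the ``padding by $1$'' convention $t_i^\sigma$ and the existing nerve descriptions of $\DDow$ and $\Dow$. As a sanity check, this upgrade also ensures the right-hand sides are genuine simplicial complexes: if $\sigma \subseteq \tau$ then coordinatewise $t_i^\sigma \geq t_i^\tau$, so monotonicity of $R_\infty$ and $\hat R_n$ in each coordinate gives the required downward closure.
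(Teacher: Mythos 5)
Your proof is correct and follows essentially the same route as the paper's: reduce both sides to the nerve descriptions, use the padding convention $t_i^\sigma=1$ for $i\notin\sigma$ so that $K^{(i)}(1)=K$ and $A^{(i)}(1)=[n]$ collapse the $[m]$-intersection to the $\sigma$-intersection, and upgrade nonemptiness to positive measure via openness of sublevel sets plus (R1) (resp.\ to positive cardinality for the finite sets). The only cosmetic difference is in part (ii), where the paper detours through $K_n^{(i)}$ and Lemma \ref{lem_hat_R_n_expression} to identify $A^{(i)}(t_i)$, whereas you match $A^{(i)}(t_i)=\{a:\ord_i(M_n,a)/n\le t_i\}$ directly against Definition \ref{def_hat_R_n} --- a slight streamlining of the same argument.
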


\begin{proof}
For the first equality, recall that $\DDow(\mathcal{F}, P_K)(t_1, ..., t_m) = \nerve(\{K^{(i)}(t_i)\}_{i\in [m]})$; namely, $\sigma\in \DDow(\mathcal{F}, P_K)(t_1, ..., t_m)$ if and only if $\bigcap_{i\in\sigma} K^{(i)}(t_i)\neq\varnothing$. Since each $K^{(i)}(t_i)$ is open, $\sigma\in\DDow(\mathcal{F}, P_K)(t_1, ..., t_m)$ is also equivalent to $P_K(\bigcap_{i\in\sigma} K^{(i)}(t_i))\neq 0$. Notice that, by definition of $R_\infty$ and $t_i^\sigma$, 
\begin{align*}
R_\infty(t_1^\sigma, ..., t_m^\sigma)& = P_K\left(\bigcap_{i\in [m]} K^{(i)}(t_i^\sigma)\right)= P_K\left(\left(\bigcap_{i\in\sigma} K^{(i)}(t_i)\right)\cap\left(\bigcap_{i\in [m]\setminus\sigma} K^{(i)}(1)\right)\right) \\
& = P_K\left(\bigcap_{i\in\sigma} K^{(i)}(t_i)\right).
\end{align*}
Therefore, the first equality follows. 

For the second equality, recall, from Lemma \ref{lem_Dow(S)_nerve}, that 
\[
\Dow(S(M_n))(t_1,\cdots,t_m) = \nerve(\{A^{(i)}(t_i)\}_{i\in [m]})
\]
where $A^{(i)}(t_i) = \{a\in [n]:\frac{1}{n}\cdot\#(\{b\in [n]:b\leq_i a\})\leq t_i\}$. Thus, $\sigma\in\Dow(S(M))(t_1,\cdots,t_m)$ is equivalent to $\bigcap_{i\in\sigma} A^{(i)}(t_i)\neq\varnothing$, which is also equivalent to $\#(\bigcap_{i\in\sigma} A^{(i)}(t_i))\neq 0$ since each $A^{(i)}(t_i)$ is a finite set. Note that $A^{(i)}(t_i)$ is exactly $X_n\cap K_n^{(i)}(t_i)$. Thus, by Lemma \ref{lem_hat_R_n_expression}, 
\begin{align*}
\hat{R}_n(t_1^\sigma,\cdots,t_m^\sigma) 
&= \frac{1}{n}\cdot \#\left(X_n\cap\left(\bigcap_{i\in [m]} K_n^{(i)}(t_i^\sigma)\right)\right)
= \frac{1}{n}\cdot \#\left(\left(\bigcap_{i\in\sigma} A^{(i)}(t_i)\right)\cap \left(\bigcap_{i\in [m]\setminus\sigma} A^{(i)}(1)\right)\right)\\
& = \frac{1}{n}\cdot \#\left(\bigcap_{i\in\sigma} A^{(i)}(t_i)\right). 
\end{align*}
Hence, $\#(\bigcap_{i\in\sigma} A^{(i)}(t_i))\neq 0$ is equivalent to $\hat{R}_n(t_1^\sigma,\cdots,t_m^\sigma)\neq 0$, and the second equality follows.
\end{proof}

Before diving into asymptotic results, let us look at one useful property of $R_\infty$.

\begin{lemma}\label{lem_R_infty_unif_cont}
The map $R_\infty$ in Definition \ref{def_R_infty} is uniformly continuous. 
\end{lemma}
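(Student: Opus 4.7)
The plan is to establish the Lipschitz estimate
\[
|R_\infty(s) - R_\infty(t)| \leq \sum_{i \in [m]} |s_i - t_i| \quad \text{for all } s, t \in [0,1]^m,
\]
which gives uniform continuity (in fact something stronger) immediately. Since this bound does not even require compactness of the domain, I would not bother invoking it.

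First, I would record two elementary ingredients. For any measurable sets $A, B \subseteq K$, one has $|P_K(A) - P_K(B)| \leq P_K(A \triangle B)$ from $P_K(A) = P_K(A \cap B) + P_K(A \setminus B)$. And for any two finite collections of sets indexed by $[m]$, the set-theoretic inclusion
\[
\Bigl(\bigcap_{i \in [m]} A_i\Bigr) \triangle \Bigl(\bigcap_{i \in [m]} B_i\Bigr) \subseteq \bigcup_{i \in [m]} (A_i \triangle B_i)
\]
holds: any point in $\bigcap A_i$ but not in $\bigcap B_i$ lies in every $A_i$ yet misses some $B_{i_0}$, and is therefore in $A_{i_0} \triangle B_{i_0}$; the reverse containment is symmetric. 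Applying these with $A_i = K^{(i)}(s_i)$ and $B_i = K^{(i)}(t_i)$ yields
\[
|R_\infty(s) - R_\infty(t)| \leq \sum_{i \in [m]} P_K\bigl(K^{(i)}(s_i) \triangle K^{(i)}(t_i)\bigr).
\]

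Next, I would compute each summand explicitly. Since Lemma \ref{lem:ell_t} provides a strictly increasing $\lambda_i$, the family $\{K^{(i)}(t)\}_{t \in [0,1]}$ is totally ordered by inclusion, so for $s_i \leq t_i$ we have $K^{(i)}(s_i) \subseteq K^{(i)}(t_i)$, whence
\[
P_K\bigl(K^{(i)}(s_i) \triangle K^{(i)}(t_i)\bigr) = P_K\bigl(K^{(i)}(t_i)\bigr) - P_K\bigl(K^{(i)}(s_i)\bigr) = t_i - s_i,
\]
using the defining property $P_K(K^{(i)}(t)) = t$ from Lemma \ref{lem:ell_t} (and the boundary conventions $K^{(i)}(0) = \varnothing$, $K^{(i)}(1) = K$). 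By symmetry this gives $P_K(K^{(i)}(s_i) \triangle K^{(i)}(t_i)) = |s_i - t_i|$, and combining with the previous display produces the desired Lipschitz bound.

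There is essentially no obstacle here; the proof is mechanical once one notices the standard inequality $|P_K(A) - P_K(B)| \leq P_K(A \triangle B)$ and the symmetric-difference-of-intersections inclusion. The only place where a hypothesis of the regular pair is implicitly used is in Lemma \ref{lem:ell_t} itself (through condition (R2), which ensures $P_K(K^{(i)}(t)) = t$ can be achieved for all $t \in (0,1)$); given that lemma, the remaining estimate is purely combinatorial/measure-theoretic.
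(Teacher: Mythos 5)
Your proof is correct and follows essentially the same route as the paper: the paper also establishes the Lipschitz bound $|R_\infty(\vec t) - R_\infty(\vec t\,')| \le \sum_{i\in[m]} |t_i - t_i'|$ via the inequality $|P_K(A)-P_K(B)| \le P_K(A\ominus B)$, the inclusion of the symmetric difference of intersections into the union of symmetric differences, subadditivity, and the normalization $P_K(K^{(i)}(t)) = t$. The only cosmetic difference is that you spell out the sandwich argument for the two elementary ingredients, whereas the paper presents the chain of inequalities directly.
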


\begin{proof}
Let $(t_1, ..., t_m), (t_1', ..., t_m')\in [0,1]^m$. Denoting the symmetric difference of any two sets $A$ and $B$ by $A\ominus B\od (A\setminus B)\cup (B\setminus A)$, then 
\begin{align*}
&\left|R_\infty(t_1, ..., t_m)-R_\infty(t_1', ..., t_m')\right|
=\left| P_K\left(\bigcap_{i\in [m]} K^{(i)}(t_i)\right)-P_K\left(\bigcap_{i\in [m]} K^{(i)}(t_i')\right)\right|\\
&\leq P_K\left(\left(\bigcap_{i\in [m]} K^{(i)}(t_i)\right)\ominus\left(\bigcap_{i\in [m]} K^{(i)}(t_i')\right)\right)
\leq P_K\left(\bigcup_{i\in [m]}\left(K^{(i)}(t_i)\ominus K^{(i)}(t_i')\right)\right)\\
&\leq \sum_{i\in [m]} P_K\left(K^{(i)}(t_i)\ominus K^{(i)}(t_i')\right) = \sum_{i\in [m]} |t_i-t_i'|. 
\end{align*}
Using this inequality, it is now easy to obtain that $R_\infty$ is uniformly continuous.  
\end{proof}

Now we arrive at a theorem that is key to the proof of Interleaving Convergence Theorem. In the rest of the discussion, we use {\bf w.h.p.} to refer to {\bf with high probability}. Namely, if we state, as $n\to\infty$, w.h.p., a sequence $(A_n)_{n=1}^\infty$ of events holds, then this means that, as $n\to\infty$, the probability $\Pr[A_n]$ approaches $1$.

\begin{theorem}{\bf (1st Asymptotic Theorem)}\label{thm_1st_asym}\\
The sup-norm $\lVert R_n-R_\infty\rVert_\infty$ converges to $0$ in probability. In other words, for any $\epsilon>0$,  
\[
\lim_{n\to\infty} \Pr\left[\lVert R_n-R_\infty\lVert_\infty\leq \epsilon\right]=1.
\]
\end{theorem}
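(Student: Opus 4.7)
The approach I would take is to replicate the classical proof of the Glivenko--Cantelli theorem, suitably adapted to the multivariate setting. The key observation is that $R_\infty$ is monotone non-decreasing in each coordinate (since $K^{(i)}(t)$ is nested in $t$ by monotonicity of $\lambda_i$), and so is $R_n$ for the same reason; moreover, Lemma \ref{lem_R_infty_unif_cont} gives us uniform continuity of $R_\infty$. With these two ingredients, the standard grid-and-sandwich argument reduces the uniform statement to pointwise convergence on a finite set, which then follows from the weak law of large numbers.

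More concretely, the plan is as follows. First, fix $\varepsilon>0$. Using uniform continuity of $R_\infty$ (Lemma \ref{lem_R_infty_unif_cont}), choose $\delta>0$ so that $\lVert s-t\rVert_\infty<\delta$ implies $|R_\infty(s)-R_\infty(t)|<\varepsilon/2$, and form the finite grid $G_\delta\subset [0,1]^m$ of mesh size $\delta$. For each $t\in [0,1]^m$, set
\[
t^-_i\od\delta\lfloor t_i/\delta\rfloor,\qquad t^+_i\od (\delta\lfloor t_i/\delta\rfloor+\delta)\wedge 1,
\]
so that $t^-\leq t\leq t^+$ componentwise, both $t^\pm$ lie in $G_\delta$, and $\lVert t^+-t^-\rVert_\infty\leq\delta$.

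Second, observe that for any fixed $t\in [0,1]^m$, the quantity $n R_n(t)$ is a binomial random variable with $n$ trials and success probability $R_\infty(t)$, since each $x_a$ is i.i.d.\ from $P_K$ and the event $\{x_a\in\bigcap_i K^{(i)}(t_i)\}$ has probability $R_\infty(t)$. By the weak law of large numbers, $R_n(t)\to R_\infty(t)$ in probability, and since $G_\delta$ is finite a union bound gives
\[
\lim_{n\to\infty}\Pr\left[\max_{t\in G_\delta}|R_n(t)-R_\infty(t)|\geq\varepsilon/2\right]=0.
\]

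Third, exploit monotonicity: for any $t\in [0,1]^m$,
\[
R_n(t^-)\leq R_n(t)\leq R_n(t^+),\qquad R_\infty(t^-)\leq R_\infty(t)\leq R_\infty(t^+),
\]
so
\[
|R_n(t)-R_\infty(t)|\leq\max\bigl(|R_n(t^+)-R_\infty(t^+)|,\,|R_n(t^-)-R_\infty(t^-)|\bigr)+|R_\infty(t^+)-R_\infty(t^-)|.
\]
The last term is at most $\varepsilon/2$ by the choice of $\delta$, while on the event that the max over $G_\delta$ is below $\varepsilon/2$ the first term is also bounded by $\varepsilon/2$. Taking the sup over $t$ and combining with the previous step yields $\lim_{n\to\infty}\Pr[\lVert R_n-R_\infty\rVert_\infty\leq\varepsilon]=1$, as desired.

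I do not expect a genuine obstacle here; the argument is essentially the multivariate Glivenko--Cantelli theorem, specialized to the particular indexing family $\{\bigcap_i K^{(i)}(t_i)\}$. The only point worth verifying carefully is monotonicity of $R_n$ and $R_\infty$ in each coordinate, which is immediate from the nesting $K^{(i)}(s)\subseteq K^{(i)}(t)$ whenever $s\leq t$, itself a direct consequence of $\lambda_i$ being monotone increasing (Lemma \ref{lem:ell_t}).
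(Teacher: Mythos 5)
Your proposal is correct and follows essentially the same path as the paper's proof: pointwise convergence on a finite grid (the paper uses Chebyshev, you invoke the weak law of large numbers — same thing here) combined with the monotonicity-plus-uniform-continuity sandwich to promote it to a uniform bound. The only cosmetic difference is that you package the sandwich inequality into a single displayed bound in terms of $t^\pm$, whereas the paper writes out the chain of six inequalities explicitly.
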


For the proof, we recall an intuitive fact from probability theory:
\begin{quoting}
For a finite collection of $N$ events $A_{1,n}, \cdots, A_{N,n}$ that depends on $n = 1, 2, \cdots$, if, for each $i\in [N]$, $\lim_{n\to\infty} \Pr[A_{i,n}]=1$, then $\lim_{n\to\infty} \Pr[\bigcap_{i\in [N]} A_{i,n}]=1$.
\end{quoting}

\begin{proof}[Proof of Theorem \ref{thm_1st_asym}]
Let $(t_1, ..., t_m)\in [0,1]^m$. Let $I$ be the indicator function of $\bigcap_{i\in [m]} K^{(i)}(t_i)$. In other words, $I:K\to\{0,1\}$ is a function defined by 
\[
I(x) = \begin{cases}
1&\text{ if $x\in \bigcap_{i\in [m]} K^{(i)}(t_i)$}\\
0&\text{ otherwise}.
\end{cases}
\]
Notice that, since $(K,P_K)$ is a probability space (with Borel $\sigma$-algebra), $I$ is a random variable. Moreover, by Definition \ref{def_R_n}, if $I_1, ..., I_n$ are i.i.d copies of $I$, then 
\[
R_n(t_1, ..., t_m)=\frac{1}{n}\sum_{a=1}^n I_a.
\]
Let $p=P_K\left(\bigcap_{i\in [m]} K^{(i)}(t_i)\right)=R_\infty(t_1, ..., t_m)=\Pr[I = 1]$. By Chebyshev inequality, for any $\epsilon>0$, as $n\rightarrow\infty$, 
\[
\Pr\left[\left|\left(\frac{1}{n}\cdot\sum_{a=1}^n I_a\right)-p\right|>\epsilon\right]
\leq \frac{\Var\left(\frac{1}{n}\cdot \sum_{a=1}^n I_a\right)}{\epsilon^2}
=\frac{p(1-p)}{n\epsilon^2}\rightarrow 0.
\]
i.e. $\Pr\left[\left|R_n(t_1, ..., t_m)-R_\infty(t_1, ..., t_m)\right|>\epsilon\right]\rightarrow 0$. Thus we have obtained the pointwise convergence version of the result.

To prove uniform convergence, consider the following:
By Lemma \ref{lem_R_infty_unif_cont}, $R_\infty$ is uniformly continuous. Thus there exists $\delta>0$ such that, for all $(t_1, ..., t_m)$ and $(t_1', ..., t_m')$ with $\max_{i\in [m]} |t_i-t_i'|\leq\delta$, 
\[
\left|R_\infty(t_1, ..., t_m)-R_\infty(t_1', ..., t_m')\right|\leq\epsilon.
\]
Subdivide $[0,1]^m$ into finitely many ($m$-dimensional) rectangles of sides shorter than $\delta$. Let $V$ be the collection of all vertices of all rectangles in the subdivision. Since $V$ is a finite set, by the above pointwise result, as $n\to\infty$, w.h.p., 
\begin{equation}\label{eq:uniform_conv_on_rectangle_vertices}
\sup_{\vec{t}\in V} |R_n(\vec{t})-R_\infty(\vec{t})|\leq\epsilon.
\end{equation}
In other words, 
\[
\lim_{n\to\infty} \Pr\left[\sup_{\vec{t}\in V} |R_n(\vec{t})-R_\infty(\vec{t})|\leq\epsilon\right] = 1.
\]
We now claim that Equation (\ref{eq:uniform_conv_on_rectangle_vertices}) implies 
\begin{equation}\label{eq:uniform_conv}
\lVert R_n-R_\infty\rVert_\infty = \sup_{\vec{t}\in [0,1]^m} |R_n(\vec{t})-R_\infty(\vec{t})|\leq 2\epsilon.
\end{equation}
Let $\vec{t} = (t_1, ..., t_m)$ be an arbitrary element in $[0,1]^m$. Then $\vec{t}$ lies in some small rectangle of the subdivision. Let $\vec{t}_1$ and $\vec{t}_0$ be the unique maximum and minimum, respectively, in the rectangle. Then 
\begin{align*}
& R_\infty(\vec{t})-\epsilon\\
& \leq (R_\infty(\vec{t}_0)+\epsilon)-\epsilon = R_\infty(\vec{t}_0) & \text{(uniform continuity of $R_\infty$)}\\
& \leq R_n(\vec{t}_0)+\epsilon & \text{($\vec{t}_0$ is a vertex in the subdivision)}\\
& \leq R_n(\vec{t})+\epsilon & \text{($R_n$ is monotone and $\vec{t}_0\leq\vec{t}$)}\\
& \leq R_n(\vec{t}_1)+\epsilon & \text{($R_n$ is monotone and $\vec{t}\leq\vec{t}_1$)}\\
& \leq (R_\infty(\vec{t}_1)+\epsilon)+\epsilon & \text{($\vec{t}_1$ is a vertex in the subdivision)}\\
& \leq R_\infty(\vec{t})+\epsilon+\epsilon+\epsilon & \text{(uniform continuity of $R_\infty$)}.
\end{align*}
Thus, $R_\infty(\vec{t})-2 \epsilon\leq R_n(\vec{t})\leq R_\infty(\vec{t})+2\epsilon$. i.e. $|R_\infty(\vec{t})-R_n(\vec{t})|\leq 2\epsilon$. Since $\vec{t}\in [0,1]^m$ is arbitrary, Equation (\ref{eq:uniform_conv}) follows. In other words, 
\[
\lim_{n\to\infty} \Pr\left[\sup_{\vec{t}\in [0,1]^m} |R_n(\vec{t})-R_\infty(\vec{t})|\leq 2\epsilon\right] = 1.
\]
Rescaling $2\epsilon$ to $\epsilon$, the uniform result follows. 
\end{proof}

For people familiar with non-parametric statistics, it is immediate that Theorem \ref{thm_1st_asym} is a natural $m$-dimensional generalization of the standard {\it Glivenko-Cantelli theorem}\footnote{Glivenko-Cantelli theorem has been generalized in many aspects in different literatures and is closely related to the famous VC (Vapnik–Chervonenkis) theory in theoretical machine learning. See, for example, Chapter 12 of \cite{DGL_learning} for a detailed introduction that connects standard Glivenko-Cantelli theorem and the VC theory.} under specific conditions.

Recall that, for $x\in K$ and $i\in [m]$, $T_i(x)\od P_K\left(f_i^{-1}(-\infty,f_i(x))\right)$.

\begin{lemma}\label{lem_K^i_K^i_n_interleave}
For all $\epsilon>0$, as $n\rightarrow\infty$, w.h.p., 
\[
K^{(i)}(t-\epsilon)\subseteq K_n^{(i)}(t)\subseteq K^{(i)}(t+\epsilon),\ \forall\ i\in [m],\ t\in[0,1]
\]
where $K^{(i)}(t)\od\varnothing$ for $t<0$ and to be $K^{(i)}(t)\od K$ for $t>1$. 
\end{lemma}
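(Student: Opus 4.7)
\medskip

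The plan is to reduce the set-theoretic inclusions to a numerical comparison between the thresholds that define $K^{(i)}(s)$ and $K_n^{(i)}(t)$, and then invoke the classical one-dimensional Glivenko--Cantelli theorem applied separately to each function $f_i$. Fix $i\in[m]$. By condition (R2) the i.i.d.\ samples $f_i(x_1),\dots,f_i(x_n)$ have the continuous cumulative distribution function $F_i(y)\od P_K(f_i^{-1}(-\infty,y))$, and almost surely they are pairwise distinct, so there is a (random) permutation $\pi=\pi_i$ of $[n]$ with $f_i(x_{\pi(1)})<\cdots<f_i(x_{\pi(n)})$. By Definition~\ref{def_K_n^(i)(t)}, whenever $t\in[\tfrac{k-1}{n},\tfrac{k}{n})$ we have $K_n^{(i)}(t)=f_i^{-1}(-\infty,f_i(x_{\pi(k)}))$, and $K_n^{(i)}(1)=K$.

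Since $K^{(i)}(s)=f_i^{-1}(-\infty,\lambda_i(s))$ with $\lambda_i$ strictly increasing and $F_i\circ\lambda_i=\mathrm{id}$, both target sublevel sets are nested sublevel sets of $f_i$. Hence the double inclusion
\[
K^{(i)}(t-\epsilon)\subseteq K_n^{(i)}(t)\subseteq K^{(i)}(t+\epsilon)
\]
is equivalent, for $t\in[\tfrac{k-1}{n},\tfrac{k}{n})$, to the numerical inequalities
\[
t-\epsilon\;\leq\;F_i(f_i(x_{\pi(k)}))\;\leq\;t+\epsilon,
\]
using monotonicity of $F_i$ and the conventions $K^{(i)}(s)=\varnothing$ for $s<0$ and $K^{(i)}(s)=K$ for $s>1$.

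Next, I would apply the classical one-dimensional Glivenko--Cantelli theorem to the empirical CDF $\hat F_{i,n}(y)\od\tfrac{1}{n}\#\{a:f_i(x_a)\leq y\}$: since $F_i$ is continuous, $\sup_{y\in\R}|\hat F_{i,n}(y)-F_i(y)|\to 0$ almost surely as $n\to\infty$. Evaluating at the $k$-th order statistic gives $\hat F_{i,n}(f_i(x_{\pi(k)}))=k/n$, so
\[
\sup_{k\in[n]}\bigl|\tfrac{k}{n}-F_i(f_i(x_{\pi(k)}))\bigr|\;\longrightarrow\;0
\]
in probability. Given $\epsilon>0$, once $n$ is large enough that this supremum is below $\epsilon/2$ and also $1/n<\epsilon/2$, then for every $k\in[n]$ and every $t\in[\tfrac{k-1}{n},\tfrac{k}{n})$ the triangle inequality yields $|t-F_i(f_i(x_{\pi(k)}))|<\epsilon$, which is exactly the reformulated inclusion. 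The boundary case $t=1$ is immediate from $K_n^{(i)}(1)=K$ together with the extension convention. Since there are only $m$ functions to control, intersecting the ``good events'' over $i\in[m]$ preserves high probability and yields the claim uniformly in $i$ and $t$.

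The routine part is Glivenko--Cantelli and the finite union bound over $i\in[m]$; the only real care required is the bookkeeping across the half-open index intervals $[\tfrac{k-1}{n},\tfrac{k}{n})$ so that the $1/n$ discretization error is absorbed into the $\epsilon$-shift. No obstacle of substance arises beyond this.
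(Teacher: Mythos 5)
Your proof is correct and is essentially the paper's own argument: both reduce the set inclusions, via monotonicity of the sublevel sets, to a uniform numerical comparison between $t$ and the rescaled order statistics $T_i$ at the sampled points, and then combine a one-dimensional Glivenko--Cantelli bound with the $1/n$ discretization error. The only cosmetic difference is that the paper invokes its Theorem \ref{thm_1st_asym} restricted to a single coordinate axis, whereas you cite the classical one-dimensional Glivenko--Cantelli theorem directly---in this setting they are the same statement.
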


\begin{proof}
For $t=1$, $K_n^{(i)}(t) = K = K^{(i)}(t+\epsilon)$ and the inclusions are clearly satisfied. Now let $t\in [0,1)$. Then $t\in [\frac{a}{n},\frac{a+1}{n})$ for some $a\in\{0, 1,\cdots,n-1\}$. W.L.O.G., assume $f_i(x_1)<\cdots<f_i(x_n)$. By definition of $K_n^{(i)}(t)$, $K_n^{(i)}(t) = K^{(i)}(T_i(x_{a+1}))$. By monotonicity of $K^{(i)}$, it suffices to prove that, w.h.p., $|T_i(x_{a+1})-t|<\epsilon$. Notice that $R_\infty(1,\cdots,1,T_i(x_{a+1}),1,\cdots,1) = T_i(x_{a+1})$ and $R_n(1,\cdots,1,T_i(x_{a+1}),1,\cdots,1) = \frac{1}{n}\cdot \#(K^{(i)}(T_i(x_{a+1}))\cap X_n)=\frac{a}{n}$. Choose $n$ large enough such that $\frac{1}{n}<\frac{\epsilon}{2}$. Then 
\begin{align*}
& |T_i(x_{a+1})-t|\leq |T_i(x_{a+1})-\frac{a}{n}|+|\frac{a}{n}-t|\\
& =|R_\infty(1,\cdots,1,T_i(x_{a+1}),1,\cdots,1)-R_n(1,\cdots,1,T_i(x_{a+1}),1,\cdots,1)+|\frac{a}{n}-t|.
\end{align*}
In the last expression, by Theorem \ref{thm_1st_asym}, w.h.p., the first term is less than $\frac{\epsilon}{2}$, not depending on $t$, and the second term is less than $\frac{\epsilon}{2}$ by our choice of sufficient large $n$. Thus, w.h.p., $|T_i(x_{a+1})-t|<\epsilon$, for all $t\in [0,1]$ and the result follows.
\end{proof}

\begin{corollary}\label{cor_R_n_hat_R_n_interleave}
For all $\epsilon>0$, as $n\rightarrow\infty$, w.h.p., 
\[
R_n(t_1-\epsilon,...,t_m-\epsilon)
\leq \hat{R}_n(t_1,..., t_m)
\leq R_n(t_1+\epsilon,...,t_m+\epsilon)\ ,\ \forall\ (t_1, ..., t_m)\in [0,1]^m
\]
where, for the variables of $R_n$, negative inputs are automatically replaced by $0$ and inputs greater than $1$ are automatically replaced by $1$.   
\end{corollary}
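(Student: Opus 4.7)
The proof will be a short deterministic consequence of Lemma \ref{lem_K^i_K^i_n_interleave} together with the reformulation of $\hat{R}_n$ provided by Lemma \ref{lem_hat_R_n_expression}, so the plan is almost entirely bookkeeping once the high-probability set-level interleaving has been invoked. First I would fix $\epsilon>0$ and apply Lemma \ref{lem_K^i_K^i_n_interleave} to obtain, with probability tending to one as $n\to\infty$, the simultaneous inclusions
\[
K^{(i)}(t-\epsilon)\subseteq K_n^{(i)}(t)\subseteq K^{(i)}(t+\epsilon)
\]
holding uniformly for every $i\in [m]$ and every $t\in [0,1]$, with the same boundary conventions $K^{(i)}(s)=\varnothing$ for $s<0$ and $K^{(i)}(s)=K$ for $s>1$ already built into that lemma. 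This is the only probabilistic step; all subsequent manipulations are purely set-theoretic and hold pointwise on the good event.

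Next, on that event I would intersect the inclusions over $i\in [m]$ at an arbitrary $(t_1,\dots,t_m)\in [0,1]^m$, yielding
\[
\bigcap_{i\in [m]} K^{(i)}(t_i-\epsilon) \subseteq \bigcap_{i\in [m]} K_n^{(i)}(t_i) \subseteq \bigcap_{i\in [m]} K^{(i)}(t_i+\epsilon),
\]
then intersect each of the three sets with the sample $X_n$, count points, and divide by $n$. By Definition \ref{def_R_n} the outer terms become $R_n(t_1-\epsilon,\dots,t_m-\epsilon)$ and $R_n(t_1+\epsilon,\dots,t_m+\epsilon)$, while Lemma \ref{lem_hat_R_n_expression} identifies the middle quantity as $\hat{R}_n(t_1,\dots,t_m)$. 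Since the set inclusions above hold uniformly in $(t_1,\dots,t_m)$ on the same event, the sandwich inequality of the corollary follows simultaneously for all $(t_1,\dots,t_m)\in [0,1]^m$, which is exactly the claim.

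There is essentially no obstacle: the real work is concentrated in Lemma \ref{lem_K^i_K^i_n_interleave}, and the present corollary is the natural consequence of that lemma after intersecting over $i$, intersecting with $X_n$, and rescaling by $1/n$. The only item worth a sentence of care is verifying that the boundary conventions on the arguments of $R_n$ agree with the conventions on $K^{(i)}$ used in Lemma \ref{lem_K^i_K^i_n_interleave} (negative inputs corresponding to $\varnothing$ contribute $0$ to the count, and inputs exceeding $1$ corresponding to $K$ impose no constraint), so no additional case analysis for the boundary values $t_i-\epsilon<0$ or $t_i+\epsilon>1$ is required.
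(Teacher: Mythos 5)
Your proposal is correct and follows essentially the same route as the paper: invoke Lemma \ref{lem_K^i_K^i_n_interleave} for the high-probability set inclusions $K^{(i)}(t-\epsilon)\subseteq K_n^{(i)}(t)\subseteq K^{(i)}(t+\epsilon)$, intersect over $i$ and with $X_n$, count, normalize by $n$, and identify the three resulting quantities via Definition \ref{def_R_n} and Lemma \ref{lem_hat_R_n_expression}. The remark on boundary conventions is a nice touch and matches the conventions already built into Lemma \ref{lem_K^i_K^i_n_interleave}.
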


\begin{proof}
As $n\to\infty$, w.h.p., 
\begin{align*}
R_n(t_1-\epsilon,...,t_m-\epsilon)& =\frac{1}{n}\cdot \#\left(\left(\bigcap_{i\in [m]} K^{(i)}(t_i-\epsilon)\right)\cap X_n\right)& \text{(Definition \ref{def_R_n})}\\
& \leq\frac{1}{n}\cdot\#\left(\left(\bigcap_{i\in [m]} K_n^{(i)}(t_i)\right)\cap X_n\right)& \text{(Lemma \ref{lem_K^i_K^i_n_interleave})}\\
&(=\hat{R}_n(t_1,..., t_m))& \text{(Lemma \ref{lem_hat_R_n_expression})}\\
& \leq\frac{1}{n}\cdot\#\left(\left(\bigcap_{i\in [m]} K^{(i)}(t_i+\epsilon)\right)\cap X_n\right)& \text{(Lemma \ref{lem_K^i_K^i_n_interleave})}\\
& = R_n(t_1+\epsilon,...,t_m+\epsilon)& \text{(Definition \ref{def_R_n})}.
\end{align*}
Hence the result follows.
\end{proof}

Motivated from Theorem \ref{thm_Dow_DDow_express_in_hat_R_n_R_infty}, the key to prove the Interleaving Convergence Theorem is the zero sets of $R_\infty$, $R_n$, and $\hat{R}_n$, explicitly defined below.
\begin{definition}\label{def:Z_infty_Z_n}
Let $R_\infty$, $R_n$, and $\hat{R}_n$ be defined as in Definition \ref{def_R_infty}, \ref{def_R_n}, and \ref{def_hat_R_n}. Define the following subsets of $[0,1]^m$:
\begin{itemize}
\item[(i)] $Z_\infty\od R_\infty^{-1}(0)=\left\{(t_1, ..., t_m):\bigcap_{i\in [m]} K^{(i)}(t_i) = \varnothing\right\}$.
\item[(ii)] $Z_n\od R_n^{-1}(0)=\left\{(t_1, ..., t_m):X_n\cap\left(\bigcap_{i\in [m]} K^{(i)}(t_i)\right)=\varnothing\right\}$.
\item[(iii)]$\hat{Z}_n \od \hat{R}_n^{-1}(0) = \left\{(t_1, ..., t_m):X_n\cap\left(\bigcap_{i\in [m]} K_n^{(i)}(t_i)\right)=\varnothing\right\}$.\footnote{See Definition \ref{def_K_n^(i)(t)} for the defining formula of $K_n^{(i)}(t_i)$.}
\end{itemize}
For $Z\subseteq\R^m$ and $\epsilon>0$, define 
\begin{equation}\label{eq:Z_plus_epsilon}
Z+\epsilon\od\{(t_1+\epsilon_1, ...,t_m+\epsilon_m):(t_1,...,t_m)\in Z,\ 0\leq\epsilon_i\leq\epsilon\}.
\end{equation}
\end{definition}

Note that, since $R_\infty$, $R_n$, and $\hat{R}_n$ are all monotone, $Z_\infty$, $Z_n$, and $\hat{Z}_n$ are closed under lower parital order; namely, for $Z = Z_\infty, Z_n\text{ or }\hat{Z}_n$, if $\vec{t}_*\in Z$, then $\vec{t}\in Z$ for all $\vec{t}\leq\vec{t}_*$.

\begin{lemma}\label{lem_Z_n_hat_Z_n_Z_infty_interleave}
Let $Z_\infty$, $Z_n$, and $\hat{Z}_n$ be defined as in Definition \ref{def:Z_infty_Z_n}. Then, for all $\epsilon>0$, as $n\rightarrow\infty$, w.h.p., 
\begin{itemize}
\item[(i)] $Z_n\subseteq \hat{Z}_n+\epsilon$ and $\hat{Z}_n\subseteq Z_n+\epsilon$.
\item[(ii)] $Z_n\subseteq Z_\infty+\epsilon$. 
\end{itemize}
Moreover, with probability $1$, 
\begin{itemize}
\item[(iii)] $Z_\infty\subseteq Z_n$.
\end{itemize}
\end{lemma}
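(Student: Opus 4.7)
The plan is to handle parts (iii), (i), and (ii) in this order of increasing difficulty. A structural observation driving all three arguments is that $Z_\infty$, $Z_n$, and $\hat Z_n$ are \emph{lower sets} in $[0,1]^m$ (downward-closed under coordinate-wise $\leq$), because $R_\infty$, $R_n$, and $\hat R_n$ are monotone nondecreasing. For a lower set $Z$, the shifted set admits the equivalent description $\vec t \in Z+\epsilon \Leftrightarrow \vec t - \vec\epsilon \in Z$ (using the convention that a $\vec t$ with a nonpositive coordinate sits trivially in $Z$, since $K^{(i)}(t)=\varnothing$ for $t\le 0$). This equivalence lets me rewrite each of the set inclusions as a pointwise inequality between the relevant cumulative functions.

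For (iii): if $\vec t \in Z_\infty$, then $P_K(\bigcap_i K^{(i)}(t_i))=0$, and since $\bigcap_i K^{(i)}(t_i)$ is open while $P_K$ is equivalent to Lebesgue by (R1), this forces $\bigcap_i K^{(i)}(t_i)=\varnothing$. Hence $X_n\cap\bigcap_i K^{(i)}(t_i)=\varnothing$ for any realization and $\vec t \in Z_n$; no randomness enters. For (i): apply Corollary \ref{cor_R_n_hat_R_n_interleave}, which gives, w.h.p.\ and for every $\vec t$, the sandwich $R_n(\vec t-\vec\epsilon)\le \hat R_n(\vec t)\le R_n(\vec t+\vec\epsilon)$. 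If $\vec t \in Z_n$, substituting $\vec t-\vec\epsilon$ into the right inequality yields $\hat R_n(\vec t-\vec\epsilon)\le R_n(\vec t)=0$, so $\vec t-\vec\epsilon\in\hat Z_n$ and $\vec t\in\hat Z_n+\epsilon$. Symmetrically, $\vec t\in\hat Z_n$ gives $R_n(\vec t-\vec\epsilon)\le \hat R_n(\vec t)=0$, hence $\vec t-\vec\epsilon\in Z_n$ and $\vec t\in Z_n+\epsilon$.

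The hard part is (ii). The tempting approach, namely to deduce $R_\infty(\vec t-\vec\epsilon)=0$ from $R_n(\vec t)=0$ via the uniform convergence $\|R_n-R_\infty\|_\infty\to 0$ of Theorem \ref{thm_1st_asym}, fails because $R_\infty(\vec t-\vec\epsilon)$ can be strictly positive yet arbitrarily small when $\vec t$ sits just outside the upper boundary of $Z_\infty+\epsilon$; no uniform positive gap is available over the continuous family of bad $\vec t$'s. I would circumvent this by discretization. Fix a finite grid $V\subset[0,1]^m$ of spacing $\epsilon/2$ chosen so that every $\vec t\in[0,1]^m$ admits a grid point $\vec v(\vec t)\in V$ with $\vec v(\vec t)\in[\vec t-\vec\epsilon/2,\vec t]$ coordinate-wise. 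The finite set $\{R_\infty(\vec v-\vec\epsilon/2):\vec v\in V,\ \vec v-\vec\epsilon/2\notin Z_\infty\}$ consists of strictly positive numbers (if it is empty then $Z_\infty+\epsilon=[0,1]^m$ and (ii) is trivial); let $\delta>0$ be its minimum. By Theorem \ref{thm_1st_asym}, w.h.p.\ $\|R_n-R_\infty\|_\infty<\delta/2$.

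On this event, suppose for contradiction that some $\vec t\in Z_n\setminus(Z_\infty+\epsilon)$ exists. Then $\vec t-\vec\epsilon\notin Z_\infty$, and because $\vec v(\vec t)-\vec\epsilon/2\ge \vec t-\vec\epsilon$ while the complement of a lower set is an upper set, $\vec v(\vec t)-\vec\epsilon/2\notin Z_\infty$ as well. By the choice of $\delta$, $R_\infty(\vec v(\vec t)-\vec\epsilon/2)\ge\delta$, and uniform convergence yields $R_n(\vec v(\vec t)-\vec\epsilon/2)>\delta/2>0$. Since $\vec v(\vec t)-\vec\epsilon/2\le\vec t$ and $R_n$ is monotone, $R_n(\vec t)>0$, contradicting $\vec t\in Z_n$. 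The main obstacle in the whole lemma is exactly this passage from "$R_\infty>0$" to "$R_\infty$ uniformly bounded below": the grid replaces the continuous family of bad points by finitely many reference points, and the lower-set property of $Z_\infty$ transports the resulting positive gap back to an arbitrary $\vec t$.
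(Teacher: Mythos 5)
Your proof is correct. Parts (i) and (iii) follow essentially the same route as the paper (unwinding Corollary \ref{cor_R_n_hat_R_n_interleave} for (i), and using openness of $\bigcap_i K^{(i)}(t_i)$ together with (R1) for (iii)). For part (ii) your argument is genuinely different. The paper sets $\nu = \inf\{R_\infty(\vec t) : \vec t \in [0,1]^m \setminus (Z_\infty+\epsilon)\}$ and asserts $\nu>0$ by continuity of $R_\infty$ together with the remark that $Z_\infty+\epsilon$ is ``strictly away from'' $Z_\infty$ --- a compactness argument that, made fully rigorous, requires observing that $Z_\infty \subseteq \operatorname{int}(Z_\infty+\epsilon)$ relative to $[0,1]^m$, so that the closure of the complement is a compact set avoiding $Z_\infty$. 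Your grid argument makes the same positive gap explicit and elementary: the finitely many values $R_\infty(\vec v-\vec\epsilon/2)$ with $\vec v-\vec\epsilon/2\notin Z_\infty$ have a positive minimum $\delta$, and the lower-set/monotonicity structure propagates this bound to every $\vec t\notin Z_\infty+\epsilon$ because $\vec v(\vec t)-\vec\epsilon/2\le\vec t$. This sidesteps the compactness subtlety entirely and mirrors the $\epsilon$-net device already used in the proof of Theorem \ref{thm_1st_asym}, which is a nice structural symmetry. The paper's version buys brevity; yours buys a transparent positivity argument and a correct diagnosis of why the naive pointwise deduction $R_n(\vec t)=0\Rightarrow R_\infty(\vec t-\vec\epsilon)=0$ cannot work (the infimum of $R_\infty$ over the complement of $Z_\infty$ itself is zero, so no uniform gap exists there; the gap only appears after stepping back by $\epsilon$).
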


\begin{proof} For the first inclusion in (i), let $z\in Z_n$, where $z = (t_1, \cdots,t_m)$; namely, $R_n(z) = 0$. By Corollary \ref{cor_R_n_hat_R_n_interleave}, w.h.p., $\hat{R}_n(t_1-\epsilon,\cdots,t_m-\epsilon) = 0$; namely, $(t_1-\epsilon,\cdots,t_m-\epsilon)\in\hat{Z}_n$. Thus $z = (t_1,\cdots,t_m)\in\hat{Z}_n+\epsilon$. 

For the second inclusion in (i), let $z\in\hat{Z}_n$, where $z = (t_1,\cdots,t_m)$; namely, $\hat{R}_n(z) = 0$. By Corollary \ref{cor_R_n_hat_R_n_interleave}, w.h.p., $R_n(t_1-\epsilon,\cdots,t_m-\epsilon) = 0$; namely, $(t_1-\epsilon,\cdots,t_m-\epsilon)\in Z_n$. Thus $z = (t_1,\cdots,t_m)\in Z_n+\epsilon$.

To prove (ii), first, let $\nu = \inf\{R_\infty(\vec{t}):\vec{t}\in [0,1]^m\setminus (Z_\infty+\epsilon)\}$; if $[0,1]^m\setminus (Z_\infty+\epsilon)=\varnothing$, simply define $\nu = 1$. Since $Z_\infty$ is the domain where $R_\infty$ takes zero values and $Z_\infty+\epsilon$ is strictly away from $Z_\infty$, by continuity of $R_\infty$ and the fact $R_\infty(1,\cdots,1) = 1$, we must have $\nu> 0$. By Theorem \ref{thm_1st_asym}, w.h.p., $\lVert R_n-R_\infty\lVert_\infty\leq \nu/2$. Thus, by triangle inequality, w.h.p., $R_n\geq \nu-\nu/2 = \nu/2>0$ on $[0,1]^m\setminus(Z_\infty+\epsilon)$. Now, for every $\vec{t}\in Z_n$, $R_n(\vec{t})=0$ and thus $\vec{t}\notin [0,1]^m\setminus Z_\infty+\epsilon$; namely, $\vec{t}\in Z_\infty+\epsilon$. Hence $Z_n\subseteq Z_\infty+\epsilon$.

To prove (iii), let $\vec{t}\in Z_\infty$. Then $R_\infty(\vec{t}) = 0$, where $\vec{t}=(t_1,...,t_m)$; namely $P_K(\bigcap_{i\in [m]}K^{(i)}(t_i))=0$. Since $K^{(i)}(t_i)$ is open for all $i\in [m]$, $\bigcap_{i\in [m]}K^{(i)}(t_i)$ is open with zero measure. Therefore, $\bigcap_{i\in [m]}K^{(i)}(t_i)=\varnothing$ and thus $R_n(\vec{t}) = \frac{1}{n}\cdot\#((\bigcap_{i\in [m]}K^{(i)}(t_i))\cap X_n)=0$. Hence, $\vec{t}\in Z_n$ and $Z_\infty\subseteq Z_n$.
\end{proof}

Immediate from Lemma \ref{lem_Z_n_hat_Z_n_Z_infty_interleave} is 
\begin{corollary}\label{cor_Zero_Interleave}
Let $Z_\infty$ and $\hat{Z}_n$ be defined as in Definition \ref{def:Z_infty_Z_n}. Then, for all $\epsilon>0$, as $n\rightarrow\infty$, w.h.p., 
\[
\hat{Z}_n\subseteq Z_\infty+\epsilon\text{ and } 
Z_\infty\subseteq \hat{Z}_n+\epsilon.
\]
\end{corollary}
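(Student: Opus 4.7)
The plan is to derive the corollary by chaining the three containments in Lemma \ref{lem_Z_n_hat_Z_n_Z_infty_interleave}. First I would record the small observation that the shift operation of \eqref{eq:Z_plus_epsilon} is transitive in a quantitative way: for any $Z\subseteq\R^m$ and any $\delta_1,\delta_2\geq 0$, one has $(Z+\delta_1)+\delta_2\subseteq Z+(\delta_1+\delta_2)$. This follows directly from unwrapping the definition, since an element of the left-hand side can be written as $z+(\alpha_1+\beta_1,\ldots,\alpha_m+\beta_m)$ with $z\in Z$, $0\leq\alpha_i\leq\delta_1$ and $0\leq\beta_i\leq\delta_2$, so each coordinate shift is bounded by $\delta_1+\delta_2$.

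Given $\epsilon>0$, I would then invoke parts (i) and (ii) of Lemma \ref{lem_Z_n_hat_Z_n_Z_infty_interleave} with $\epsilon/2$ in place of $\epsilon$. Each of those statements holds with probability tending to $1$, and the finite intersection of events that individually hold w.h.p.\ still holds w.h.p.; part (iii) even holds with probability $1$ unconditionally. Thus, as $n\to\infty$, with probability tending to $1$, the four containments
\[
\hat{Z}_n\subseteq Z_n+\tfrac{\epsilon}{2},\qquad Z_n\subseteq\hat{Z}_n+\tfrac{\epsilon}{2},\qquad Z_n\subseteq Z_\infty+\tfrac{\epsilon}{2},\qquad Z_\infty\subseteq Z_n
\]
will hold simultaneously.

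On this joint event, the first inclusion of the corollary follows by chaining the first and third containments above and applying the quantitative transitivity observation: $\hat{Z}_n\subseteq (Z_\infty+\tfrac{\epsilon}{2})+\tfrac{\epsilon}{2}\subseteq Z_\infty+\epsilon$. The reverse inclusion $Z_\infty\subseteq\hat{Z}_n+\epsilon$ follows just as easily from the fourth and second containments, since $Z_\infty\subseteq Z_n\subseteq\hat{Z}_n+\tfrac{\epsilon}{2}\subseteq\hat{Z}_n+\epsilon$. There is no genuine obstacle here: this corollary is a purely formal consequence of Lemma \ref{lem_Z_n_hat_Z_n_Z_infty_interleave}, and all of the substantive probabilistic content has already been absorbed into the 1st Asymptotic Theorem (Theorem \ref{thm_1st_asym}) and the interleaving arguments of Lemma \ref{lem_K^i_K^i_n_interleave} and Corollary \ref{cor_R_n_hat_R_n_interleave} that feed into that lemma.
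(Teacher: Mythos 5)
Your proof is correct and is exactly the argument the paper intends — the paper states the corollary as immediate from Lemma \ref{lem_Z_n_hat_Z_n_Z_infty_interleave}, and you have simply spelled out the chaining of the three inclusions, along with the (correct) quantitative transitivity of the shift operation and the standard fact that finitely many w.h.p.\ events hold simultaneously w.h.p.
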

We are now able to prove Theorem \ref{thm_inter_con}.
\begin{theorem*}{\bf (Theorem \ref{thm_inter_con}, Interleaving Convergence Theorem)}\\
Let $M_n\in\mathcal{M}_{m,n}^o$ be sampled from a regular pair $\RegPair$. Then, for all $\epsilon>0$, as $n\rightarrow\infty$,
\[
\Pr\left[d_{\INT}(\DDow(\mathcal{F}, P_K),\Dow(S(M_n)))\leq\epsilon\right]\rightarrow 1.
\]
\end{theorem*}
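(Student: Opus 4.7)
The plan is to combine Theorem~\ref{thm_Dow_DDow_express_in_hat_R_n_R_infty} (which characterizes $\DDow$ and $\Dow$ via the non-zero sets of $R_\infty$ and $\hat R_n$), Corollary~\ref{cor_Zero_Interleave} (the zero-set interleaving $\hat Z_n\subseteq Z_\infty+\epsilon$ and $Z_\infty\subseteq \hat Z_n+\epsilon$ holding w.h.p.), and the pointwise set-inclusions from Lemma~\ref{lem_K^i_K^i_n_interleave}. The key property I will exploit throughout is that both $Z_\infty$ and $\hat Z_n$ are \emph{downward closed} in the coordinatewise partial order, since $R_\infty$ and $\hat R_n$ are coordinatewise monotone. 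Fixing $\epsilon>0$, I work on the asymptotically probability-one event on which all of these uniform statements hold at scale $\epsilon$.

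For the inclusion $\Dow(S(M_n))\subseteq \DDow(\mathcal F, P_K)+\epsilon$, I would argue directly via sublevel sets. If $\sigma\in \Dow(S(M_n))(\vec t)$, then $\bigcap_{i\in\sigma} K_n^{(i)}(\theta(t_i))$ is nonempty (it contains a sample point), forcing $t_i\geq 0$ for all $i\in\sigma$. By Lemma~\ref{lem_K^i_K^i_n_interleave}, $K_n^{(i)}(\theta(t_i))\subseteq K^{(i)}(\theta(t_i)+\epsilon)$, and the latter equals $K^{(i)}(\theta(t_i+\epsilon))$ since the two arguments coincide below saturation and both give $K$ above it. Intersecting over $i\in\sigma$ produces $\sigma\in \DDow(\mathcal F, P_K)(\vec t+\epsilon\mathbf 1)$.

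For the reverse $\DDow(\mathcal F, P_K)\subseteq \Dow(S(M_n))+\epsilon$, I would proceed by contrapositive using the zero-set interleaving. Assuming $\theta(\vec t+\epsilon\mathbf 1)^\sigma\in \hat Z_n$, the containment $\hat Z_n\subseteq Z_\infty+\epsilon$ produces $\vec u\in Z_\infty$ with $\vec u\geq \theta(\vec t+\epsilon\mathbf 1)^\sigma-\epsilon\mathbf 1$. On the indices $i\in\sigma$ where no $\theta$-clipping occurs (i.e.\ $\theta(t_i)+\epsilon\leq 1$), this yields $u_i\geq \theta(t_i)$; the remaining ``saturated'' indices $i\in\sigma$ with $\theta(t_i+\epsilon)=1$ contribute only $K^{(i)}(1)=K$ to the shifted nerve intersection on either the empirical or the continuous side, and can be dropped from $\sigma$ without affecting membership. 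After this reduction, the downward closure of $Z_\infty$ forces $\theta(\vec t)^\sigma\in Z_\infty$, i.e.\ $\sigma\notin \DDow(\mathcal F, P_K)(\vec t)$, completing the contrapositive.

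The hard part will be the careful bookkeeping around the $\theta$-clipping at the boundary $t_i\sim 1$: the naive coordinatewise bound $u_i\geq\theta(t_i)$ fails precisely when $\theta(t_i)>1-\epsilon$, and one must verify that the saturated indices do not influence the nerve intersection after the shift by $\epsilon$ on either side. A cleaner alternative is to apply Corollary~\ref{cor_Zero_Interleave} at the strictly smaller scale $\epsilon/2$, absorbing the clipping discrepancy into the extra $\epsilon/2$ of slack, and then conclude the desired interleaving at scale $\epsilon$; I expect this latter route to give the most concise write-up.
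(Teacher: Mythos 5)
Your first inclusion, $\Dow(S(M_n))\subseteq\DDow(\mathcal F,P_K)+\epsilon$, argued directly from $K_n^{(i)}(s)\subseteq K^{(i)}(s+\epsilon)$ (Lemma~\ref{lem_K^i_K^i_n_interleave}) plus the observation that $K^{(i)}(\theta(t_i)+\epsilon)=K^{(i)}(\theta(t_i+\epsilon))$ once $t_i\geq 0$, is correct, and is in fact cleaner than the paper's treatment of the same inclusion.

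The second inclusion has a genuine gap, and it is \emph{not} the one you flag. You focus on the saturated indices $i\in\sigma$, but the problematic coordinates are the ones with $i\notin\sigma$. From $\theta(\vec t+\epsilon\mathbf{1})^\sigma\in\hat Z_n$ and $\hat Z_n\subseteq Z_\infty+\epsilon$ you obtain $\vec u\in Z_\infty$ with $u_i\geq\theta(t_i+\epsilon)^\sigma-\epsilon$ for all $i$. For $i\notin\sigma$ this says only $u_i\geq 1-\epsilon$, while the target vector $\theta(\vec t)^\sigma$ (or $\theta(\vec t)^{\sigma'}$ after your reduction) has $1$ in those coordinates. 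Concluding $\theta(\vec t)^\sigma\in Z_\infty$ by ``downward closure'' would require pushing $u_i$ \emph{up} from $1-\epsilon$ to $1$, which downward closure does not permit; the intersection $\bigcap_i K^{(i)}(u_i)$ may be empty precisely because of the factors $K^{(i)}(u_i)\subsetneq K$ with $i\notin\sigma$, so $\bigcap_{i\in\sigma}K^{(i)}(\theta(t_i))$ can still be nonempty. Your fallback of applying Corollary~\ref{cor_Zero_Interleave} at scale $\epsilon/2$ does not close this: you only improve $u_i\geq 1-\epsilon$ to $u_i\geq 1-\epsilon/2$, still short of $1$. (For what it is worth, the paper's own proof of Theorem~\ref{thm_inter_con} has exactly this gap at the step passing from $(t_1^\sigma,\dots,t_m^\sigma)\in Z_\infty+\epsilon$ to $((t_1-\epsilon)^\sigma,\dots,(t_m-\epsilon)^\sigma)\in Z_\infty$.)

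The fix is to run the zero-set argument on the $\sigma$-face, not on the full cube. For each $\sigma\subseteq[m]$, the sub-collection $(\{f_i\}_{i\in\sigma},P_K)$ is itself a regular pair, so Lemma~\ref{lem_Z_n_hat_Z_n_Z_infty_interleave} applies to it and yields, w.h.p., an interleaving between the zero sets of the ``face'' functions $\vec s\mapsto R_\infty(\vec s^\sigma)$ and $\vec s\mapsto\hat R_n(\vec s^\sigma)$ on $[0,1]^\sigma$; since $2^m$ is finite these hold simultaneously w.h.p. Now given $\sigma\in\DDow(\mathcal F,P_K)(\vec t)$, set $\sigma'=\{i\in\sigma: 0<t_i\leq 1-\epsilon\}$. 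Then $\sigma'$ is a face of $\sigma$, hence $\sigma'\in\DDow(\mathcal F,P_K)(\vec t)$, and the $\sigma'$-face interleaving (applied at a point where no coordinate is clipped, since $t_i+\epsilon\leq 1$ for $i\in\sigma'$) gives $\sigma'\in\Dow(S(M_n))(\vec t+\epsilon\mathbf{1})$. Finally $\sigma\in\Dow(S(M_n))(\vec t+\epsilon\mathbf{1})$ follows because the discarded indices have $\theta(t_i+\epsilon)=1$, so $K_n^{(i)}(1)=K$ imposes no constraint. On the $\sigma'$-face there are no ``$i\notin\sigma$'' coordinates and the obstruction disappears.
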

\begin{proof}
Let $\epsilon>0$. We need to prove: as $n\to\infty$, w.h.p.
\[
\DDow(\mathcal{F}, P_K)(t_1-\epsilon,...,t_m-\epsilon)\subseteq\Dow(S(M_n))(t_1,...,t_m)\subseteq\DDow(\mathcal{F}, P_K)(t_1+\epsilon,...,t_m+\epsilon).
\]
By Corollary \ref{cor_Zero_Interleave}, as $n\to\infty$, w.h.p., $\hat{Z}_n\subseteq Z_\infty+\epsilon\text{ and } 
Z_\infty\subseteq \hat{Z}_n+\epsilon$. 

For the first inclusion, let $\sigma\in\DDow(\mathcal{F}, P_K)(t_1-\epsilon,...,t_m-\epsilon)$. Then, by Theorem \ref{thm_Dow_DDow_express_in_hat_R_n_R_infty}, 
\[
R_\infty((t_1-\epsilon)^\sigma,\cdots,(t_m-\epsilon)^\sigma)\neq 0
\]
where $(t_i-\epsilon)^\sigma=(t_i-\epsilon)$ if $i\in\sigma$ and $(t_i-\epsilon)^\sigma=1$ if $i\notin\sigma$. We need to prove $\hat{R}_n(t_1^\sigma,\cdots,t_m^\sigma)\neq 0$. Let us prove by contradiction. Suppose $\hat{R}_n(t_1^\sigma,\cdots,t_m^\sigma)=0$. Then $(t_1^\sigma,\cdots,t_m^\sigma)\in \hat{Z}_n$. Thus $(t_1^\sigma,\cdots,t_m^\sigma)\in Z_\infty+\epsilon$ and $((t_1-\epsilon)^\sigma,\cdots,(t_m-\epsilon)^\sigma)$, meaning $R_\infty(((t_1-\epsilon)^\sigma,\cdots,(t_m-\epsilon)^\sigma))=0$, a contradiction. Thus $\hat{R}_n(t_1^\sigma,\cdots,t_m^\sigma)\neq 0$ and $\sigma\in\Dow(S(M_n))(t_1,...,t_m)$, completing the 1st part. Analogously, the second inclusion can be proved by using Theorem \ref{thm_Dow_DDow_express_in_hat_R_n_R_infty}, with the help of $Z_\infty\subseteq \hat{Z}_n+\epsilon$. 
\end{proof}

\subsection{Proof of Theorem \ref{thm_asym_L_k}, the convergence of $L_k(M_n)$ to $L_k\RegPair$}\label{sec:pf_of_asym_L_k}
In this subsection, we state the well-known Isometry Theorem in topological data analysis and use it to prove Theorem \ref{thm_asym_L_k}. We begin with the definition of a {\it quadrant-tame} persistence module. 
\begin{definition}[Definition 1.12 in \cite{Oudot2015PersistenceT}]
A persistence module $\mathbb{V} = (V_i, v_i^j)$ over $\R$ is {\bf quadrant-tame} if $\rank\ v_i^j<\infty$ for all $i<j$. 
\end{definition}

\begin{theorem}[Isometry Theorem, Theorem 3.1 in \cite{Oudot2015PersistenceT}]\label{thm:isometry_thm}
Let $\mathbb{V}, \mathbb{W}$ be quadrant-tame persistence modules over $\R$. Then
\[
\db(\dgm(\mathbb{V}), \dgm(\mathbb{W})) = \di(\mathbb{V}, \mathbb{W})
\]
where $\db$ is the bottleneck distance between persistence diagrams and $\di$ is the interleaving distance between persistence modules. 
\end{theorem}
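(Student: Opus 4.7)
The plan is to prove the two inequalities $\db(\dgm(\mathbb{V}), \dgm(\mathbb{W})) \leq \di(\mathbb{V},\mathbb{W})$ and $\di(\mathbb{V},\mathbb{W}) \leq \db(\dgm(\mathbb{V}), \dgm(\mathbb{W}))$ separately, in both cases exploiting the decomposition of a quadrant-tame persistence module into interval summands (the structure theorem gives us a well-defined multiset $\dgm(\mathbb{V})$ to work with).

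First I would tackle the easier direction $\di(\mathbb{V},\mathbb{W}) \leq \db(\dgm(\mathbb{V}), \dgm(\mathbb{W}))$. Suppose a $\delta$-matching $\mu$ between the diagrams is given, so matched points have $\ell^\infty$-distance at most $\delta$ and unmatched points lie within $\delta$ of the diagonal. Write $\mathbb{V} \cong \bigoplus_{I} \mathbb{I}_I$ and $\mathbb{W} \cong \bigoplus_{J} \mathbb{I}_J$. For each pair of intervals $I=(b_I,d_I)$ and $J=(b_J,d_J)$ matched by $\mu$, the hypothesis $|b_I-b_J|,|d_I-d_J|\leq \delta$ guarantees nontrivial morphisms $\mathbb{I}_I \to \mathbb{I}_J[\delta]$ and $\mathbb{I}_J \to \mathbb{I}_I[\delta]$ whose composites with each further $\delta$-shift are the canonical structure maps. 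For intervals unmatched by $\mu$, the near-diagonal condition forces length at most $2\delta$, so the zero morphism interleaves trivially with itself at shift $\delta$. Assembling componentwise yields a $\delta$-interleaving of $\mathbb{V}$ and $\mathbb{W}$.

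Second, for the harder direction $\db \leq \di$, suppose $\phi\colon \mathbb{V}\to \mathbb{W}[\delta]$ and $\psi\colon \mathbb{W}\to \mathbb{V}[\delta]$ form a $\delta$-interleaving. The strategy is to build a $\delta$-matching by the induced-matching construction of Bauer--Lesnick: $\phi$ induces a canonical partial injection $\chi_\phi$ from $\dgm(\mathbb{V})$ into $\dgm(\mathbb{W})$ obtained by sorting intervals by birth time and greedily pairing each $I\in \dgm(\mathbb{V})$ with the interval of $\dgm(\mathbb{W})$ whose birth is in $[b_I-\delta,b_I+\delta]$ and whose death differs from $d_I$ by at most $\delta$; finite ranks (from quadrant-tameness) guarantee the greedy procedure terminates and is well-defined. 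One then shows that unmatched intervals in $\mathbb{V}$ have length at most $2\delta$ by analyzing the rank function $\operatorname{rk}(v_i^{i+2\delta})$ via the factorization $\psi\circ\phi$. Symmetrically, $\psi$ induces $\chi_\psi$. Combining $\chi_\phi$ and $\chi_\psi$ (or simply using $\chi_\phi$ and checking both sides) produces a $\delta$-matching of $\dgm(\mathbb{V})$ and $\dgm(\mathbb{W})$.

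The main obstacle is the induced-matching step: one must verify that intervals left unmatched are short, which requires translating the interleaving diagram (the four composites $\phi_{i}^{j}\circ\phi,\ \psi\circ\phi,\ldots$) into precise statements about ranks of structure maps and then into a count of birth/death endpoints in prescribed boxes. Quadrant-tameness is essential here to make these rank computations finite and to apply the Bauer--Lesnick box lemma. Once unmatched intervals are shown to have length $\leq 2\delta$, the bottleneck bound $\db\leq \delta$ follows, completing the isometry.
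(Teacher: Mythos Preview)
The paper does not prove this theorem at all: it is quoted verbatim as Theorem 3.1 from Oudot's monograph and then used as a black box in the proofs of Lemma~\ref{lem:change_mathcal_T} and Lemma~\ref{lem:change_complex}. So there is no ``paper's own proof'' to compare against.

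On the merits of your sketch: the overall two-inequality strategy and the easy direction $\di\leq\db$ are correct and standard. For the hard direction you invoke the Bauer--Lesnick induced-matching machinery, which is indeed one of the cleanest modern proofs; however, your description of the induced matching is not quite right. The construction is not a single greedy pass ``sorting intervals by birth time and pairing each $I$ with an interval whose birth lies in $[b_I-\delta,b_I+\delta]$''. Rather, one factors the morphism $\phi\colon \mathbb V\to \mathbb W[\delta]$ through its image, obtains a canonical matching for the resulting monomorphism (matching by common death, nested by birth) and for the epimorphism (matching by common birth, nested by death), and composes the two partial matchings. The fact that unmatched bars are short then follows from the structure theorems for submodules and quotients of interval modules, not from a direct rank count via $\psi\circ\phi$ as you suggest. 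Your ``box lemma'' remark is closer to the original Cohen-Steiner--Edelsbrunner--Harer interpolation argument, which is a different (and also valid) route. Either approach works, but as written your hard-direction paragraph conflates the two and would need to be disentangled into one coherent argument before it could be called a proof.
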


Notice that, throughout the paper, all simplicial complexes are subcomplexes of $2^{[m]}$ and hence all vector spaces in the persistence modules we consider are finite dimensional and thus quadrant-tame. Therefore, we have the Isometry Theorem available. In the rest of this section, the proof of Theorem \ref{thm_asym_L_k} is broken into several lemmas based on some newly developed tools. Since the presentation is in logical order instead of the order of ideas, we give a quick overview of how they are pieced together. 

The central observation throughout the proof is Lemma \ref{lem:L_k_sup_expression}, which writes both $L_k\RegPair$ and $L_k(M_n)$ in terms of double supremum expressions. Notice that their expressions only differ in $\DDow\RegPair$ and $\Dow(S(M_n))$, and in $\mathcal{T}\RegPair_+$ and $\mathcal{T}(M_n)_+$, whcih are introduced in Definition \ref{def:diagonal_rays_RegPair_Empirical} and Definition \ref{def:diagonal_rays}. 

With this in mind, it is easy to see that a result that bounds the variation of the double supremum expression when $\DDow\RegPair$ is replaced by $\Dow(S(M_n))$ is needed, which is exactly Lemma \ref{lem:change_complex}. Similarly, a result that bounds the variation of the double supremum expression when $\mathcal{T}\RegPair_+$ is replaced by $\mathcal{T}(M_n)_+$ is also needed, which is Lemma \ref{lem:change_mathcal_T}. We still need to justify the applicability of Lemma \ref{lem:change_complex} and Lemma \ref{lem:change_mathcal_T}, respectively, which corresponds to the Interleaving Convergence Theorem (Theorem \ref{thm_inter_con}) and Lemma \ref{lem:Hausdorff_convergence}, respectively. Now the pieces can be connected and combined to complete the proof of Theorem \ref{thm_asym_L_k}. Notice that Isometry Theorem (Theorem \ref{thm:isometry_thm}) is lurking in the proofs of Lemma \ref{lem:change_complex} and Lemma \ref{lem:change_mathcal_T} and thus playing an important role in the proof of Theorem \ref{thm_asym_L_k}.

\begin{definition}\label{def:diagonal_rays}
Let $\mathcal{T}\subseteq [0,1]^m$. Define the set of {\bf diagonal rays} of $\mathcal{T}$, denoted $\mathcal{T}_+$, by
\begin{equation}
\mathcal{T}_+ \od\{\ray_T:T = (t_1, ..., t_m)\in\mathcal{T}\}
\end{equation}
where $\ray_T\od\left\{(t_1-t, ..., t_m-t):t\in [0,\max_{i\in [m]} t_i]\right\}$. 
\end{definition}

\begin{definition}\label{def:diagonal_rays_RegPair_Empirical}
Let $\RegPair$ be a regular pair and $M_n\in\mathcal{M}_{m,n}^o$ be sampled from $\RegPair$. Define the following two subsets of $[0,1]^m$:
\begin{align*}
& \mathcal{T}\RegPair\od\{(T_1(x), ..., T_m(x)):x\in K\}\text{, and}\\
& \hat{\mathcal{T}}(M_n)\od\left\{\left(\ord_1(M_n,a)/n,\cdots,\ord_m(M_n,a)/n\right): a\in [n]\right\}.
\end{align*}
\end{definition}

Recall that the {\bf Hausdorff distance} between two subsets $S_1, S_2$ of $\R^m$ is defined as
\begin{equation}\label{eq:Hausdordd_distance}
\dH(S_1, S_2) = \inf\{\epsilon>0:S_1\subseteq S_2+B(0,\epsilon),S_2\subseteq S_1+B(0,\epsilon)\}
\end{equation}
where $B(0,\epsilon)$ is the $\epsilon$-ball in $\R^m$ centered at $0$ and $+$ inside the $\inf$ is the operation of Minkowski sum. In the next lemma, we prove that $\hat{\mathcal{T}}(M_n)$ approximates $\mathcal{T}\RegPair$ in Hausdorff distance.
\begin{lemma}\label{lem:Hausdorff_convergence}
Let $M_n\in\mathcal{M}_{m,n}^o$ be  sampled from a regular pair $\RegPair$. Then, as $n\to\infty$, 
\begin{equation}
\dH(\mathcal{T}\RegPair, \hat{\mathcal{T}}(M_n))\to 0 \text{ in probability.}
\end{equation}
\end{lemma}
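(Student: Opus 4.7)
The plan is to introduce the intermediate set
$$\check{\mathcal{T}}_n \od \{(T_1(x_a),\dots,T_m(x_a)): a\in [n]\} \subseteq \mathcal{T}\RegPair,$$
where $x_1,\dots,x_n$ is the point cloud underlying $M_n$, and to bound
$$\dH(\mathcal{T}\RegPair, \hat{\mathcal{T}}(M_n)) \leq \dH(\mathcal{T}\RegPair, \check{\mathcal{T}}_n) + \dH(\check{\mathcal{T}}_n, \hat{\mathcal{T}}(M_n)),$$
showing that each summand tends to $0$ in probability for quite different reasons.

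For the second summand, I would pair up the $a$-th element of $\check{\mathcal{T}}_n$ with the $a$-th element of $\hat{\mathcal{T}}(M_n)$ and prove uniform closeness across $a \in [n]$ and $i \in [m]$. Using $\lambda_i(T_i(x_a)) = f_i(x_a)$ (a direct consequence of Lemma \ref{lem:ell_t}) together with the genericity $M_n \in \mathcal{M}_{m,n}^o$ (which forces the values $f_i(x_b)$ to be pairwise distinct along each row), one has
$$R_n(1,\dots,1,T_i(x_a),1,\dots,1) = \frac{\#\{b\in [n]: f_i(x_b) < f_i(x_a)\}}{n} = \frac{\ord_i(M_n,a)-1}{n},$$
while $R_\infty(1,\dots,1,T_i(x_a),1,\dots,1) = T_i(x_a)$. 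Hence
$$\left|\frac{\ord_i(M_n,a)}{n} - T_i(x_a)\right| \leq \|R_n - R_\infty\|_\infty + \frac{1}{n},$$
and the 1st Asymptotic Theorem (Theorem \ref{thm_1st_asym}) forces the right-hand side to be uniformly small in $a$ and $i$ with high probability. This yields $\dH(\check{\mathcal{T}}_n, \hat{\mathcal{T}}(M_n)) \to 0$ in probability.

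For the first summand, the inclusion $\check{\mathcal{T}}_n \subseteq \mathcal{T}\RegPair$ is automatic, so only the density direction $\mathcal{T}\RegPair \subseteq \check{\mathcal{T}}_n + B(0,\epsilon)$ requires work. The map $T\colon K\to[0,1]^m$, $T(x) = (T_1(x),\dots,T_m(x))$, is continuous since each $T_i$ is the composition of the continuous $f_i$ with the continuous CDF $\ell\mapsto P_K(f_i^{-1}(-\infty,\ell))$ produced by the proof of Lemma \ref{lem:ell_t}. In particular $\mathcal{T}\RegPair = T(K)\subseteq[0,1]^m$ is totally bounded. Fixing $\epsilon>0$, I would choose a finite $\epsilon/2$-net $\{y_1,\dots,y_N\} \subseteq \mathcal{T}\RegPair$, write $y_j = T(z_j)$ with $z_j\in K$, and use continuity of $T$ at $z_j$ to select an open ball $B(z_j, r_j) \subseteq K$ with $T(B(z_j, r_j)) \subseteq B(y_j,\epsilon/2)$. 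Since $P_K$ is equivalent to Lebesgue measure (Definition \ref{defn:regpair}, (R1)), each ball has positive $P_K$-mass $p_j > 0$; a union bound on $(1-p_j)^n$ then shows that, with probability tending to $1$, every $B(z_j,r_j)$ contains at least one sample point $x_a$. On this event, for arbitrary $y = T(x) \in \mathcal{T}\RegPair$, choose $j$ with $|y-y_j|<\epsilon/2$ and $a$ with $x_a \in B(z_j,r_j)$; the triangle inequality then yields $|y - T(x_a)| < \epsilon$, placing $y$ within $\epsilon$ of a point of $\check{\mathcal{T}}_n$.

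The main obstacle is this first summand: the map $T$ can warp geometry significantly, so it is not a priori clear that i.i.d.\ samples in $K$ become $\epsilon$-dense in the image $T(K)$. What saves the argument is the triple combination of compactness of the ambient cube $[0,1]^m$ (giving total boundedness of $\mathcal{T}\RegPair$), continuity of $T$ (so preimages of $\epsilon/2$-balls are open), and the equivalence of $P_K$ with Lebesgue measure (so those preimages carry positive mass); a finite cover plus a union bound then does the job. Once both summands are handled, the triangle inequality above closes the proof.
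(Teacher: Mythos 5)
Your proof is correct, and the density step takes a genuinely different, somewhat more general route than the paper's. The paper's proof of this lemma silently invokes the additional hypotheses of Theorem \ref{thm_asym_L_k} — that $K$ is bounded and each $f_i$ extends continuously to $\bar K$ — to argue that $(T_1,\dots,T_m)$ is \emph{uniformly} continuous on the compact set $\bar K$, and then covers the domain $K$ by finitely many small rectangles, each of positive $P_K$-mass. You instead exploit that the image $\mathcal{T}\RegPair$ sits inside $[0,1]^m$ and is therefore automatically totally bounded, take a finite $\epsilon/2$-net in the \emph{image}, pull each net point back to $K$, and use only \emph{pointwise} continuity of $T$ together with (R1) to manufacture positive-mass preimage balls. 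The payoff is that your argument establishes the lemma exactly as stated, for any regular pair, without importing boundedness or boundary-extendability of the $f_i$; the paper's written proof is in this respect slightly narrower than the lemma's statement. For the second summand, your pairing of the $a$-th elements of $T(X_n)$ and $\hat{\mathcal{T}}(M_n)$ and the resulting bound $\left|\ord_i(M_n,a)/n - T_i(x_a)\right| \leq \lVert R_n - R_\infty\rVert_\infty + 1/n$, obtained from $\lambda_i(T_i(x_a)) = f_i(x_a)$ and Theorem \ref{thm_1st_asym}, is an in-house derivation of the Glivenko--Cantelli estimate the paper merely cites at its Equation (\ref{eq:ord_i_appx_T_i}); both are fine, but yours is more self-contained since it reuses the paper's own 1st Asymptotic Theorem rather than an external reference.
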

\begin{proof}
Recall that, for each $i\in [m]$, $T_i = \phi_i\circ f_i$, where $\phi_i$ is a monotone increasing function. Since there is no measure jump in a regular pair (i.e. $P_K(f_i^{-1}(\ell)) = 0$ for all $i\in [m], \ell\in\R$), each $\phi_i$ is continuous and so is each $T_i$. Since each $f_i$ can be extended continuously to $\bar{K}$, we also have each $T_i$ continuously extendable to $\bar{K}$. Since $\bar{K}$ is compact, the function $(T_1, ..., T_m):\bar{K}\to\R^m$ is uniformly continuous.

Let $\epsilon>0$. We need to prove, as $n\to\infty$, w.h.p., 
\begin{equation}
\dH(\mathcal{T}\RegPair, \hat{\mathcal{T}}(M_n))<\epsilon
\end{equation}
By uniform continuity, there exists $\delta>0$ such that, for all $x,y\in K$ with $\lVert x-y\rVert_2\leq\delta$, 
\begin{equation}\label{eq:unif_conti_T_i}
\lVert (T_1(x), \cdots,T_m(x))-(T_1(y), \cdots,T_m(y))\rVert_2\leq\epsilon/2.
\end{equation}
Let $X_n = \{x_1, ..., x_n\}$ be a sample of size $n$, i.i.d. from $\RegPair$. Let us prove that, as $n\to\infty$, w.h.p., 
\begin{equation}\label{claim1:L1.19}
K\subseteq X_n+B(0,\delta)
\end{equation}
Since $K$ is bounded, we may cover $K$ by finitely many small rectangles of diameters smaller than $\delta$, where each rectangle intersects $K$ and the rectangles intersect each other only on their boundaries. Denote the rectangles by $\{R_1, ..., R_N\}$. Let $p_j\od P_K(R_j\cap K)$, which are positive by openness of $K$. Then
\begin{equation}
\Pr[(R_j\cap K)\cap X_n\neq\varnothing,\ \forall\ j\in [N]]\geq 1-\sum_{j = 1}^N (1-p_j)^n.
\end{equation}
Since $N$ is finite and each $1-p_j\in[0,1)$, as $n\to\infty$, w.h.p., $(R_j\cap K)\cap X_n\neq\varnothing$, for all $j = 1, ..., N$. Since the diameter of each $R_j\cap K$ is less than $\delta$, Equation (\ref{claim1:L1.19}) follows.

Let us prove another preparatory result: as $n\to\infty$, w.h.p., 
\begin{equation}\label{eq:ord_i_appx_T_i}
\max_{a\in [n], i\in [m]} \left|\frac{\ord_i(M_n, a)}{n}-T_i(x_a)\right|\leq\sqrt{\frac{\epsilon}{2m}}.
\end{equation}
Treating each $T_i$ as a cumulative distribution function defined on $K$, since $[m]$ is finite, Equation (\ref{eq:ord_i_appx_T_i}) is an immediate consequence of Glivenko-Cantelli Theorem. 

Now we return to the proof. Notice that $\mathcal{T}\RegPair$ is the image of $(T_1, ..., T_m):K\to [0,1]^m$. By Equation (\ref{claim1:L1.19}) and Equation (\ref{eq:unif_conti_T_i}), w.h.p., $\mathcal{T}\RegPair\subseteq (T_1, ..., T_m)(X_n)+B(0,\epsilon/2)$. By Equation (\ref{eq:ord_i_appx_T_i}), $(T_1, ..., T_m)(X_n)\subseteq \hat{\mathcal{T}}(M_n)+B(0,\epsilon/2)$. Thus, $\mathcal{T}\RegPair\subseteq\hat{\mathcal{T}}(M_n)+B(0,\epsilon)$. On the other hand, by Equation (\ref{eq:ord_i_appx_T_i}), $\hat{\mathcal{T}}(M_n)\subseteq\mathcal{T}\RegPair+B(0,\epsilon/2)\subseteq\mathcal{T}\RegPair+B(0,\epsilon)$, completing the proof.
\end{proof}

In the following, we develop the convention of restricting a multi-filtered complex to a diagonal ray as defined in Definition \ref{def:diagonal_rays}. 
\begin{definition}
Let $\mathcal{T}\subseteq [0,1]^m$ and $\mathcal{K} = \{\mathcal{K}(T)=\mathcal{K}(t_1, ..., t_m)\}_{T\in\R^m}$ be a multi-filtered complex indexed over $\R^m$ with $\mathcal{K}(T)\subseteq 2^{[m]}$ for all $T\in\R^m$. For $T = (t_1, ..., t_m)\in\mathcal{T}$, let $\ray_T$ be as in Definition \ref{def:diagonal_rays}. Define the {\bf restriction of $\mathcal{K}$ to $\ray_T$} as the $1$-dimensional filtered complex $\mathcal{K}|_{\ray_T} = \{\mathcal{K}|_{\ray_T}(t)\}_t$, indexed over $t\in [0,\max_{i\in [m]} t_i]$, by 
\begin{equation}
\mathcal{K}|_{\ray_T}(t)\od\mathcal{K}\left(t_1-\max_{i\in [m]} t_i +t,\cdots,t_m-\max_{i\in [m]} t_i+t\right). 
\end{equation}
Since we usually need to use interleaving distance to compare two filtered complexes, we extend the indexing set of $\mathcal{K}|_{\ray_T}$ to $\R$ by 
\begin{equation}
\mathcal{K}|_{\ray_T}(t)\od\begin{cases}
\varnothing & \text{ if $t<0$}\\
2^{[m]} & \text{ if $t>\max_{i\in [m]} t_i$}.
\end{cases}
\end{equation}
\end{definition}

With these conventions, we state the following lemma:
\begin{lemma}\label{lem:L_k_sup_expression}
For each $k\in\{0\}\cup\N$,
\begin{align*}
&L_k\RegPair = \sup_{\ray\in\mathcal{T}\RegPair_+}\sup\{(\beta-\alpha):(\alpha,\beta)\in\dgm(H_k(\DDow\RegPair|_{\ray}))\} \text{, and }\\
&L_k(M_n) = \sup_{\ray\in\hat{\mathcal{T}}(M_n)_+}\sup\{(\beta-\alpha):(\alpha,\beta)\in\dgm(H_k(\Dow(S(M_n))|_{\ray}))\}.
\end{align*}
\end{lemma}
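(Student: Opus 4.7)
The plan is to unfold definitions and check that, under the correspondence $x \leftrightarrow T(x) := (T_1(x),\ldots,T_m(x))$, the one-dimensional filtration $\mathcal{K}_x$ of Definition \ref{def_L_k} coincides on the nose with $\DDow\RegPair\bigl|_{\ray_{T(x)}}$, and similarly $\hat{\mathcal{K}}_a$ coincides with $\Dow(S(M_n))\bigl|_{\ray_{T_a}}$ for $T_a := (\ord_1(M_n,a)/n,\ldots,\ord_m(M_n,a)/n)$. Both equalities are then immediate reshufflings of suprema, together with the fact that $l_{\max}(k,\cdot)$ is by definition the supremum of persistence interval lengths in $\dgm(H_k(\cdot))$.

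First I would fix $x \in K$ and write $T = T(x)$, so that $\max_{i\in[m]} t_i = \max_{i\in[m]} T_i(x) = t_{\max}(x)$ by the very definition of $t_{\max}$. For any $t \in [0, t_{\max}(x)]$, substituting $t_i = T_i(x)$ into the defining formula of the restriction yields
\[
\DDow\RegPair\bigl|_{\ray_{T}}(t) \;=\; \DDow\RegPair\bigl(T_1(x)-(t_{\max}(x)-t),\ldots,T_m(x)-(t_{\max}(x)-t)\bigr),
\]
which is exactly $\mathcal{K}_x(t)$ by equation \eqref{eq:L_k_eq1}. Hence the two one-parameter filtrations agree as subcomplexes of $2^{[m]}$ at every filtration value, and therefore their $k$-th persistent homology modules are equal. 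Consequently $l_{\max}(k,\mathcal{K}_x) = \sup\{\beta - \alpha : (\alpha,\beta) \in \dgm(H_k(\DDow\RegPair|_{\ray_T}))\}$.

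Next I would take the supremum over $x \in K$. Since $\mathcal{T}\RegPair_+ = \{\ray_T : T \in \mathcal{T}\RegPair\}$ and $\mathcal{T}\RegPair$ is exactly the image of the map $x \mapsto T(x)$, the sup over $x\in K$ equals the sup over $T \in \mathcal{T}\RegPair$, which in turn equals the sup over $\ray \in \mathcal{T}\RegPair_+$ (different $x$ producing the same ray just produce the same summand, which is harmless under a supremum). Combining with Definition \ref{def_L_k} of $L_k\RegPair = \sup_{x\in K} l_{\max}(k,\mathcal{K}_x)$ gives the first claimed equality.

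The second equality is entirely parallel. For $a \in [n]$, set $T_a = (\ord_1(M_n,a)/n,\ldots,\ord_m(M_n,a)/n)$; then $\max_i (T_a)_i = \hat{t}_{\max}(a)$, and Definition \ref{def_L_k_emp} of $\hat{\mathcal{K}}_a(t)$ matches $\Dow(S(M_n))\bigl|_{\ray_{T_a}}(t)$ term-for-term. Taking $\max_{a\in[n]}$ corresponds to taking the supremum over $\ray \in \hat{\mathcal{T}}(M_n)_+$, since $\hat{\mathcal{T}}(M_n) = \{T_a : a \in [n]\}$. I do not foresee a substantive obstacle: the lemma is a purely formal identification. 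The only thing that requires any care is verifying that the two reparametrizations line up (i.e.\ $T_i(x) - (t_{\max}(x) - t) = T_i(x) - \max_i T_i(x) + t$), which is trivial, and that the extension conventions outside $[0,t_{\max}(x)]$ do not affect the persistence intervals — they do not, since such intervals for $H_k$ with $k\geq 0$ arise only from the non-trivial part of the filtration.
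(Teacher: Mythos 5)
Your proposal is correct and matches the paper's approach: the paper's own proof is a one-line appeal to Definitions \ref{def_L_k} and \ref{def_L_k_emp}, and your argument simply spells out the underlying identification $\mathcal{K}_x = \DDow\RegPair|_{\ray_{T(x)}}$ and $\hat{\mathcal{K}}_a = \Dow(S(M_n))|_{\ray_{T_a}}$ followed by the reindexing of the suprema, which is exactly what that one line implicitly relies on. The only point worth stating a bit more carefully is that the persistence diagrams in the lemma must be read over the original index interval $[0,\max_i t_i]$ rather than the extended domain $\R$ (otherwise the essential $H_0$ class would contribute an infinite interval), but that is a convention issue in the paper itself and does not affect the validity of your argument.
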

\begin{proof}
The first equality follows from Equation (\ref{eq:L_k_eq2}) in Definition \ref{def_L_k} and the second equality can be obained from Equation (\ref{eq:L_k_emp_eq2}) in Definition \ref{def_L_k_emp}.
\end{proof}
The next lemma will be used to connect Lemma \ref{lem:Hausdorff_convergence} and Lemma \ref{lem:L_k_sup_expression}. 
\begin{lemma}\label{lem:change_mathcal_T}
Let $\mathcal{T}_1,\mathcal{T}_2\subseteq [0,1]^m$ such that $\dH(\mathcal{T}_1,\mathcal{T}_2)<\epsilon$. Let $\mathcal{K} = \{\mathcal{K}(T)\}_{T\in\R^m}$ be a multi-filtered complex indexed over $\R^m$ with $\mathcal{K}(T)\subseteq 2^{[m]}$ for all $T$. Then
\begin{align*}
&\left|\sup_{\ray\in (\mathcal{T}_1)_+}\sup\{b-a:(a,b)\in\dgm(H_k(\mathcal{K}|_\ray))\}-\sup_{\ray\in (\mathcal{T}_2)_+}\sup\{b-a:(a,b)\in\dgm(H_k(\mathcal{K}|_\ray))\}\right|<2\epsilon.
\end{align*}
\end{lemma}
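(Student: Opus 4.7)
The plan is to reduce the claim to a pointwise comparison followed by an application of the Isometry Theorem. From $\dH(\mathcal{T}_1,\mathcal{T}_2)<\epsilon$, fix $\epsilon_0\in[0,\epsilon)$ such that, for every $T\in\mathcal{T}_1$, there exists $T'\in\mathcal{T}_2$ with $\|T-T'\|_\infty\leq \|T-T'\|_2\leq \epsilon_0$, and symmetrically. Writing $L_k(S)\od\sup\{b-a:(a,b)\in\dgm(H_k(\mathcal{K}|_{\ray_S}))\}$, the key step will be to show that any such matched pair satisfies
\begin{equation*}
|L_k(T)-L_k(T')|\leq 2\|T-T'\|_\infty.
\end{equation*}
Taking $\sup$ over $T\in\mathcal{T}_1$ (with its chosen $T'\in\mathcal{T}_2$) and its symmetric counterpart then gives the conclusion.

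For the pointwise step, set $M=\max_i t_i$, $M'=\max_i t'_i$ and $\delta=\|T-T'\|_\infty$; note $|M-M'|\leq\delta$. The main idea is to horizontally reparameterize to align the ``saturated'' ends of the two rays: define the $\R$-indexed filtration
\begin{equation*}
\widetilde{\mathcal{K}}_{T'}(t)\od\mathcal{K}|_{\ray_{T'}}\bigl(t+(M'-M)\bigr),
\end{equation*}
which is a rigid horizontal shift of $\mathcal{K}|_{\ray_{T'}}$ and hence has persistence diagram obtained by translating $\dgm(H_k(\mathcal{K}|_{\ray_{T'}}))$ along the diagonal, preserving every bar length. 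Unpacking the definitions, whenever both filtrations are in their ``interior'' range,
\begin{equation*}
\mathcal{K}|_{\ray_T}(t)=\mathcal{K}\bigl(T-(M-t)(1,\ldots,1)\bigr),\quad \widetilde{\mathcal{K}}_{T'}(t)=\mathcal{K}\bigl(T'-(M-t)(1,\ldots,1)\bigr),
\end{equation*}
so the multi-indices differ only by the vector $T-T'$. Since $T\leq T'+\delta(1,\ldots,1)$ componentwise, the monotonicity of the multi-filtration $\mathcal{K}$ gives
\begin{equation*}
\mathcal{K}|_{\ray_T}(t)\subseteq \mathcal{K}\bigl(T'-(M-t-\delta)(1,\ldots,1)\bigr)=\widetilde{\mathcal{K}}_{T'}(t+\delta),
\end{equation*}
and symmetrically. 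The extended (trivial) values $\varnothing$ and $2^{[m]}$ at the boundary are compatible because $|M-M'|\leq\delta$ aligns the saturated endpoints, so $\mathcal{K}|_{\ray_T}$ and $\widetilde{\mathcal{K}}_{T'}$ are $\delta$-interleaved as $1$-dimensional filtered complexes over $\R$.

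Applying $H_k(\cdot;\F)$ yields finite-dimensional (hence quadrant-tame) persistence modules that remain $\delta$-interleaved, so by Theorem \ref{thm:isometry_thm} the bottleneck distance between $\dgm(H_k(\mathcal{K}|_{\ray_T}))$ and $\dgm(H_k(\widetilde{\mathcal{K}}_{T'}))$ is at most $\delta$. In any $\delta$-matching each matched bar's length changes by at most $2\delta$, and unmatched bars already have length at most $2\delta$; therefore the maximal bar lengths of the two diagrams differ by at most $2\delta$. Combined with the length-preserving reparameterization this gives $|L_k(T)-L_k(T')|\leq 2\delta\leq 2\epsilon_0<2\epsilon$, and the lemma follows by taking suprema and using symmetry.

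The main technical hurdle is the boundary handling: the two rays have different physical lengths $M\neq M'$, so the extensions $\varnothing$ and $2^{[m]}$ are attained at different values of $t$. The shift by $M-M'$ (of absolute value at most $\delta$) realigns the saturated ends of the filtrations, and the interleaving shift $\delta$ absorbs the remaining $|M-M'|$ gap on the empty side, so the subcases ($t$ far negative, near $0$, interior, and above $M$) can all be verified uniformly. Once this alignment is fixed, the rest is a direct combination of multi-filtration monotonicity and the Isometry Theorem.
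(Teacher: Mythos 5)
Your proof is correct, and it is actually more careful than the paper's. The paper's argument (Section 6.4) asserts, without justification, that $\|T_1-T_2\|_2<\epsilon$ implies $\dINT(\mathcal{K}|_{\ray_1}, \mathcal{K}|_{\ray_2})<\epsilon$. That step is false in general, precisely because of the boundary issue you identify: $\ray_{T}$ and $\ray_{T'}$ have different saturation lengths $M=\max_i t_i$ and $M'=\max_i t_i'$, and the offset between $\phi_T(t)=T-(M-t)\vec 1$ and $\phi_{T'}(t)=T'-(M'-t)\vec 1$ is $(T-T')-(M-M')\vec 1$, which in $\|\cdot\|_\infty$ can be as large as $\|T-T'\|_\infty+|M-M'|\leq 2\|T-T'\|_\infty$. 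Concretely, with $m=2$, $T=(1,0)$, $T'=(0.5,0.5)$ one has $\|T-T'\|_2=\sqrt{0.5}$, yet for a generic $\mathcal{K}$ (e.g.\ the nerve of two growing intervals in $(0,1)$), the face $\{2\}$ enters $\mathcal{K}|_{\ray_{T'}}$ at $t=0^+$ but is absent from $\mathcal{K}|_{\ray_{T}}(t)$ for all $t\leq 1$, so $\dINT(\mathcal{K}|_{\ray_T},\mathcal{K}|_{\ray_{T'}})=1>\sqrt{0.5}$. Your horizontal reparameterization $\widetilde{\mathcal{K}}_{T'}(t)=\mathcal{K}|_{\ray_{T'}}(t+M'-M)$ resolves this cleanly: it aligns the saturated ends so the two filtrations differ in their multi-indices exactly by $T-T'$, giving a genuine $\|T-T'\|_\infty$-interleaving, and since a rigid horizontal shift translates the persistence diagram along the diagonal, it preserves bar lengths. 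From there your use of the Isometry Theorem and the $2\delta$ bottleneck-to-bar-length bound is exactly as in the paper, and the lemma follows. In short: same high-level strategy (Isometry Theorem on restricted rays), but your intermediate reparameterization is the step needed to make the interleaving estimate actually hold.
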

\begin{proof}
For any constant $\eta_1>0$, we may choose $\ray_1\in (\mathcal{T}_1)_+$ such that 
\begin{equation}\label{eq:eta1}
\sup_{\ray\in (\mathcal{T}_1)_+}\sup\{b-a:(a,b)\in\dgm(H_k(\mathcal{K}|_\ray))\}\leq\sup\{b-a:(a,b)\in\dgm(H_k(\mathcal{K}|_{\ray_1}))\}+\eta_1.
\end{equation}
Let $T_1$ be the element in $\mathcal{T}_1$ that $\ray_1$ is constructed from. Since $\dH(\mathcal{T}_1, \mathcal{T}_2)<\epsilon$, there exists $T_2\in\mathcal{T}_2$ such that $\lVert T_1-T_2\rVert_2<\epsilon$. Let $\ray_2\in (\mathcal{T}_2)_+$ be constructed from $T_2$. Then $\dINT(\mathcal{K}|_{\ray_1}, \mathcal{K}|_{\ray_2})<\epsilon$, implying 
\begin{align*}
\di(H_k(\mathcal{K}|_{\ray_1}), H_k(\mathcal{K}|_{\ray_2}))<\epsilon.
\end{align*}
By Isometry Theorem (Theorem \ref{thm:isometry_thm}), 
\begin{equation}\label{eq:bottleneck_on_different_rays}
\db(\dgm(H_k(\mathcal{K}|_{\ray_1})), \dgm(H_k(\mathcal{K}|_{\ray_2})))<\epsilon.
\end{equation}
For any constant $\eta_2>0$, there exists $(a_1, b_1)\in\dgm(H_k(\mathcal{K}|_{\ray_1}))$ such that 
\begin{equation}\label{eq:eta2}
\sup\{b-a:(a,b)\in\dgm(H_k(\mathcal{K}|_{\ray_1}))\}\leq b_1-a_1+\eta_2.
\end{equation}
By Equation (\ref{eq:bottleneck_on_different_rays}), there exists $(a_2, b_2)\in\dgm(H_k(\mathcal{K}|_{\ray_2}))$ such that $\max\{|a_2-a_1|, |b_2-b_1|\}<\epsilon$. Therefore, 
\begin{equation}\label{eq:a_1_2_b_1_2}
b_1-a_1\leq b_2-a_2+2\epsilon.
\end{equation}
Combining Equation (\ref{eq:eta1}), (\ref{eq:eta2}) and (\ref{eq:a_1_2_b_1_2}), we obtain
\begin{align*}
& \sup_{\ray\in (\mathcal{T}_1)_+}\sup\{b-a:(a,b)\in\dgm(H_k(\mathcal{K}|_\ray))\}\\
& \leq b_1-a_1+\eta_1+\eta_2\\
& \leq b_2-a_2+2\epsilon+\eta_1+\eta_2\\
& \leq \sup\{b-a:(a,b)\in\dgm(H_k(\mathcal{K}|_{\ray_2}))\}+2\epsilon+\eta_1+\eta_2\\
& \leq \sup_{\ray\in (\mathcal{T}_2)_+}\sup\{b-a:(a,b)\in\dgm(H_k(\mathcal{K}|_\ray))\}+2\epsilon+\eta_1+\eta_2
\end{align*}
Since $\eta_1, \eta_2>0$ are arbitrary, we obtain
\[
\sup_{\ray\in (\mathcal{T}_1)_+}\sup\{b-a:(a,b)\in\dgm(H_k(\mathcal{K}|_\ray))\}\leq \sup_{\ray\in (\mathcal{T}_2)_+}\sup\{b-a:(a,b)\in\dgm(H_k(\mathcal{K}|_\ray))\}+2\epsilon.
\]
Reversing the role of $\mathcal{T}_1$ and $\mathcal{T}_2$, we may obtain the other direction, completing the proof.
\end{proof}

\begin{lemma}\label{lem:change_complex}
Let $\mathcal{K}$ and $\mathcal{L}$ be multi-filtered complexes indexed over $\R^m$. Let $\mathcal{T}\subseteq [0,1]^m$. If $\dINT(\mathcal{K},\mathcal{L})<\epsilon$, then, for all $k\in\{0\}\cup\N$, 
\begin{align*}
&\left|\sup_{\ray\in \mathcal{T}_+}\sup\{b-a:(a,b)\in\dgm(H_k(\mathcal{K}|_\ray))\}-\sup_{\ray\in \mathcal{T}_+}\sup\{b-a:(a,b)\in\dgm(H_k(\mathcal{L}|_\ray))\}\right|<2\epsilon.
\end{align*}
\end{lemma}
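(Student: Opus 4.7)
My plan is to reduce this to the 1-parameter Isometry Theorem exactly as in the proof of Lemma \ref{lem:change_mathcal_T}, but now exploiting that restriction to a diagonal ray is itself a contraction of the interleaving distance.

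First I would verify the following key observation: for any ray $\ray_T \in \mathcal{T}_+$ with $T = (t_1, \ldots, t_m)$ and $M = \max_i t_i$, one has $d_{\INT}(\mathcal{K}|_{\ray_T}, \mathcal{L}|_{\ray_T}) \le d_{\INT}(\mathcal{K}, \mathcal{L}) < \epsilon$. This follows by unwinding definitions: the multi-filtered inclusion $\mathcal{K}(s_1, \ldots, s_m) \subseteq \mathcal{L}(s_1 + \epsilon, \ldots, s_m + \epsilon)$, specialized to $s_i = t_i - M + t$, becomes $\mathcal{K}|_{\ray_T}(t) \subseteq \mathcal{L}|_{\ray_T}(t + \epsilon)$, which together with the symmetric inclusion is precisely an $\epsilon$-interleaving of $1$-parameter filtered complexes. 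Applying the homology functor $H_k(\,\cdot\,;\F)$ then produces an $\epsilon$-interleaving of the corresponding persistence modules, and the Isometry Theorem (Theorem \ref{thm:isometry_thm}) converts this into $d_b(\dgm(H_k(\mathcal{K}|_{\ray})), \dgm(H_k(\mathcal{L}|_{\ray}))) < \epsilon$ for every $\ray \in \mathcal{T}_+$.

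Next I would argue that maximal persistence length is $2\epsilon$-Lipschitz in bottleneck distance, using the same $\eta$-approximate-sup argument that appears in the proof of Lemma \ref{lem:change_mathcal_T}. For any $\eta > 0$ and any fixed $\ray$, choose $(a, b) \in \dgm(H_k(\mathcal{K}|_\ray))$ within $\eta$ of the ray's sup-persistence. The $\epsilon$-matching underlying the bottleneck bound either pairs $(a, b)$ with a point $(a', b') \in \dgm(H_k(\mathcal{L}|_\ray))$ with $\max\{|a-a'|, |b-b'|\} < \epsilon$, forcing $b - a \le (b' - a') + 2\epsilon$, or sends $(a, b)$ to the diagonal, forcing $b - a < 2\epsilon$. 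In either case one obtains a bound on the $\ray$-sup for $\mathcal{K}$ by the $\ray$-sup for $\mathcal{L}$ plus $2\epsilon$; taking $\sup_{\ray \in \mathcal{T}_+}$ and $\eta \to 0$, then swapping the roles of $\mathcal{K}$ and $\mathcal{L}$, gives the desired two-sided estimate.

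The hard part, I expect, is purely bookkeeping in the last step, specifically the case where the chosen point is sent to the diagonal: no companion point exists in the opposing diagram, yet the required inequality must still hold because the ``missing'' persistence is itself $< 2\epsilon$. The remainder is the mechanical chain restriction-to-ray $\to$ Isometry Theorem $\to$ matching argument, the standard workhorse of topological stability results.
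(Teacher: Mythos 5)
Your proposal is correct and follows essentially the same route as the paper's proof: restrict to each diagonal ray, observe that restriction cannot increase the simplicial interleaving distance, apply $H_k$ and the Isometry Theorem to obtain a bottleneck bound on each ray, then transfer the bound to the double supremum via an $\eta$-approximate-sup argument and symmetrize. You are in fact slightly more careful than the paper in explicitly handling the case where the bottleneck matching sends the chosen point to the diagonal --- the paper's proof implicitly assumes an off-diagonal partner $(a_1,b_1)$ exists, but as you note, when the point is matched to the diagonal one already has $b_0-a_0<2\epsilon$, so the conclusion holds regardless.
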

\begin{proof}
For a constant $\eta>0$, let $\ray\in\mathcal{T}_+$ and $(a_0, b_0)\in\dgm(H_k(\mathcal{L}|_\ray))$ such that 
\begin{equation}\label{eq:change_complex_1}
\sup_{\ray\in\mathcal{T}_+}\sup\{b-a:(a,b)\in \dgm(H_k(\mathcal{L}|_\ray))\}\leq b_0-a_0+\eta.
\end{equation}
Consider $\mathcal{K}|_{\ray}$. Since $\dINT(\mathcal{K},\mathcal{L})<\epsilon$, $\dINT(\mathcal{K}|_\ray,\mathcal{L}|_\ray)<\epsilon$. Taking the $H_k$ functor, by the Isometry Theorem, 
\begin{align*}
\db(\dgm(H_k(\mathcal{K}|_\ray)), \dgm(H_k(\mathcal{L}|_\ray)))<\epsilon,
\end{align*}
which implies that there exists $(a_1, b_1)\in\dgm(H_k(\mathcal{K}|_\ray))$ such that $\max(|a_0-a_1|, |b_0-b_1|)<\epsilon$ and hence 
\begin{equation}\label{eq:change_complex_2}
b_0-a_0\leq b_1-a_1+2\epsilon.
\end{equation}
Therefore, 
\begin{align*}
& \sup_{\ray\in\mathcal{T}_+}\sup\{b-a:(a,b)\in\dgm(H_k(\mathcal{L}|_\ray))\}\\
& \leq b_1-a_1+2\epsilon+\eta & \text{by Equation (\ref{eq:change_complex_1}) and (\ref{eq:change_complex_2})}\\
& \leq \sup_{\ray\in\mathcal{T}_+}\sup\{b-a:(a,b)\in\dgm(H_k(\mathcal{K}|_\ray))\}.
\end{align*}
Since $\eta>0$ is arbitrary, we obtain
\begin{equation}
\sup_{\ray\in\mathcal{T}_+}\sup\{b-a:(a,b)\in\dgm(H_k(\mathcal{L}|_\ray))\}\leq \sup_{\ray\in\mathcal{T}_+}\sup\{b-a:(a,b)\in\dgm(H_k(\mathcal{K}|_\ray))\}+2\epsilon.
\end{equation}
Reversing the role of $\mathcal{K}$ and $\mathcal{L}$, the other direction can be otained, completing the proof. 
\end{proof}

With all the above lemmas, we may now present a rigorous proof of Theorem \ref{thm_asym_L_k}. Let us restate Theorem \ref{thm_asym_L_k} for easy reference.
\begin{theorem*}[Theorem \ref{thm_asym_L_k}]
Let $M_n\in\mathcal{M}_{m,n}^o$ be sampled from a regular pair $\RegPair$. Assume that $K$ is bounded and each $f_i$ can be continuously extended to the closure $\bar{K}$. Then, for all $k\in\{0\}\cup\N$, as $n\to\infty$, $L_k(M_n)$ converges to $L_k(\mathcal{F},P_K)$ in probability; namely, for all $\epsilon>0$, 
\[
\lim_{n\to\infty} \Pr\left[\left|L_k(M_n)-L_k(\mathcal{F},P_K)\right|<\epsilon\right]=1.
\]
Moreover, the rate of convergence is independent of $k$. 
\end{theorem*}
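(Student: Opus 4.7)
The plan is to combine the decomposition in Lemma \ref{lem:L_k_sup_expression}, which expresses both $L_k\RegPair$ and $L_k(M_n)$ as double suprema over diagonal rays of a filtered complex, with the two stability estimates Lemma \ref{lem:change_complex} and Lemma \ref{lem:change_mathcal_T}, glued together by a standard triangle inequality through an intermediate quantity.

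Concretely, for each sample size $n$ introduce the ``hybrid'' quantity
\begin{equation*}
\tilde{L}_k(M_n) \od \sup_{\ray\in\mathcal{T}\RegPair_+}\sup\bigl\{\beta-\alpha:(\alpha,\beta)\in\dgm(H_k(\Dow(S(M_n))|_\ray))\bigr\},
\end{equation*}
which keeps the ``true'' set of rays $\mathcal{T}\RegPair_+$ but uses the empirical complex $\Dow(S(M_n))$. By Lemma \ref{lem:L_k_sup_expression} and the triangle inequality,
\begin{equation*}
\bigl|L_k(M_n)-L_k\RegPair\bigr|\leq \bigl|L_k(M_n)-\tilde{L}_k(M_n)\bigr|+\bigl|\tilde{L}_k(M_n)-L_k\RegPair\bigr|.
\end{equation*}
I would bound the first summand by applying Lemma \ref{lem:change_mathcal_T} with $\mathcal{T}_1=\hat{\mathcal{T}}(M_n)$, $\mathcal{T}_2=\mathcal{T}\RegPair$ and $\mathcal{K}=\Dow(S(M_n))$, controlling the gap by $2\dH(\mathcal{T}\RegPair,\hat{\mathcal{T}}(M_n))$; Lemma \ref{lem:Hausdorff_convergence} (which needs the hypotheses that $K$ is bounded and each $f_i$ extends continuously to $\bar K$, both assumed in the theorem) gives that this Hausdorff distance tends to $0$ in probability. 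The second summand is bounded by applying Lemma \ref{lem:change_complex} with $\mathcal{K}=\Dow(S(M_n))$, $\mathcal{L}=\DDow\RegPair$ and $\mathcal{T}=\mathcal{T}\RegPair$, controlling the gap by $2\,\dINT(\Dow(S(M_n)),\DDow\RegPair)$; the Interleaving Convergence Theorem (Theorem \ref{thm_inter_con}) guarantees this vanishes in probability as $n\to\infty$.

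Given any $\varepsilon>0$, choosing the exceptional events in Lemma \ref{lem:Hausdorff_convergence} and Theorem \ref{thm_inter_con} to each have probability below $\varepsilon/2$ on the event $\{\dH<\varepsilon/4\}\cap\{\dINT<\varepsilon/4\}$ then forces $|L_k(M_n)-L_k\RegPair|<\varepsilon$. Crucially, neither the Hausdorff distance $\dH(\mathcal{T}\RegPair,\hat{\mathcal{T}}(M_n))$ nor the interleaving distance $\dINT(\Dow(S(M_n)),\DDow\RegPair)$ depends on $k$, while the bounds from Lemma \ref{lem:change_mathcal_T} and Lemma \ref{lem:change_complex} are of the form $2\epsilon$ independent of $k$; consequently the same $n$ simultaneously enforces the bound across all $k\in\{0\}\cup\N$, proving the uniform-in-$k$ statement of the footnote.

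The main obstacle is purely notational: tracking that the $\varepsilon$--$\delta$ quantifiers can be arranged so that a single high-probability event covers all $k$ at once. This reduces to observing that in both Lemma \ref{lem:change_complex} and Lemma \ref{lem:change_mathcal_T} the same $\epsilon$-shift witnesses the bound for every homological dimension, so the quantifier ``for all $k$'' can be pulled outside the probability. No further sampling-theoretic work is required beyond Theorem \ref{thm_inter_con} and Lemma \ref{lem:Hausdorff_convergence}, both already established.
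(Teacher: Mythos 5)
Your proposal is correct and follows essentially the same decomposition as the paper: the paper's proof also interpolates via the hybrid quantity $\sup_{\ray\in(\mathcal{T}\RegPair)_+}\sup\{b-a:(a,b)\in\dgm(H_k(\Dow(S(M_n))|_\ray))\}$, applying Lemma \ref{lem:change_complex} (with Theorem \ref{thm_inter_con}) for one summand and Lemma \ref{lem:change_mathcal_T} (with Lemma \ref{lem:Hausdorff_convergence}) for the other, and the $k$-independence is handled exactly as you describe. The only difference is that you name the intermediate quantity $\tilde L_k(M_n)$ explicitly, which is a cosmetic improvement.
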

\begin{proof}[Proof of Theorem \ref{thm_asym_L_k}]
For notational simplicity, let $\mathcal{T}_1 = \mathcal{T}\RegPair$, $\mathcal{T}_2 = \hat{\mathcal{T}}(M_n)$, $\mathcal{K} = \DDow\RegPair$ and $\mathcal{L} = \Dow(S(M_n))$. Let $\epsilon>0$. By Lemma \ref{lem:L_k_sup_expression}, we need to prove, as $n\to\infty$, w.h.p., 
\begin{align*}
& \left|\sup_{\ray\in(\mathcal{T}_1)_+}\sup\{b-a:(a,b)\in\dgm(H_k(\mathcal{K}|_\ray))\}
-
\sup_{\ray\in (\mathcal{T}_2)_+}\sup\{b-a:(a,b)\in\dgm(H_k(\mathcal{L}|_\ray))\}\right|\leq\epsilon.
\end{align*}
By Interleaving Convergence Theorem (Thoerem \ref{thm_inter_con}), as $n\to\infty$, w.h.p., $\dINT(\mathcal{K}, \mathcal{L})\leq\epsilon/4$. Therefore, by Lemma \ref{lem:change_complex}, 
\begin{equation}\label{eq:Lk_eq1}
\left|\sup_{\ray\in(\mathcal{T}_1)_+}\sup\{b-a:(a,b)\in\dgm(H_k(\mathcal{K}|_\ray))\}
-
\sup_{\ray\in (\mathcal{T}_1)_+}\sup\{b-a:(a,b)\in\dgm(H_k(\mathcal{L}|_\ray))\}\right|\leq \epsilon/2.
\end{equation}
On the other hand, by Lemma \ref{lem:Hausdorff_convergence}, as $n\to\infty$, $\dH(\mathcal{T}_1, \mathcal{T}_2)\leq\epsilon/4$. Therefore, by Lemma \ref{lem:change_mathcal_T},  
\begin{equation}\label{eq:Lk_eq2}
\left|\sup_{\ray\in(\mathcal{T}_1)_+}\sup\{b-a:(a,b)\in\dgm(H_k(\mathcal{L}|_\ray))\}
-
\sup_{\ray\in (\mathcal{T}_2)_+}\sup\{b-a:(a,b)\in\dgm(H_k(\mathcal{L}|_\ray))\}\right|\leq \epsilon/2.
\end{equation}
Hence, combining Equation (\ref{eq:Lk_eq1}) and Equation (\ref{eq:Lk_eq2}), the result follows. 
\end{proof}

\subsection{A lemma used in the proof of Lemma \ref{lem_cent_0_local}}\label{App_Sec:Gap_Lemma4_5}

\begin{lemma}\label{lem:Gap_Lemma4_5}
For $t_1, ..., t_m\in (0,1)$ and $\sigma\subseteq [m]$, if $\bigcap_{i\in [m]} K^{(i)}(t_i)\neq\varnothing$, then there exists $\epsilon>0$ such that $\bigcap_{i\in\sigma} K^{(i)}(t_i-\epsilon)\neq\varnothing$. In addition, by monotonicity of $K^{(i)}(t)$, we also have $\bigcap_{i\in\sigma} K^{(i)}(t_i-\eta)\neq\varnothing$, for all $0<\eta\leq\epsilon$. 
\end{lemma}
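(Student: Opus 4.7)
The plan is to reduce the statement to a pointwise continuity argument using the monotone continuous functions $\lambda_i$ from Lemma \ref{lem:ell_t}. Note that since $\sigma \subseteq [m]$, we trivially have
\[
\bigcap_{i \in \sigma} K^{(i)}(t_i) \;\supseteq\; \bigcap_{i \in [m]} K^{(i)}(t_i) \;\neq\; \varnothing,
\]
so the hypothesis lets us pick a witness point $x_0 \in \bigcap_{i \in \sigma} K^{(i)}(t_i)$. By definition of $K^{(i)}(t_i) = f_i^{-1}(-\infty, \lambda_i(t_i))$, this means $f_i(x_0) < \lambda_i(t_i)$ for every $i \in \sigma$.

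Next I would exploit that each $\lambda_i \colon (0,1) \to \R$ is strictly increasing and continuous, which is guaranteed by Lemma \ref{lem:ell_t}. Setting $\delta_i \od \lambda_i(t_i) - f_i(x_0) > 0$ for each $i \in \sigma$, continuity of $\lambda_i$ at $t_i \in (0,1)$ produces an $\epsilon_i > 0$ (with $t_i - \epsilon_i$ still in $(0,1)$) such that $\lambda_i(t_i) - \lambda_i(t_i - \epsilon_i) < \delta_i$, hence $f_i(x_0) < \lambda_i(t_i - \epsilon_i)$. Taking $\epsilon \od \min_{i \in \sigma} \epsilon_i > 0$, which is well-defined since $\sigma$ is finite, we obtain $x_0 \in K^{(i)}(t_i - \epsilon)$ for every $i \in \sigma$ (using monotonicity of $\lambda_i$ so that $\lambda_i(t_i - \epsilon) \geq \lambda_i(t_i - \epsilon_i)$), which gives the desired non-emptiness of $\bigcap_{i \in \sigma} K^{(i)}(t_i - \epsilon)$. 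The appended statement about $0 < \eta \le \epsilon$ is then immediate because $t \mapsto K^{(i)}(t)$ is monotone-increasing.

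There is no real obstacle: the only delicate point is making sure the continuity of $\lambda_i$ is in hand, but this is exactly what Lemma \ref{lem:ell_t} provides. One minor technicality to mention is that we need $t_i - \epsilon$ to remain in the domain $(0,1)$ of $\lambda_i$, which is automatic since $t_i > 0$ and $\epsilon$ is chosen small. Otherwise the argument is a direct consequence of the openness of each sublevel set $K^{(i)}(t_i)$ together with the continuity of the rescaling functions.
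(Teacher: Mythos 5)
Your proof is correct and uses the same essential ingredients as the paper's: the paper first establishes that $K^{(i)}(t_i - \epsilon_n) \nearrow K^{(i)}(t_i)$ as $\epsilon_n \searrow 0$ via the continuity of $\lambda_i$ from Lemma \ref{lem:ell_t}, and then appeals to the monotone union of the finite intersection being nonempty, which implicitly amounts to selecting a witness point exactly as you do. Your version is just a slightly more direct packaging of that same argument—pick $x_0$ first, apply continuity of each $\lambda_i$ at $t_i$, and take a minimum over the finite set $\sigma$.
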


\begin{proof}
Let $\epsilon_n$ be a sequence with $\epsilon_n\searrow 0$. Let us first prove that $K^{(i)}(t_i-\epsilon_n)\nearrow K^{(i)}(t_i)$; equivalently, $K^{(i)}(t_i) = \bigcup_{n=1}^\infty K^{(i)}(t_i-\epsilon_n)$. For any $n$, $K^{(i)}(t_i-\epsilon_n)\subseteq K^{(i)}(t_i)$ by definition. Therefore, $\bigcup_{n=1}^\infty K^{(i)}(t_i-\epsilon_n)\subseteq K^{(i)}(t_i)$. For the other inclusion, assume $x\in K^{(i)}(t_i)=f^{-1}(-\infty,\lambda_i(t_i))$. Then $f_i(x)<\lambda_i(t_i)$. By Lemma \ref{lem:ell_t}, $\lambda_i$ is continuous and strictly increasing. Hence, there exists $n$ such that $\lambda_i(t_i-\epsilon_n)>f_i(x)$; in other words, $x\in K^{(i)}(t_i-\epsilon_n)$. Therefore, $x\in\bigcup_{n=1}^\infty K^{(i)}(t_i-\epsilon_n)$, proving the claim. 

Now we have, as $n\nearrow\infty$, $K^{(i)}(t_i-\epsilon_n)\nearrow K^{(i)}(t_i)$. Thus $\bigcap_{i\in\sigma} K^{(i)}(t_i-\epsilon_n)\nearrow\bigcap_{i\in\sigma} K^{(i)}(t_i) $. 
Since $\bigcap_{i\in\sigma} K^{(i)}(t_i)\neq\varnothing$, there must exist $n$ such that $\bigcap_{i\in\sigma} K^{(i)}(t_i-\epsilon_n)\neq\varnothing$. Taking $\epsilon = \epsilon_n$, the result follows.
\end{proof}

\subsection{$\Cent_1$ is open}
This subsection is devoted to the proof of openness of $\Cent_1$. 
\begin{lemma}\label{lem:cent_0_open}
Let $\{f_i:K\to\R\}_{i\in [m]}$ be a collection of quasi-convex $C^1$ functions, where $K$ is open convex in $\R^d$. Then the set $\Cent_1=\left\{x\in K:\cone(\{\nabla f_i(x)\}_{i\in [m]})=\R^d\right\}$ is open in $K$. In particular, $\Cent_1\neq\varnothing$ is equivalent to $P_K(\Cent_1)>0$. 
\end{lemma}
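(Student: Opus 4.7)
The plan is to reformulate membership in $\Cent_1$ as a continuous \emph{strict} inequality on the gradients, and then to leverage continuity of the $\nabla f_i$ to extend this inequality from a point to a neighborhood.

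The key reformulation I would establish first is: for any finite set $V = \{v_1,\ldots,v_m\} \subset \R^d$,
\[
\cone(V) = \R^d \iff \delta(V)\od \min_{w \in S^{d-1}} \max_{i\in[m]} \langle w, v_i\rangle > 0.
\]
The $\min$ is attained since $w \mapsto \max_i \langle w, v_i\rangle$ is continuous on the compact sphere. For $(\Leftarrow)$, if $\delta(V)>0$ but $\cone(V) \neq \R^d$, then---since $\cone(V)$ is a closed convex cone (finitely generated)---the separating hyperplane theorem gives a nonzero $w$ with $\langle w, v_i\rangle \leq 0$ for all $i$; normalizing yields $\max_i \langle -w/\lVert w\rVert, v_i\rangle \leq 0$, contradicting $\delta(V)>0$. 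For $(\Rightarrow)$, the polar cone $\{w:\langle w,v_i\rangle \geq 0 \ \forall\, i\}$ equals $\{0\}$ exactly when $\cone(V) = \R^d$, so for every nonzero $w$ some $i$ has $\langle -w, v_i\rangle < 0$, i.e., $\langle w, v_i\rangle > 0$; compactness of $S^{d-1}$ then upgrades this pointwise positivity to the uniform bound $\delta(V)>0$.

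With the reformulation in hand, openness becomes a routine perturbation argument. Fix $x_0 \in \Cent_1$, set $\delta = \delta(\{\nabla f_i(x_0)\}_{i\in[m]}) > 0$, and use $C^1$ regularity of the $f_i$ to choose a neighborhood $U$ of $x_0$ in $K$ on which $\lVert \nabla f_i(x) - \nabla f_i(x_0)\rVert < \delta/2$ for every $i$. For any $x \in U$ and $w \in S^{d-1}$, pick $i^*$ attaining $\max_i \langle w, \nabla f_i(x_0)\rangle$; Cauchy--Schwarz then yields
\[
\max_i \langle w, \nabla f_i(x)\rangle \geq \langle w, \nabla f_{i^*}(x_0)\rangle - \tfrac{\delta}{2} \geq \tfrac{\delta}{2} > 0,
\]
so by the reformulation $\cone(\{\nabla f_i(x)\}_{i\in[m]}) = \R^d$, i.e., $x \in \Cent_1$. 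Hence $U \subseteq \Cent_1$ and $\Cent_1$ is open.

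For the ``in particular'' clause, openness combined with assumption (R1) finishes it immediately: a nonempty open subset of $K$ has positive Lebesgue measure and therefore positive $P_K$-measure by equivalence of the two, and the converse is trivial. The only subtle step in the plan is securing the \emph{strict} inequality together with a uniform lower bound $\delta>0$ at $x_0$---which compactness of $S^{d-1}$ provides---since a non-strict pointwise version would not survive perturbation from $x_0$ to nearby $x$.
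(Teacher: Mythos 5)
Your proof is correct and follows essentially the same strategy as the paper's: both reduce membership in $\Cent_1$ to positivity of the min-max functional $\rho(x)=\min_{u\in S^{d-1}}\max_{i}\langle u,\nabla f_i(x)\rangle$ and then use $C^1$ regularity. The only differences are cosmetic --- you prove the characterization via polar-cone duality and the separating hyperplane theorem where the paper argues directly with the convex hull of the gradients, and you establish openness with a quantitative $\delta/2$ perturbation bound where the paper uses a sequential-compactness contradiction; note also the small sign slip in the $(\Leftarrow)$ direction, where the normalized vector should be $w/\lVert w\rVert$ (not $-w/\lVert w\rVert$) to conclude $\max_i\langle w/\lVert w\rVert,v_i\rangle\le 0$.
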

\begin{proof}
Define a function $h:K\times S^{d-1}\to\R$ by $h(x,u) = \max_{i\in [m]} \langle u,\nabla f_i(x)\rangle$. Since each $f_i$ is $C^1$, the functions $(x,u)\mapsto \langle u,\nabla f_i(x)\rangle$ are continuous and hence $h$ is also continuous. For $x\in K$, we define $\rho(x)=\min_{u\in S^{d-1}} h(x,u)$. Let us prove that, for $x\in K$, $\rho(x)>0$ if and only if $\cone(\{\nabla f_i(x)\}_{i\in [m]}) = \R^d$. 

For one direction, let $x\in K$ satisfy $\rho(x)>0$, or, equivalently, $\max_{i\in [m]} \langle u,\nabla f_i(x)\rangle>0$, for all $u\in S^{d-1}$. If $0\in\bd(\conv(\{\nabla f_i(x)\}_{i\in [m]}))$, then the nonzero vector $v$ pointing outward of $\conv(\{\nabla f_i(x)\}_{i\in [m]})$ and orthogonal to the hyperface containing $0$ will make $\max_{i\in [m]} \langle v,\nabla f_i(x)\rangle = 0$, a contradiction. If $0\notin\conv(\{\nabla f_i(x)\}_{i\in [m]})$, then taking $v = -\argmin_{z\in\conv(\{\nabla f_i(x)\}_{i\in [m]})} \langle z, z\rangle$ will make $\max_{i\in [m]} \langle v,\nabla f_i(x)\rangle <0$, also a contradiction. Therefore, $0\in\interior(\conv(\{\nabla f_i(x)\}_{i\in [m]}))$ and hence $\cone(\{\nabla f_i(x)\}_{i\in [m]}) = \R^d$. For the other direction, let $x\in K$ satisfy $\cone(\{\nabla f_i(x)\}_{i\in [m]}) = \R^d$. To prove $\rho(x)>0$, since $u\mapsto \max_{i\in [m]} \langle u,\nabla f_i(x)\rangle$ is continuous and $S^{d-1}$ is compact, it suffices to prove, for all $u\in S^{d-1}$, $\max_{i\in [m]} \langle u, \nabla f_i(x)\rangle >0$. Given $u\in S^{d-1}$, since $\cone(\{\nabla f_i(x)\}_{i\in [m]}) = \R^d$, $u = \sum_{i\in [m]} r_i\cdot\nabla f_i(x)$, for some $r_i\geq 0$. If $\langle u,\nabla f_i(x)\rangle\leq 0$ for all $i\in [m]$, then $\langle u,u\rangle = \sum_{i\in [m]} r_i\langle u,\nabla f_i(x)\rangle\leq 0$, a contradiction. Thus, the other direction is proved and, for $x\in K$, $\rho(x)>0$ if and only if $\cone(\{\nabla f_i(x)\}_{i\in [m]}) = \R^d$. 

Now let $x_0\in K$ such that $\cone(\{\nabla f_i(x_0)\}_{i\in [m]})=\R^d$. By what has been claimed, this is equivalent to $\rho(x_0)>0$. We want to prove that there exists $\epsilon>0$ such that for all $x\in B(x_0,\epsilon)$, $\cone(\{\nabla f_i(x)\}_{i\in [m]})=\R^d$, or equivalently, $\rho(x)>0$. Suppose not, then there exists a sequence $(x_n, u_n)\in K\times S^{d-1}$ such that $x_n\to x_0$ and $h(x_n, u_n)\leq 0$ for all $n$. By compactness of $S^{d-1}$, there is a subsequence $u_{n_j}\to u_0$ and thus by continuity of $h$, $h(x_0, u_0)\leq 0$. However, $h(x_0, u_0)\geq\min_{u\in S^{d-1}} h(x_0,u) = \rho(x_0)>0$, a contradiction. Thus the proof is complete. 
\end{proof}

\subsection{Proof of Theorem \ref{thm_suff_cond_test}}\label{sec:append_proof_suff_con}
Throughout this subsection, $\Cent_0$ is as defined in Definition \ref{def:Cent_0}, $\widehat{\Cent}_0(M_n)$ is as defined in Definition \ref{def_est_cent_1}, and $\hat{Z}_n$ and $Z_\infty$ are as defined in Definition \ref{def:Z_infty_Z_n}. The following two functions play a crucial role throughout the proof of Theorem \ref{thm_suff_cond_test}. 
\begin{definition}\label{def:tau_hat_tau}
Let $X_n=\{x_1, ..., x_n\}$ be sampled from a regular pair $\RegPair$ and $M_n\in\mathcal{M}_{m,n}^o$ be the correspondin data matrix. Define $\tau:K\to [0,1]^m$ and $\hat{\tau}_n:X_n\to [0,1]^m$ by 
\begin{align*}
\tau(x) & \od (T_1(x), ..., T_m(x))\\
\hat{\tau}_n(x_a) & \od \left(\frac{\ord_1(M_n,a)-1}{n}, \cdots, \frac{\ord_m(M_n,a)-1}{n}\right).
\end{align*}
\end{definition}

In order to prove Theorem \ref{thm_suff_cond_test}, we first prove the following lemmas (Lemma \ref{lem:claim0} - Lemma \ref{lem:claim2}). 

\begin{lemma}\label{lem:claim0}
Let $\tau:K\to [0,1]^m$ and $\hat{\tau}_n:X_n\to [0,1]^m$ be defined as in Definition \ref{def:tau_hat_tau}. Then 
\begin{itemize}
\item[(i)] $\tau^{-1}(Z_\infty)=\Cent_0$,
\item[(ii)] $\hat{\tau}_n^{-1}(\hat{Z}_n) = \left \{x_a\in X_n:a\in \widehat{\Cent}_0\left (M_n\right)\right\}$, and
\item[(iii)] $\tau^{-1}(Z_\infty)\cap X_n\subseteq \hat{\tau}_n^{-1}(\hat{Z}_n)$.
\end{itemize}
\end{lemma}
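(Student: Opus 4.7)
The plan is to reduce all three claims to the single algebraic identity $\lambda_i(T_i(x)) = f_i(x)$ and then unfold the relevant definitions. This identity is immediate from Lemma \ref{lem:ell_t}, which characterizes $\lambda_i$ as the inverse of $\ell \mapsto P_K(f_i^{-1}(-\infty, \ell))$, while $T_i(x)$ is defined as the image of $f_i(x)$ under that same map; so apart from routine bookkeeping, Lemma \ref{lem:ell_t} supplies essentially all of the content.

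For part (i), by Definition \ref{def:Z_infty_Z_n}, $\tau(x) \in Z_\infty$ means $\bigcap_{i \in [m]} K^{(i)}(T_i(x)) = \varnothing$. Substituting $\lambda_i(T_i(x)) = f_i(x)$ into $K^{(i)}(t) = f_i^{-1}(-\infty, \lambda_i(t))$ gives $K^{(i)}(T_i(x)) = f_i^{-1}(-\infty, f_i(x))$, so the above condition is exactly the defining condition of $\Cent_0$ in Definition \ref{def:Cent_0}. For part (ii), Definition \ref{def_K_n^(i)(t)} yields $K_n^{(i)}\big((\ord_i(M_n,a){-}1)/n\big) = f_i^{-1}(-\infty, f_i(x_a))$, so that $X_n \cap K_n^{(i)}\big((\ord_i(M_n,a){-}1)/n\big) = \{x_b \in X_n : f_i(x_b) < f_i(x_a)\}$, which under the identification $x_b \leftrightarrow b$ is precisely $\{b \in [n] : M_{ib} < M_{ia}\}$ (using that $M_n$ has distinct entries in each row). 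The condition $\hat{\tau}_n(x_a) \in \hat{Z}_n$ is that the intersection of these sets over $i$ is empty, which is exactly the condition $a \in \widehat{\Cent}_0(M_n)$. Part (iii) is an immediate corollary of (i) and (ii): if $x_a \in \tau^{-1}(Z_\infty) \cap X_n$, then by (i), $\bigcap_{i} f_i^{-1}(-\infty, f_i(x_a)) = \varnothing$; intersecting with $X_n$ preserves emptiness, and then (ii) places $x_a$ inside $\hat{\tau}_n^{-1}(\hat{Z}_n)$.

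The only mild subtlety is the degenerate case $T_i(x) \in \{0,1\}$ where $\lambda_i$ is not defined by Lemma \ref{lem:ell_t}; I would dispose of it using the conventions $K^{(i)}(0) = \varnothing$ and $K^{(i)}(1) = K$ together with the equivalence of $P_K$ and Lebesgue measure (so that any open convex set of $P_K$-measure zero is empty), which makes both sides of each claimed equality behave compatibly at the endpoints. Since this lemma is essentially a translation between the measure-theoretic objects $(\tau, Z_\infty, \Cent_0)$ and their finite-sample counterparts $(\hat\tau_n, \hat Z_n, \widehat{\Cent}_0(M_n))$, there is no genuine obstacle beyond keeping the definitions carefully aligned.
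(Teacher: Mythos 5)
Your proof is correct and takes essentially the same route as the paper: unfold $K^{(i)}(T_i(x)) = f_i^{-1}(-\infty, f_i(x))$ (resp.\ $K_n^{(i)}((\ord_i(M_n,a)-1)/n) = f_i^{-1}(-\infty,f_i(x_a))$) and match the resulting emptiness conditions against the definitions of $\Cent_0$ and $\widehat{\Cent}_0(M_n)$; the only cosmetic difference is that the paper proves (iii) by directly evaluating $\hat R_n(\hat\tau_n(x_a))$ via Lemma \ref{lem_hat_R_n_expression}, while you deduce (iii) from (i) and (ii). One small caution on your endpoint remark: at $T_i(x)=1$ the identity $K^{(i)}(T_i(x))=f_i^{-1}(-\infty,f_i(x))$ can genuinely fail (left side is $K$, right side is a full-measure proper open subset), and ``open convex measure-zero sets are empty'' addresses only $T_i(x)=0$; the correct repair is that removing a full-measure open set from an open intersection changes it by a $P_K$-null set, so emptiness of the two intersections is still equivalent --- the paper's proof silently elides this same point.
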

\begin{proof}
To prove (i), 
\begin{align*}
\tau^{-1}(Z_\infty) & = \left\{x\in K: \tau(x)\in Z_\infty\right\} = \left\{x\in K: P_K\left(\bigcap_{i\in [m]} K^{(i)}(T_i(x))\right)= 0\right\}\\
 & =\left\{x\in K: \bigcap_{i\in [m]} K^{(i)}(T_i(x)) = \varnothing\right\}=\left\{x\in K: \bigcap_{i\in [m]} f_i^{-1}(-\infty,f_i(x)) = \varnothing\right\}=\Cent_0.
\end{align*}

To prove (ii), 
\begin{align*}
\hat{\tau}_n^{-1}(\hat{Z}_n) & = \{x_a\in X_n:\hat{\tau}_n(x_a)\in \hat{Z}_n\}\\
 & =
\left\{x_a\in X_n: \bigcap_{i\in [m]}\{b\in [n]:\ord_i(M_n,b)\leq\ord_i(M_n,a)-1\} = \varnothing \right\}\\
 & =
\left\{x_a\in X_n: a\in [n], \bigcap_{i\in [m]} \{b\in [n]:M_{ib}<M_{ia}\}=\varnothing\right\} \\
 & = \{x_a\in X_n:a\in \widehat{\Cent}_0(M_n)\}.
\end{align*}
To prove (iii), assume $x_a\in \tau^{-1}(Z_\infty)\cap X_n$. Then $\bigcap_{i\in [m]} f_i^{-1}(-\infty,f_i(x_a)) = \varnothing$. Thus
\begin{align*}
\hat{R}_n(\hat{i}_n(x_a)) & = \hat{R}_n\left(\frac{\ord_1(M_n,a)-1}{n}, ..., \frac{\ord_m(M_n,a)-1}{n}\right)\\
 & = \frac{1}{n}\cdot\#\left(X_n\cap\left(\bigcap_{i\in [m]} K_n^{(i)}\left(\frac{\ord_i(M_n,a)-1}{n}\right)\right)\right) & & \text{(by Lemma \ref{lem_hat_R_n_expression})}\\
 & = \frac{1}{n}\cdot\#\left(X_n\cap\left(\bigcap_{i\in [m]} f_i^{-1}(-\infty,f_i(x_a))\right)\right) & & \text{(by Definition \ref{def_K_n^(i)(t)})}\\
 & =0. 
\end{align*}
Hence, $x_a\in\hat{\tau}_n^{-1}(\hat{Z}_n)$ and the inclusion in (iii) follows. 
\end{proof}

\begin{lemma}\label{lem:claim1}
Let $\tau:K\to [0,1]^m$ and $\hat{\tau}_n:X_n\to [0,1]^m$ be defined as in Definition \ref{def:tau_hat_tau}. Then, for any $\delta>0$, as $n\to\infty$, w.h.p., $\hat{\tau}_n^{-1}(\hat{Z}_n)\subseteq \tau^{-1}(Z_\infty+\delta)$.\footnote{See Equation (\ref{eq:Z_plus_epsilon}) for the definition of $Z_\infty+\delta$.}
\end{lemma}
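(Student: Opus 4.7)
\medskip
\noindent\textbf{Proof plan for Lemma \ref{lem:claim1}.}
The plan is to combine two uniform approximations: (a) that $\hat{Z}_n$ lies inside $Z_\infty+\epsilon$ with high probability (already available from the machinery of Section~6.3), and (b) that the discrete rank vector $\hat{\tau}_n(x_a)$ uniformly approximates the continuous coordinate $\tau(x_a)$ on the sample. Fix $\delta>0$ and set $\epsilon=\delta/2$. First I would apply Corollary~\ref{cor_Zero_Interleave} to conclude that, w.h.p.\ as $n\to\infty$,
\[
\hat{Z}_n\subseteq Z_\infty+\epsilon.
\]
Second, I would invoke the standard (one-dimensional) Glivenko--Cantelli theorem applied to each $f_i$: since the regular pair condition (R2) forces $P_K(f_i^{-1}(\ell))=0$ for all $\ell$, the empirical CDF of $\{f_i(x_b)\}_{b\in[n]}$ converges uniformly to the true CDF, so
\[
\sup_{a\in[n]}\max_{i\in[m]}\left|\frac{\ord_i(M_n,a)}{n}-T_i(x_a)\right|\longrightarrow 0
\]
in probability; since $1/n\to 0$, the same bound (up to an additive $1/n$) holds with $\ord_i(M_n,a)-1$ in the numerator, which is exactly the definition of $\hat{\tau}_n(x_a)$ from Definition~\ref{def:tau_hat_tau}. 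Thus, for $n$ large, w.h.p.\
\[
\max_{a\in[n]}\max_{i\in[m]}\bigl|\hat{\tau}_n(x_a)_i-\tau(x_a)_i\bigr|\leq\epsilon.
\]

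\medskip
Third, I would combine these two facts. Take any $x_a\in\hat{\tau}_n^{-1}(\hat{Z}_n)$. By (a), there exists $(t_1,\dots,t_m)\in Z_\infty$ and $0\leq\epsilon_i\leq\epsilon$ with $\hat{\tau}_n(x_a)_i=t_i+\epsilon_i$. By (b), there exist $\eta_i$ with $|\eta_i|\leq\epsilon$ and $\tau(x_a)_i=\hat{\tau}_n(x_a)_i+\eta_i=t_i+(\epsilon_i+\eta_i)$.

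\medskip
\noindent The only subtlety I anticipate is that the shift $\epsilon_i+\eta_i$ need not be nonnegative in every coordinate, so $(t_1,\dots,t_m)$ does not immediately witness $\tau(x_a)\in Z_\infty+\delta$. This is handled by exploiting downward closure of $Z_\infty$ (observed just after Definition~\ref{def:Z_infty_Z_n}, since $R_\infty$ is monotone): set
\[
t'_i\od t_i+\min(0,\epsilon_i+\eta_i)\leq t_i,
\]
so $(t'_1,\dots,t'_m)\in Z_\infty$, and $\tau(x_a)_i=t'_i+\max(0,\epsilon_i+\eta_i)$ with $0\leq\max(0,\epsilon_i+\eta_i)\leq 2\epsilon=\delta$. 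This exhibits $\tau(x_a)\in Z_\infty+\delta$, i.e.\ $x_a\in\tau^{-1}(Z_\infty+\delta)$, completing the proof. The main (minor) obstacle is precisely this sign bookkeeping; everything else is a direct citation of already-established uniform limits.
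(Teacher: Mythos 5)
Your proof is correct and follows essentially the same path as the paper's: apply Corollary~\ref{cor_Zero_Interleave} to get $\hat{Z}_n\subseteq Z_\infty+\delta/2$ w.h.p., apply the uniform Glivenko--Cantelli estimate (Equation~(\ref{eq:ord_i_appx_T_i})) to make $\sup_{a,i}\lvert\hat{\tau}_n(x_a)_i-\tau(x_a)_i\rvert$ small, and then exploit the downward closure of $Z_\infty$ to turn the two-sided approximation error into a nonnegative shift witnessing membership in $Z_\infty+\delta$. The paper's write-up packages that last step through an auxiliary Hausdorff-distance quantity $\mu_*$, which in the end is used only as a positive constant smaller than $\delta/2$; your explicit coordinate-wise $\min(0,\cdot)$/$\max(0,\cdot)$ bookkeeping achieves the same thing more transparently.
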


\begin{proof}
By Corollary \ref{cor_Zero_Interleave}, w.h.p., 
\begin{equation}\label{emp-zero-approx}
\hat{Z}_n\subseteq Z_\infty+\delta/2.
\end{equation}
If $Z_\infty = [0,1]^m$, then $\tau^{-1}(Z_\infty+\delta) = K$ and hence $\hat{\tau}_n^{-1}(\hat{Z}_n)\subseteq \tau^{-1}(Z_\infty+\delta)$ clearly holds. Now suppose $Z_\infty\neq [0,1]^m$. Since $Z_\infty$ is closed, $int([0,1]^m\setminus Z_\infty)$ is nonempty and open, say containing $x_0$. Choose $\epsilon>0$ such that $\epsilon<\delta$ and $B(x_0,\sqrt{d}\epsilon)\subseteq int([0,1]^m\setminus Z_\infty)$. Then $Z_\infty+\epsilon/2\subsetneqq Z_\infty+\epsilon$. Thus $\mu_* \od d_H(Z_\infty+\epsilon/2, Z_\infty+\epsilon)/\sqrt{d}$ is a positive number, where $d_H$ is the Hausdorff distance.\footnote{See Equation (\ref{eq:Hausdordd_distance}) for the definition of Hausdorff distance.} Since $Z_\infty+\epsilon/2+\epsilon/2 = Z_\infty+\epsilon$, we have the inequality
\begin{equation}\label{Haus-diff}
\mu_*\leq\sqrt{d\cdot(\epsilon/2)^2}/\sqrt{d} = (\epsilon/2)<\delta/2.
\end{equation}
By Equation (\ref{eq:ord_i_appx_T_i}) and definition of $\tau$ and $\hat{\tau}_n$, w.h.p., 
\begin{equation}\label{unif-approx}
\sup_{x\in X_n}\lVert \tau(x)-\hat{\tau}_n(x)\rVert_2 <\mu_*/2.
\end{equation}
There is one more property of $Z_\infty$, following from monotonicity of $R_\infty$, that we need in this proof: if $(t_1, ..., t_m)\in Z_\infty$ and $(t_1', ..., t_m')\leq (t_1, ..., t_m)$, then $(t_1', ..., t_m')\in Z_\infty$; in short, {\it $Z_\infty$ is closed under $\leq$}. 

Now we can prove the inclusion. Assume $x\in \hat{\tau}_n^{-1}(\hat{Z}_n)$, namely, $\hat{\tau}_n(x)\in\hat{Z}_n$. Then
\[
\begin{matrix*}[l]
\tau(x)&\in\hat{\tau}_n(x)+\overline{B(0,\mu_*/2})& \text{(by (\ref{unif-approx}))} \\
	&\subseteq \hat{Z}_n+\overline{B(0,\mu_*/2})& \text{(since $\hat{\tau}_n(x)\in\hat{Z}_n$)} \\
    &\subseteq Z_\infty+\delta/2+\overline{B(0,\mu_*/2})& \text{(by (\ref{emp-zero-approx}))}\\
    &\subseteq Z_\infty+\delta/2+\mu_*/2& \text{(since $Z_\infty$ is closed under $\leq$)}\\
    &\subseteq Z_\infty+\delta/2+\delta/4& \text{(by (\ref{Haus-diff}))}\\
    &\subseteq Z_\infty+\delta.& 
\end{matrix*}
\]
Therefore, $x\in \tau^{-1}(Z_\infty+\delta)$. Since $x\in \hat{\tau}_n^{-1}(\hat{Z}_n)$ is arbitrary, the proof is complete.
\end{proof}

\begin{lemma}\label{lem:claim2}
Let $X_n\subset K$ be a point cloud of size $n$, sampled from a regular pair $\RegPair$. Let $\tau$ be defined as in Definition \ref{def:tau_hat_tau}. Then, for all $\epsilon>0$, there exists $\delta_0>0$ such that, as $n\to\infty$, w.h.p.,
\[
\left|\frac{\#(X_n\cap \tau^{-1}(Z_\infty+\delta_0))}{n}-P_K(\Cent_0)\right|<\epsilon/2.
\]
\end{lemma}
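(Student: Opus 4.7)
The plan is to combine Lemma \ref{lem:claim0}(i), which identifies $\Cent_0 = \tau^{-1}(Z_\infty)$, with continuity of the probability measure $P_K$ and the (weak) law of large numbers applied to i.i.d. Bernoulli indicators. First I would choose a deterministic $\delta_0$ making $P_K(\tau^{-1}(Z_\infty+\delta_0))$ close to $P_K(\Cent_0)$; then I would show that the empirical frequency of $X_n$ in $\tau^{-1}(Z_\infty+\delta_0)$ concentrates around this probability.

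For the deterministic step, observe that $R_\infty$ is continuous (Lemma \ref{lem_R_infty_unif_cont}), so $Z_\infty = R_\infty^{-1}(\{0\})$ is closed in the compact set $[0,1]^m$. Because the shifts in the definition of $Z_\infty+\delta$ (see \eqref{eq:Z_plus_epsilon}) are restricted to $[0,\delta]^m$, any $s\in\bigcap_{\delta>0}(Z_\infty+\delta)$ admits, for each $\delta>0$, a point $t^{(\delta)}\in Z_\infty$ with $s-t^{(\delta)}\in[0,\delta]^m$; letting $\delta\to 0^+$ and using closedness of $Z_\infty$ forces $s\in Z_\infty$. Hence $\bigcap_{\delta>0}(Z_\infty+\delta)=Z_\infty$. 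Since preimages commute with intersections, the decreasing family $\tau^{-1}(Z_\infty+\delta)$ satisfies $\bigcap_{\delta>0}\tau^{-1}(Z_\infty+\delta)=\tau^{-1}(Z_\infty)=\Cent_0$. Continuity of $P_K$ from above then gives $P_K(\tau^{-1}(Z_\infty+\delta))\searrow P_K(\Cent_0)$ as $\delta\to 0^+$, so I can fix $\delta_0>0$ with
\[
0\le P_K(\tau^{-1}(Z_\infty+\delta_0))-P_K(\Cent_0)<\epsilon/4.
\]

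For the probabilistic step, the points $x_1,\dots,x_n$ of $X_n$ are i.i.d. from $P_K$, so the indicators $\mathbf{1}_{\tau^{-1}(Z_\infty+\delta_0)}(x_a)$ are i.i.d. Bernoulli with parameter $p\od P_K(\tau^{-1}(Z_\infty+\delta_0))$. A direct application of Chebyshev's inequality (as in the proof of Theorem \ref{thm_1st_asym}) yields
\[
\Pr\!\left[\left|\tfrac{1}{n}\#(X_n\cap\tau^{-1}(Z_\infty+\delta_0))-p\right|\ge \epsilon/4\right]\le \frac{4^2 p(1-p)}{\epsilon^2 n}\xrightarrow[n\to\infty]{}0.
\]
Combining with the previous step by the triangle inequality gives the claimed bound by $\epsilon/2$.

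The only delicate point is the set-theoretic identity $\bigcap_{\delta>0}(Z_\infty+\delta)=Z_\infty$: since $Z_\infty+\delta$ is defined with one-sided (nonnegative) shifts rather than full $\epsilon$-balls, one must invoke both closedness of $Z_\infty$ in $[0,1]^m$ and compactness to extract a convergent subsequence of the approximants $t^{(\delta)}$. Once this is established, the rest reduces to standard continuity-of-measure and LLN arguments and is routine.
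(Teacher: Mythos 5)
Your proof is correct and follows essentially the same route as the paper's: choose $\delta_0$ via continuity-from-above of $P_K$ applied to the decreasing family $\tau^{-1}(Z_\infty+\delta)$, then apply the weak law of large numbers (Chebyshev) to the i.i.d.\ indicators and conclude by the triangle inequality. The only difference is that you spell out the set-theoretic identity $\bigcap_{\delta>0}(Z_\infty+\delta)=Z_\infty$ more carefully than the paper's terse ``$Z_\infty+\delta\searrow Z_\infty$''; note, though, that the compactness/subsequence remark at the end is unnecessary, since $\lVert s-t^{(\delta)}\rVert_\infty\le\delta$ already forces $t^{(\delta)}\to s$ directly.
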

\begin{proof}
Let us first prove that $\lim_{\delta\searrow 0} P_K(\tau^{-1}(Z_\infty+\delta)) = P_K(\Cent_0)$. Since $Z_\infty$ is closed, $Z_\infty+\delta\searrow Z_\infty$ as $\delta\searrow 0$. Thus, $\tau^{-1}(Z_\infty+\delta)\searrow \tau^{-1}(Z_\infty)$. Therefore, by monotone convergence theorem, 
$\lim_{\delta\searrow 0} P_K(\tau^{-1}(Z_\infty+\delta))= P_K(\tau^{-1}(Z_\infty)) = P_K(\Cent_0)$.

Since $\lim_{\delta\searrow 0} P_K(\tau^{-1}(Z_\infty+\delta)) = P_K(\Cent_0)$, we may choose $\delta_0$ such that 
\begin{equation}\label{eq:claim2_part1}
|P_K(\tau^{-1}(Z_\infty+\delta_0))-P_K(\Cent_0)|<\epsilon/4.
\end{equation}
Note that $X_n\cap \tau^{-1}(Z_\infty+\delta)$ is an i.i.d. sample of $\tau^{-1}(Z_\infty+\delta)\subseteq K$. Thus, by law of large numbers, as $n\to\infty$, w.h.p., 
\begin{equation}\label{eq:claim2_part2}
\left|\frac{\#(X_n\cap \tau^{-1}(Z_\infty+\delta_0))}{n}-P_K(\tau^{-1}(Z_\infty+\delta_0))\right|<\epsilon/4.
\end{equation}
Combining Equation (\ref{eq:claim2_part1}) and (\ref{eq:claim2_part2}), the result follows. 
\end{proof}

With the help of previous lemmas, we give a proof of Theorem \ref{thm_suff_cond_test}.

\begin{proof}[Proof of Theorem \ref{thm_suff_cond_test}]
Let $\tau$ and $\hat{\tau}_n$ be defined as in Definition \ref{def:tau_hat_tau}. Let $\epsilon>0$. Note that $X_n\cap \tau^{-1}(Z_\infty)$ is an i.i.d. sample of $\tau^{-1}(Z_\infty)=\Cent_0\subseteq K$. Thus, by law of large numbers, as $n\to\infty$, w.h.p., $\left|\#(X_n\cap \tau^{-1}(Z_\infty))/n-P_K(\Cent_0)\right|<\epsilon/2$. Therefore, as $n\to\infty$, w.h.p.,
\[
\begin{matrix*}[l]
P_K(\Cent_0)-\epsilon/2 & <\#(X_n\cap \tau^{-1}(Z_\infty))/n & \\
              &\leq\#(\hat{\tau}_n^{-1}(\hat{Z}_n))/n & & \text{(by (iii) of Lemma \ref{lem:claim0})}\\
              &=\#(\widehat{\Cent}_0(M_n))/n & & \text{(by Lemma \ref{lem:claim0})}
\end{matrix*}
\]
On the other hand, as $n\to\infty$, w.h.p., 
\[
\begin{matrix*}[l]
\frac{\#(\widehat{\Cent}_0(M_n))}{n} & = \#(\hat{\tau}_n^{-1}(\hat{Z}_n))/n & & \text{(by Lemma \ref{lem:claim0})}\\
			& = \#(\hat{\tau}_n^{-1}(\hat{Z}_n)\cap X_n)/n & & \text{(since $\hat{\tau}_n^{-1}(\hat{Z}_n)\subseteq X_n$)}\\
            & \leq  \#(\tau^{-1}(Z_\infty+\delta_0)\cap X_n)/n & & \text{(by Lemma \ref{lem:claim1})}\\
            & \leq P_K(\Cent_0)+\epsilon/2& & \text{(by Lemma \ref{lem:claim2})}.  
\end{matrix*}
\]
Therefore, as $n\to\infty$, w.h.p., $\left|\frac{\#(\widehat{\Cent}_0(M_n))}{n}-P_K(\Cent_0)\right|\leq\epsilon/2$. 

\end{proof}

\section*{Acknowledgments}
\noindent This work was  supported by the NSF IOS-155925   grant.  


\bibliography{mybib}
\bibliographystyle{amsplain}

\end{spacing}
\end{document}